\theoremstyle{plain}
\theoremstyle{plain}
\newtheorem{theorem}{Theorem}[section]
\newtheorem{proposition}[theorem]{Proposition}
\newtheorem{lemma}[theorem]{Lemma}
\newtheorem{conjecture}[theorem]{Conjecture}
	\renewcommand{\vec}[1]{\mathbf{#1}}
\theoremstyle{remark}
\newtheorem{remark}[equation]{Remark}
\theoremstyle{definition}
\newtheorem{definition}[theorem]{Definition}
\let\lim=\relax
\DeclareMathOperator*{\lim}{lim}
\newcommand{\ord}{\textup{ord}}
\newcommand{\diag}{\textup{diag}}
\newcommand{\tr}{\textup{tr}}
\renewcommand{\hat}{\widehat}
\newcommand{\Ki}{K_\infty}
\newcommand{\ki}{K_\infty}
\newcommand{\hQ}{\widehat{Q}}
\newcommand{\hR}{\widehat{R}}
\newcommand{\hl}{\widehat l}
\newcommand{\Kl}{\textup{Kl}}
\newcommand{\FF}{\mathbb{F}}
\newcommand{\ZZ}{\mathbb{Z}}
\newcommand{\CC}{\mathbb{C}}
\newcommand{\TT}{\mathbb{T}}
\newcommand{\PGL}{\textup{PGL}}
\newcommand{\PSL}{\textup{PSL}}
\newcommand{\sumstar}{\sideset{}{^*}\sum}
\newcommand{\cO}{\mathcal{O}}
\newcommand{\cG}{\mathcal{G}}
\newcommand{\vol}{\textup{vol}}
\renewcommand{\contentsname}{}
\begin{document}
\title{Ramanujan graphs and exponential sums over function fields}
\author{Naser T. Sardari and Masoud Zargar}

\address{Institute
For Advanced Study,
1 Einstein Drive,
Princeton, New Jersey
08540 USA}
\email{ntalebiz@ias.edu}
\address{Fakult\"at f\"ur Mathematik, Universit\"at Regensburg, Universit\"atsstr.  31, 93040 Regensburg, Germany}
\email{masoud.zargar@ur.de}

\renewcommand{\contentsname}{}

\begin{abstract}
We prove that $q+1$-regular Morgenstern Ramanujan graphs $X^{q,g}$ (depending on $g\in\mathbb{F}_q[t]$) have diameter at most $\left(\frac{4}{3}+\varepsilon\right)\log_{q}|X^{q,g}|+O_{\varepsilon}(1)$ (at least for odd $q$ and irreducible $g$) provided that a twisted Linnik\textendash Selberg conjecture over $\mathbb{F}_q(t)$ is true. This would break the 30 year-old upper bound of $2\log_{q}|X^{q,g}|+O(1)$, a consequence of a well-known upper bound on the diameter of regular Ramanujan graphs proved by Lubotzky, Phillips, and Sarnak using the Ramanujan bound on Fourier coefficients of modular forms. We also unconditionally construct infinite families of Ramanujan graphs that prove that $\frac{4}{3}$ cannot be improved.
\footnotetext{\date\today}
\end{abstract}
\maketitle
\setcounter{tocdepth}{1}
\tableofcontents
\section{Introduction}\label{intro}
We begin by defining  Ramanujan graphs. Suppose $k\geq 3$ is a fixed integer, and let $G$ be a $k$-regular connected graph with adjacency matrix $A_G$. Since $A_G$ is symmetric, all its eigenvalues are real. Furthermore, it is easy to see that $k$ is the largest eigenvalue of the adjacency matrix $A_G$. Letting $\lambda_G$ be the second largest eigenvalue, it is a theorem of Alon\textendash Boppana~\cite{Alon} that $\lambda_G\geq 2\sqrt{k-1}+o(1)$, where $o(1)$ goes to zero as $|G|\rightarrow\infty$. We say that $G$ is a Ramanujan graph if $\lambda_G \leq 2\sqrt{k-1}$. The natural question that arises is if we can construct an infinite sequence of such Ramanujan graphs. Such graphs have been constructed by Lubotzky\textendash Pillips\textendash Sarnak~\cite{Lubotzky1988}, Margulis~\cite{Margulis}, Morgenstern~\cite{Morgenstern}, and others. Though the first two constructions are the same and are $p+1$-regular, $p$ a prime, those of Morgenstern are $q+1$-regular, $q$ any prime power. Recently, Marcus\textendash Spielman\textendash Srivastava have proved the existence of $d$-regular bipartite Ramanujan graphs for arbitrary $d$~\cite{MSS}; Cohen has shown how to construct such $d$-regular graphs in polynomial time~\cite{MCohen}. The study of such graphs is intimately connected to deep questions in number theory, and is also of interest to computer scientists.\\

Lubotzky\textendash Phillips\textendash Sarnark~\cite{Lubotzky1988}, and independently Margulis~\cite{Margulis}, constructed the first examples of Ramanujan graphs; they are Cayley graphs of $\PGL_2(\mathbb{Z}/N\mathbb{Z})$ or $\PSL_2(\mathbb{Z}/N\mathbb{Z})$ with $p+1$ explicit generators, for every prime $p$ and natural number $N$. We denote them by $X^{p,N}.$ The fact that $X^{p,N}$ is a Ramanujan graph follows from the Ramanujan bound on the $p$-th Fourier coefficients of the weight 2 holomorphic modular forms of level $N$, hence the naming of \textit{Ramanujan} graphs. This was proved by Lubotzky, Phillips, and Sarnak in ~\cite{Lubotzky1988}. Another result of Lubotzky\textendash Phillips\textendash Sarnak is that the diameter of every $k$-regular Ramanujan graph $G$ is bounded from above by $2\log_{k-1}|G|+O(1)$. As of today, this is still the best known upper bound on the diameter of a Ramanujan graph. Though it was conjectured by Sarnak that the diameter is bounded from above by $(1+\varepsilon)\log_{k-1}|G|$ as $|G|\to \infty$ (see \cite[Chapter 3]{Peter}), the first author proved that for some infinite families of LPS Ramanujan graphs $X^{p,N}$ the diameter is bigger than $4/3\log_{p}|X^{p,N}|+O(1)$ (see \cite{Sardari2018}).\\
\\
Let $q$ be a prime power and $\mathbb{F}_q$ be the finite field with $q$ elements.   Morgenstern also constructed Ramanujan graphs $X^{q,g}$  by considering a suitable quaternion algebra over $\mathbb{F}_q[t]$, where  $g\in \mathbb{F}_q[t]$ and $\gcd(g,t(t-1))=1$~\cite{Morgenstern}. For a discussion of the connection to strong approximation, see the introduction to the authors' paper \cite{SZ2019}. The graph-theoretic conjecture with which this paper is concerned is the following.
\begin{conjecture}\label{conj:graphs}The diameter of $q+1$-regular Morgenstern Ramanujan graphs $X^{q,g}$ is bounded from above by
\[\left(\frac{4}{3}+\varepsilon\right)\log_{q}|X^{q,g}|+O_{\varepsilon}(1),\]
at least when $q$ is odd and $g$ is irreducible.
\end{conjecture}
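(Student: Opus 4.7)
The plan is to follow the arithmetic/spectral strategy that yields the LPS bound, but to replace pointwise Ramanujan/Weil inputs by the sharper cancellation encoded in the assumed twisted Linnik--Selberg conjecture over $\FF_q(t)$, in the same spirit as the characteristic-zero work of the first author \cite{Sardari2018} on the companion lower bound.

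First I would translate the diameter question into a counting problem. By Morgenstern's construction, $X^{q,g}$ is the double-coset quotient of the $(q+1)$-regular Bruhat--Tits tree of $\PGL_2(\FF_q((1/t)))$ by an arithmetic lattice $\Gamma$ coming from the units of an $\FF_q[t]$-order $\cO$ in a quaternion algebra $B/\FF_q(t)$ ramified at $t$ and at the place determined by $g$. Under the hypotheses (odd $q$, irreducible $g$) strong approximation for $B^\times$ identifies the graph distance from the identity vertex to the vertex represented by $\gamma$ with the smallest $N$ such that some $\alpha \in \cO$ of reduced norm an $\FF_q^\times$-multiple of $g^N$ represents $\gamma$ modulo $\Gamma$. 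Setting
\[r_N(\gamma) := \#\{\alpha \in \cO : \mathrm{nrd}(\alpha) \in \FF_q^{\times} g^N,\ [\alpha]=\gamma\},\]
the conjecture reduces to showing $r_N(\gamma) \geq 1$ for every $\gamma$ whenever $N \geq (4/3+\varepsilon)\log_q |X^{q,g}| + O_\varepsilon(1)$.

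Next I would expand $r_N(\gamma)$ via a Petersson--Kuznetsov formula for automorphic forms on $\PGL_2$ over $\FF_q(t)$, with a test function isolating reduced norm $g^N$ inside the chosen double coset. The trivial-representation contribution is the main term, of size $\sim q^N/|X^{q,g}|$; the remainder unfolds into a weighted sum over moduli $c \in \FF_q[t]$ of Kloosterman sums $\Kl(\,\cdot\,,\,\cdot\,;c)$ whose twists depend on $\gamma$. Substituting the Weil individual bound $q^{\deg(c)/2}$ for each Kloosterman sum reproduces the LPS threshold $q^N \gg |X^{q,g}|^{2+\varepsilon}$. To break this, I would feed the sum over $c$ into the twisted Linnik--Selberg conjecture, which provides square-root cancellation among Kloosterman sums over moduli of bounded degree. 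A direct computation then shows that the total Kloosterman contribution is dominated by the main term once $q^N > |X^{q,g}|^{4/3+\varepsilon}$, forcing $r_N(\gamma) \geq 1$ in that range.

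Beyond the twisted Linnik--Selberg conjecture itself, which is the essential hardness and the reason the theorem must be conditional, the main technical obstacles are (i) setting up the function-field Petersson formula with the precise test function that extracts quaternions of reduced norm in $\FF_q^\times g^N$ inside a fixed double coset, carefully computing the local factors at $t$ and at the place dividing $g$ (this is where the restrictions that $q$ be odd and $g$ irreducible enter, through strong approximation and the structure of the relevant local orders), and (ii) verifying that the weights, moduli ranges, and character twists that arise match the hypotheses of the twisted Linnik--Selberg conjecture as formulated in the paper, with error terms uniform in $g$, $N$, and $\gamma$.
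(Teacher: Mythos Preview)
Your proposal takes a genuinely different route from the paper. The paper does not use a Petersson--Kuznetsov/spectral expansion at all. Instead it proceeds via the Browning--Vishe delta method over $\FF_q(t)$: the counting function $N(w,\boldsymbol{\lambda})$ for solutions to $F(\vec{x})=f$, $\vec{x}\equiv\boldsymbol{\lambda}\bmod g$ (with $F$ the Morgenstern quadratic form) is written as a double sum $\sum_r\sum_{\vec{c}}|gr|^{-4}S_{g,r}(\vec{c})I_{g,r}(\vec{c})$ of exponential sums times oscillatory integrals. The heart of the argument is the explicit evaluation of $I_{g,r}(\vec{c})$ via the function-field stationary phase theorem of \cite{SZ2019} (producing Kloosterman sums $\Kl_\infty$ at the infinite place) and of $S_{g,r}(\vec{c})$ by direct Gauss-sum manipulation (producing Kloosterman sums $\Kl_r$ at the finite moduli). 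The error is then split into two pieces $E_1,E_2$, each reduced by hand to an instance of the twisted Linnik--Selberg bound. What the paper's approach buys is that the twists, moduli ranges, and the precise $\psi_{g^2}(\alpha r^{-1})$ factor are pinned down exactly by computation, so the form of the conjecture is dictated by the output rather than hoped to match an abstract spectral unfolding.

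Your spectral sketch is plausible in broad outline, but two points need correcting. First, the roles of $t$ and $g$ are reversed in your setup: in Morgenstern's construction the generators have reduced norm a unit times $t$ (the algebra is split at the place $t$, giving the $(q+1)$-regular tree, and ramified elsewhere), so paths of length $h$ correspond to norm $t^h$; it is the congruence modulo $g$ that encodes the target vertex. The relevant count is over $\alpha\in\cO$ with $\mathrm{nrd}(\alpha)\in\FF_q^\times t^N$ lying in a fixed class mod $g$, not reduced norm $g^N$. Second, and more substantively, the delicate issue the paper actually resolves is precisely your obstacle (ii): verifying that the Kloosterman contributions are exactly of the shape $\psi_{g^2}(\alpha r^{-1})\Kl_r(a,b)$ and $\psi_{g^2}(\alpha r^{-1})\Kl_r(a,b)\Kl_\infty(\psi,ab/r^2)$ with $a,b\in\FF_q[t,g^{-1}]$. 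In the delta-method route this is a long explicit calculation (Propositions~\ref{prop:oscmorgen} and~\ref{prop:phases}); in a Kuznetsov approach over $\PGL_2/\FF_q(t)$ with level structure at $g$ and at $t-1$, it is not obvious a priori that the same twists emerge, and that verification would be the real content of your argument rather than a side obstacle.
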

Let us assume from now on that the base field $\mathbb{F}_q$ is of odd characteristic. Consider the following system of equations
\begin{equation}\label{maineq}\begin{cases}F(\vec{x})=f,\\ \vec{x}\equiv\boldsymbol{\lambda}\bmod g,\end{cases}\end{equation}
where $F$ is a quadratic form in $4$ variables over $\mathbb{F}_q[t]$, $f,g\in\mathbb{F}_q[t]$, and $\boldsymbol{\lambda}\in\mathbb{F}_q[t]^4$. We say that all the local conditions for the system~\eqref{maineq} are satisfied if the system~\eqref{maineq} has solutions over $\mathbb{F}_q[t]/\langle gh\rangle$ for any nonzero $h\in \mathbb{F}_q[t],$ in addition to $F(\vec{x})=f$ having a solution over $\mathbb{F}_q(\!(1/t)\!)$. As discussed in the introduction to the authors' paper \cite{SZ2019}, the above conjecture would follow from the following very general conjecture regarding strong approximation for quadratic forms over $\mathbb{F}_q[t]$ in $4$ variables. Throughout this paper, the ideal $(a,b)$ of $\mathbb{F}_q[t]$ generated by polynomials $a,b\in\mathbb{F}_q[t]$ (which is a principal ideal domain) will be identified with its monic generator.
\begin{conjecture}\label{mainconj}
Let $F$ be a  quadratic form over $\mathbb{F}_q[t]$ in $4$ variables and of discriminant $\Delta\neq 0$. Let $f, g\in\mathbb{F}_q[t]$ be nonzero polynomials such that $(f\Delta,g)=1$, and let $\boldsymbol{\lambda}\in\mathbb{F}_q[t]^4$ be a quadruple of polynomials at least one of whose coordinates is relatively prime to $g$. Finally, suppose that 
all local conditions for the system~\eqref{maineq}
are satisfied.  There is a solution $\vec{x} \in \mathbb{F}_q[t]^4 $ to \eqref{maineq} if $\deg f \geq (4+\varepsilon)\deg g +O_{\varepsilon,F}(1)$.
\end{conjecture}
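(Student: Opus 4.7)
The plan is to apply the function-field delta-symbol circle method to the weighted counting function
\[
N_w(f,g,\boldsymbol{\lambda}) := \sum_{\substack{\vec{x} \in \mathbb{F}_q[t]^4 \\ \vec{x} \equiv \boldsymbol{\lambda} \bmod g}} w(\vec{x})\, \mathbf{1}_{F(\vec{x}) = f},
\]
where $w$ is a smooth archimedean weight on $\Ki^4$ concentrated on $|\vec{x}| \asymp q^{\lceil \deg f / 2 \rceil}$. Expanding the indicator by Fourier duality on $\Ki := \mathbb{F}_q(\!(1/t)\!)$ against the standard additive character $\psi$ trivial on $\mathbb{F}_q[t]$, and Farey-dissecting $\alpha = a/c + \beta$ with $c \in \mathbb{F}_q[t]$ monic, $(a,c)=1$, $|a| < |c|$, and $|\beta| < |c|^{-2}$, reduces the problem to controlling
\[
\sum_{c\text{ monic}} \sum_{\substack{a\bmod c \\ (a,c)=1}} \psi(-af/c) \int_{|\beta| < |c|^{-2}} \psi(-\beta f)\, \Sigma_c(a,\beta;g,\boldsymbol{\lambda})\, d\beta,
\]
with $\Sigma_c(a,\beta;g,\boldsymbol{\lambda})$ the inner sum over $\vec{x}\equiv\boldsymbol{\lambda}\bmod g$ of $w(\vec{x})\psi((a/c+\beta)F(\vec{x}))$.

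The next step is Poisson summation on $\vec{x}$ with combined modulus $cg$, splitting the arithmetic content via the Chinese Remainder Theorem into a piece modulo $c$ and a piece modulo $g$. The coprimality $(c,g)=1$ in the genuinely contributing terms follows from the hypothesis $(f\Delta,g)=1$. The piece modulo $c$ is a quaternary quadratic Gauss sum; completing the square, legitimate because $q$ is odd and $(c,\Delta)=1$, evaluates it to $|c|^2$ times a quadratic character, and the subsequent $a$-sum collapses to a classical Kloosterman (or Sali\'e) sum $\Kl(f, Q(\vec{v});c)$ in $\mathbb{F}_q[t]$, where $Q$ is the form dual to $F$. The piece modulo $g$ is an explicit $\vec{v}$-dependent twist by $\boldsymbol{\lambda}$ that translates into a character $\chi_g$ (or a congruence condition) on $c$.

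Isolating the frequency $\vec{v}=0$ and small $c$ produces the heuristic main term: the product of the archimedean singular integral, of order $|f|$ for a quaternary form, and the singular series, which is bounded below by $\gg |g|^{-3}$ under the local-solubility hypothesis. Hence the main term has order $\gg |f|/|g|^3$, which beats $1$ when $\deg f > 3\deg g$. The error comes from $\vec{v}\neq 0$ and moderate-to-large $c$. Using only Weil's individual bound $|\Kl(\cdot,\cdot;c)| \ll_\varepsilon |c|^{1/2+\varepsilon}$ and summing trivially in $c$ yields the threshold $\deg f \geq (6+\varepsilon)\deg g$, which is the incarnation, under this method, of the $2\log_q |X^{q,g}|$ Lubotzky\textendash Phillips\textendash Sarnak diameter bound. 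To reach $(4+\varepsilon)\deg g$, I would invoke the twisted Linnik\textendash Selberg conjecture over $\mathbb{F}_q(t)$ to extract an additional square-root of cancellation on average over $c$, with the twist being precisely the character $\chi_g$ produced by the mod-$g$ Gauss-sum factor.

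The main obstacle is this last step: recognizing the error-term $c$-sum, with the $\boldsymbol{\lambda}$- and $g$-dependence of the Gauss-sum twist kept explicit, as a sum of Kloosterman sums of the exact shape covered by the twisted Linnik\textendash Selberg conjecture, and then balancing the $\beta$-integral (via stationary phase or a second Poisson) and the $\vec{v}$-sum against this input so that the saving is not lost. The residual $c$'s sharing a common factor with $g$ must be handled separately by a singular-series-type argument using local solubility at primes dividing $g$. The exponent $4+\varepsilon$ rather than the heuristic $3$ reflects exactly what one average square-root saving in $c$ buys: the threshold moves from $6\deg g$ (Weil only) to $4\deg g$, but no further.
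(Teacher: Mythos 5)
Your outline follows essentially the same route as the paper: the Browning--Vishe delta method with Kloosterman-style dissection, Poisson summation splitting the modulus into the $r$- and $g$-parts by CRT, evaluation of the quadratic Gauss sums so that the $a$-sum collapses to Kloosterman sums against the dual form $F^*$, a main term of size $|f|/|g|^{3}$ from the singular series and singular integral, the Weil-bound threshold $\deg f\geq(6+\varepsilon)\deg g$, and the appeal to the twisted Linnik--Selberg conjecture to reach $(4+\varepsilon)\deg g$. Two caveats, both of which the paper is explicit about and your sketch glosses over. First, the statement is a conjecture and remains one: like the paper, you only obtain a \emph{conditional} reduction to Conjecture~\ref{tslconj}, and the paper carries this reduction out only for the Morgenstern form with $g$ irreducible --- for a general nondegenerate quaternary $F$ the mod-$r$ sums are Sali\'e-type sums carrying Jacobi-symbol twists $\left(\tfrac{\cdot}{r}\right)$ that do not reduce to the Kloosterman sums appearing in the conjecture, so your claim for general $F$ overreaches. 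Second, what you call the ``main obstacle'' (balancing the $\beta$-integral) is resolved in the paper by the function-field stationary phase theorem, which produces an \emph{archimedean} Kloosterman factor $\Kl_{\infty}\bigl(\psi, fF^{*}(\vec{c})/(4r^{2}g^{4})\bigr)$ that must be kept inside the average over $r$ for the small-$r$ range $E_1$; this is precisely why the twisted Linnik--Selberg conjecture as formulated has a second clause with the factor $\Kl_{\infty}(\psi,ab/r^{2})$, and an argument that averages only the finite-place Kloosterman sums would not close.
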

In fact, to obtain Conjecture~\ref{conj:graphs}, it suffices to prove the above strong approximation result for Morgenstern quadratic forms given by
\[F(x_1,x_2,x_3,x_4)=\eta_1x_1^2+\eta_2x_2^2+ \eta_3x_3^2+\eta_4x_4^2,\]
where $\eta_1=1,$ $\eta_2=-\nu,$ $\eta_3=-(t-1),$  $\eta_4=\nu(t-1)$ and  $\nu\in\mathbb{F}_q$ is not a square.  In~\cite{SZ2019}, we proved Conjecture~\ref{mainconj} when the number of variables is $d\geq 5$. Furthermore, we showed that for quadratic forms in $d=4$ variables, Conjecture~\ref{mainconj} holds  if we strengthen the condition to $\deg f\geq (6+\varepsilon)\deg g+O_{\varepsilon,F}(1)$ (see~\cite{SZ2019} for details). However, as we saw in Corollary 1.6 of \textit{loc.cit.} this implies the weaker upper bound $(2+\varepsilon)\log_{k-1}|X^{q,g}|+O_{\varepsilon}(1)$ on the diameter of such graphs. The main purpose of this paper is to show that the $\frac{4}{3}$-bound is a consequence of a \textit{twisted} Linnik\textendash Selberg conjecture over $\mathbb{F}_q(t)$ that we formulate; the way we show this is that the twisted Linnik\textendash Selberg conjecture below implies Conjecture~\ref{mainconj} for the Morgenstern quadratic form $F$.\\
\\
Working over function fields is not merely a curiosity; as we will see, in the function field case, the oscillatory integrals, once a suitable weight function is chosen, can be computed explicitly for Morgenstern quadratic forms in terms of Kloosterman sums at the infinite place. This uses the function field stationary phase theorem proved by the authors in \cite{SZ2019}. It turns out that the exponential sums can also be written in terms of Kloosterman sums at the finite places. Furthermore, these graphs constructed by Morgenstern provide us with $q+1$-regular Ramanujan graphs, where $q$ need not be a prime, contrary to the Ramanujan graphs constructed by Lubotzky\textendash Phillips\textendash Sarnak, and independently by Margulis. Most importantly, since the untwisted Linnik\textendash Selberg conjecture is known to be true over function fields~\cite{Cogdell} (a consequence of the Ramanujan conjecture over function fields proved by Drinfeld), we are hopeful that we will be able to prove at least a variant of the twisted Linnik\textendash Selberg conjecture and at least improve upon the best known upper bound on the diameter of such graphs; see Remark~\ref{rem4}. When working over the integers, even if one reduces the optimal strong approximation theorem in four variables to a twisted Linnik\textendash Selberg over the integers, there is little hope in saying much about LPS Ramanujan graphs. Indeed, in the integer case, the Linnik\textendash Selberg conjecture is, as of the writing of this paper, open and is not a consequence of the Ramanujan conjecture. We now devote some time to precisely formulating this conjecture.\\
\\
In order to formulate the twisted Linnik\textendash Selberg conjecture over function fields, we first define the Kloosterman sums under consideration in this paper. In Subsection~\ref{s:add-characters}, we define a nontrivial additive character $\psi=\psi_{\infty}$ on $K_{\infty}:=\mathbb{F}_q(\!(1/t)\!)$ that is trivial  on $\mathcal{O}:=\mathbb{F}_q[t].$ 
Given nonzero $r\in\mathbb{F}_q[t]$, we have an additive character
\[\psi_r:\mathbb{F}_q[t]/(r)\rightarrow\mathbb{C}^*\]
given by sending $x\mapsto\psi\left(\frac{x}{r}\right)$. We may extend this to an additive character on the additive structure of the subring $\mathcal{O}_r$ of $K$ with denominator relatively prime to $r$. The extension we are thinking of here is given by sending $x\in\mathcal{O}_r$ to $\psi\left(\frac{x\bmod r}{r}\right)$.
\begin{definition}Suppose $r\in\mathbb{F}_q[t]$ is nonzero, and suppose $m,n\in\mathcal{O}_r$. Then we define the Kloosterman sum associated to $r,m,n$ as follows: 
\[\Kl_r(m,n):=\sum_{x\in(\mathbb{F}_q[t]/(r))^*}\psi_r\left(mx+n\overline{x}\right),\]
where $\overline{x}$ is the multiplicative inverse of $x$ in $\mathbb{F}_q[t]/(r)$. At the infinite place, we have the following definition of the Kloosterman sum. For $\alpha \in K_{\infty},$  define 
\[\Kl_{\infty}(\psi,\alpha):= \begin{cases}\int_{|x|_{\infty}=\hl} \psi\left(\frac{\alpha}{ x}+x\right) dx,  &\text{ if }  |\alpha|_{\infty}=\hl^2 \text{ for some }l\in \mathbb{Z}\\
 0 &\text{ otherwise,}
 \end{cases}\]
where we are integrating on a subset of $K_{\infty}$ equipped with the Haar measure normalized such that the unit open ball $\TT$ in $K_{\infty}$ has measure $1$, $|.|_{\infty}$ is the norm on $K_{\infty}$ induced by the norm $|a/b|_{\infty}=q^{\deg a-\deg b}$ on $K$, and $\hat{l}:=q^{l}$ throughout this paper.
\end{definition}
By Weil's estimate on the Kloosterman sums, we have square-root cancellation on the Kloosterman sums. The following is a twisted analogue of the Linnik\textendash Selberg conjecture positing that sums of Kloosterman sums exhibit an additional square-root cancellation.
 \begin{conjecture}[Twisted Linnik\textendash Selberg conjecture over function fields]\label{tslconj}Suppose $g\in\mathbb{F}_q[t]$ is a nonzero polynomial, and let $\delta\in\mathbb{F}_q[t]$ be relatively prime to $g$. Then for each integer $T\geq 0$, $\alpha\in\mathbb{F}_q[t]$, and nonzero $a,b\in\mathbb{F}_q[t,g^{-1}]$,
\[\left|\sum_{\substack{|r|=\hat{T}\\ (g,r)=1,\ \delta|r}}\psi_{g^2}\left(\alpha r^{-1}\right)\Kl_r(a,b)\right|\ll_{\varepsilon,\delta}|gab|_{\infty}^{\varepsilon}\hat{T}^{1+\varepsilon}\]
for every $\varepsilon>0$. Furthermore, for every $\varepsilon>0$,
\[\left|\sum_{\substack{|r|=\hat{T}\\ (g,r)=1,\ \delta|r}}\psi_{g^2}\left(\alpha r^{-1}\right)\Kl_r(a,b)\Kl_{\infty}(\psi,ab/r^2)\right|\ll_{\varepsilon,\delta}|gab|_{\infty}^{\varepsilon}\hat{T}^{1+\varepsilon}.\]
\end{conjecture}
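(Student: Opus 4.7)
The plan is to reduce Conjecture~\ref{tslconj} to a spectral estimate via a Kuznetsov-type trace formula for $GL_2$ over $\mathbb{F}_q(t)$ and Drinfeld's Ramanujan bound, the same input that yields the untwisted Linnik\textendash Selberg statement of Cogdell~\cite{Cogdell}. Since $\psi_{g^2}(\alpha r^{-1})$ depends on $r$ only modulo $g^2$, I would first partition
\[
\sum_{\substack{|r|=\hat{T}\\ (g,r)=1,\,\delta|r}}\psi_{g^2}(\alpha r^{-1})\Kl_r(a,b)=\sum_{\rho\in(\mathcal{O}/g^2)^*}\psi_{g^2}(\alpha \rho^{-1})\sum_{\substack{r\equiv \rho\bmod g^2\\ |r|=\hat{T},\,\delta|r}}\Kl_r(a,b),
\]
and then open the arithmetic-progression indicator by orthogonality of Dirichlet characters modulo $g^2$. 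This reduces the problem to bounding, for each character $\chi$ of conductor dividing $g^2$, the twisted Linnik\textendash Selberg sum $\sum_r\chi(r)\Kl_r(a,b)$, weighted by a Gauss-type factor $\sum_\rho\psi_{g^2}(\alpha \rho^{-1})\overline{\chi(\rho)}$.

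Next, each $\sum_r\chi(r)\Kl_r(a,b)$ would be estimated by a Kuznetsov trace formula on $\Gamma_0(\delta g^2)$ with nebentypus $\chi$ and a test function at the infinite place chosen to detect $|r|=\hat{T}$. On the spectral side one encounters Hecke eigenvalues $\lambda_\pi(m)$ of cuspidal automorphic representations of $GL_2$ over $\mathbb{F}_q(t)$, which Drinfeld's Ramanujan theorem bounds by $|\lambda_\pi(m)|\ll_\varepsilon|m|^\varepsilon$, along with Eisenstein contributions of comparable size. For the second statement of the conjecture, the weight $\Kl_\infty(\psi,ab/r^2)$ is itself a Bessel-type integral at $\infty$, and by the function-field stationary phase theorem of~\cite{SZ2019} it equals the Bessel transform of an explicit test function; choosing that test function at $\infty$ in the Kuznetsov formula makes $\Kl_\infty$ appear exactly as in the geometric side of the claim.

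The principal obstacle is the uniform dependence on $g$: the Gauss-type factors can be as large as $|g|$, and summing over $|g|^2$ characters requires near-square-root cancellation to recover the claimed $|g|^\varepsilon$ saving, so one cannot work character-by-character but must treat the additive twist spectrally from the outset. Equivalently, one must identify $\psi_{g^2}(\alpha\,\cdot\,)$ with a specific vector in the local representation at $g$ and track it through the trace formula. This requires a function-field Kuznetsov formula with the full flexibility of level, nebentypus, and prescribed test function at $\infty$, together with a Weyl law controlling the cuspidal spectrum with explicit dependence on the level. Developing this machinery---and verifying that Drinfeld's Ramanujan bound genuinely suffices uniformly---is the main technical step, and explains why the statement appears here as a conjecture rather than a theorem.
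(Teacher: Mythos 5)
The statement you are asked about is Conjecture~\ref{tslconj} itself: the paper does not prove it. It is explicitly left open (the authors write that the untwisted version follows from Drinfeld via \cite{Cogdell} and that they ``hope to study this twisted Linnik--Selberg conjecture in the future''); the paper only proves the implication Conjecture~\ref{tslconj} $\Rightarrow$ Conjecture~\ref{mainconj}. So there is no proof in the paper to compare your argument against, and your text should be judged as a proposed attack on an open problem.

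As such, it is a reasonable strategy sketch but not a proof, and the decisive gap is the one you name yourself in your last paragraph. Concretely: after expanding $\psi_{g^2}(\alpha r^{-1})$ over residues mod $g^2$ and applying orthogonality, you are summing roughly $|g|^2$ characters $\chi$, each weighted by a Gauss-type factor of size up to $|g|$ divided by $\phi(g^2)\asymp|g|^2$; even an optimal bound $\hat T^{1+\varepsilon}$ for each individual $\sum_r\chi(r)\Kl_r(a,b)$ then returns $|g|^{1+\varepsilon}\hat T^{1+\varepsilon}$, a full factor of $|g|$ worse than the conjectured $|g|^{\varepsilon}\hat T^{1+\varepsilon}$. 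Recovering that factor requires cancellation \emph{across} the character family, i.e.\ a $g$-uniform spectral treatment of the additive twist, which is precisely the machinery that does not currently exist. A second concrete obstruction: the twist $\chi(r)$ sits \emph{outside} $\Kl_r(a,b)$ as a multiplicative function of the modulus, whereas the Kloosterman sums occurring on the geometric side of a nebentypus Kuznetsov formula carry the character \emph{inside} the sum (on the summation variable $x$), so the reduction to ``Kuznetsov on $\Gamma_0(\delta g^2)$ with nebentypus $\chi$'' is not immediate. The paper itself points to Steiner~\cite{Steiner}, where exactly this route over $\mathbb{Z}$ ran into limitations of the trace formula and the desired bounds were not obtained. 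Your outline correctly identifies where the difficulty lies, but it establishes neither assertion of the conjecture.
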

\begin{remark}
Note that in the above conjecture, it is equivalent to prove the statement by replacing $|r|=\hat{T}$ with $|r|\leq\hat{T}$. Indeed, if we have the latter for every $T$, then we have the former by simply subtracting the terms for $|r|\leq\hat{T}$ from those with $|r|\leq\frac{\hat{T}}{q}$. Conversely, if we assume the former for every $T$, then the latter follows from the triangle inequality and the fact that we are summing $T\ll_{\varepsilon}\hat{T}^{\varepsilon}$ elements.
\end{remark}
\begin{remark}
As previously mentioned, we remark that the untwisted version of the above conjecture is known to be true. For example, see \cite{Cogdell} for a proof of the Linnik\textendash Selberg conjecture over function fields. The authors hope to study this twisted Linnik\textendash Selberg conjecture in the future.
\end{remark}
We prove the following in Section~\ref{morgenstern}.
\begin{theorem}
Conjecture~\ref{tslconj} implies Conjecture~\ref{mainconj} for the Morgenstern quadratic form $F$ at least when $g$ is irreducible. In particular, the twisted Linnik\textendash Selberg conjecture implies that the Morgenstern Ramanujan graphs have diameter at most
\[\left(\frac{4}{3}+\varepsilon\right)\log_{q}|X^{q,g}|+O_{\varepsilon}(1)\]
if the Ramanujan graph is constructed (at least) for irreducible $g$ over $\mathbb{F}_q$ of odd characteristic.
\end{theorem}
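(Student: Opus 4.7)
The plan is to adapt the Hardy--Littlewood circle method in function fields, in its delta-symbol (Kloosterman refined) form, to the system \eqref{maineq} for the Morgenstern form $F$, and to reduce the resulting error term to a sum of exactly the shape controlled by Conjecture~\ref{tslconj}. First, I would fix a smooth weight $w$ supported on vectors in $K_\infty^4$ of size $\hat L$, where $2L$ is comparable to $\deg f$, and study the weighted count
\[
N(f,g,\boldsymbol{\lambda};w) := \sum_{\substack{\vec{x}\in \mathbb{F}_q[t]^4 \\ \vec{x}\equiv\boldsymbol{\lambda}\bmod g}} w(\vec{x})\,\mathbf{1}_{F(\vec{x})=f}.
\]
The substitution $\vec{x}=\boldsymbol{\lambda}+g\vec{y}$ absorbs the congruence, and Poisson summation in $\vec{y}$ introduces a dual variable $\vec{k}\in\mathbb{F}_q[t]^4$. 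Representing $\mathbf{1}_{F(\vec{x})=f}$ by the function-field delta symbol then yields a sum over moduli $r\in\mathbb{F}_q[t]$ with $(r,g)=1$, in which every term is the product of a complete exponential sum over residues modulo $r$ and an oscillatory integral over $K_\infty$.

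Next, I would separate the main term from the error and identify the error as a twisted Kloosterman sum. The diagonal contribution $\vec{k}=\vec{0}$ yields the singular series and singular integral, and since all local conditions are satisfied by hypothesis, this is of the expected size, of order $|f|/|g|^{3}$. For each nonzero $\vec{k}$ the structural input is the splitting
\[
F(\vec{x})=F_1(x_1,x_2)-(t-1)F_1(x_3,x_4),\qquad F_1(u,v)=u^2-\nu v^2,
\]
into two copies of an anisotropic binary norm form. Applying the function-field stationary phase theorem of \cite{SZ2019} to the oscillatory integral at infinity evaluates it in closed form as a Kloosterman integral $\Kl_\infty(\psi,ab/r^2)$, while twisted multiplicativity collapses the residue sum modulo $r$ into a classical Kloosterman sum $\Kl_r(a,b)$ whose entries $a,b\in\mathbb{F}_q[t,g^{-1}]$ are explicit bilinear expressions in $\vec{k}$ and $f$.

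The Poisson step also produces a phase of the form $\psi_{g^2}(\alpha r^{-1})$ for some $\alpha\in\mathbb{F}_q[t]$ depending on $\vec{k}$, $\boldsymbol{\lambda}$, and $f$, together with a divisibility condition $\delta\mid r$ coming from partial vanishing of certain frequency components; because $g$ is irreducible one has $(\delta,g)=1$, which places the inner $r$-sum for each fixed $\vec{k}$ in precisely the form covered by Conjecture~\ref{tslconj}. Applying the conjecture dyadically in $|r|=\hat{T}$ and then summing over nonzero $\vec{k}$ up to the natural Poisson cutoff produces a total error that is dominated by the main term once $\deg f \geq (4+\varepsilon)\deg g+O_{\varepsilon,F}(1)$. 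This establishes Conjecture~\ref{mainconj} for the Morgenstern form, and the diameter bound for $X^{q,g}$ follows from the dictionary between strong approximation for $F$ and word-length in the Morgenstern quotient recorded in \cite{SZ2019}.

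The principal technical obstacle is the stationary-phase evaluation in the second step: one must check that the one-dimensional stationary-phase evaluations coming from the Morgenstern splitting conspire to produce an infinite-place factor which is exactly $\Kl_\infty(\psi,ab/r^2)$ with the bilinear argument $ab/r^2$ matching the shape of Conjecture~\ref{tslconj}, rather than a more general oscillatory integral that would fall outside the scope of the conjecture. It is here that the explicit coefficients $\eta_1,\dots,\eta_4$ of the Morgenstern form enter in an essential way. A secondary issue is the arithmetic bookkeeping of the twist: one must verify that all $g$-factors emerging from the Poisson step collect into the single conductor $g^2$ and that the divisibility condition $\delta\mid r$ is uniform in $\vec{k}$, and the irreducibility hypothesis on $g$ is what keeps this manageable by ruling out cross-terms between distinct prime-power divisors of $g$.
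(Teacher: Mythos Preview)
Your proposal is essentially the paper's own argument: delta-symbol circle method after the substitution $\vec{x}=\boldsymbol{\lambda}+g\vec{y}$, Poisson in the dual variable (the paper calls it $\vec{c}$), main term from $\vec{c}=\vec{0}$ giving the singular series and integral, and the nonzero $\vec{c}$ contribution reduced via stationary phase and explicit Gauss-sum computations to twisted Kloosterman sums that Conjecture~\ref{tslconj} controls.

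Two points where your sketch diverges slightly from what actually happens and would need adjustment. First, the stationary-phase evaluation of $I_{g,r}(\vec{c})$ does \emph{not} always produce $\Kl_\infty(\psi,ab/r^2)$: when $|r|$ is large relative to $|\vec{c}|/|g|$ the integral collapses to the constant $I_{g,r}(\vec 0)$ (Proposition~\ref{prop:oscmorgen}), and it is only in the complementary small-$|r|$ regime that the Kloosterman integral appears. This is precisely why Conjecture~\ref{tslconj} has two estimates, and the paper correspondingly splits the error into $E_1$ (small $|r|$, uses the second estimate with $\Kl_\infty$) and $E_2$ (large $|r|$, uses the first). Your description only accounts for the $E_1$ regime. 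Second, the delta method does not immediately restrict to $(r,g)=1$; one must dispose of the $g\mid r$ contribution separately by a direct Weil-bound argument (Lemma~\ref{g|r}), and the divisor $\delta\in\{1,t-1\}$ arises not from ``partial vanishing of frequency components'' but from splitting the $r$-sum according to whether $t-1\mid r$, because the Gauss-sum factor $(|\gcd(r,t-1)|\tau_r\tau_{r/(r,t-1)})^2$ depends on this. These are exactly the bookkeeping issues you flag, but their resolution is slightly different from what you anticipate.
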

We briefly discuss the history behind the proof. Versions of the circle method, as developed over the integers by Heath-Brown~\cite{Brown} and later over function fields by Browning and Vishe~\cite{Browning}, were successfully applied by the first author to prove optimal strong approximation results for quadratic forms in at least five variables over the integers~\cite{Naser2} and later joint with the second author over $\mathbb{F}_q[t]$~\cite{SZ2019}. These results were achieved with suitable choices of weight functions and delicate divisions of the circles in the two settings. A novelty of the paper ~\cite{SZ2019} was the development of a function field version of the stationary phase theorem that was essential for our calculations there (and here in this paper). That being said, in the case of quadratic forms in four variables (both over the integers and over $\mathbb{F}_q[t]$), which is the case of interest for applications to the covering exponent of $S^3$ and Ramanujan graphs, the results there are suboptimal. It had been observed in \cite[Remark 6.8]{Naser2} that if a certain cancellation in a sum involving exponential sums and oscillatory integrals is true, then optimal strong approximation in the case of four variables would also follow. It was the insight of Browning\textendash Kumarasvamy\textendash Steiner~\cite{BKS} that for the problem regarding optimal covering exponents of $S^3$\textemdash where the quadratic form is a sum of four squares\textemdash the stationary phase theorem can be used to reduce the optimal covering exponent to a twisted Linnik\textendash Selberg conjecture over the integers. We also refer the reader to the work of Steiner~\cite{Steiner} on this integral twisted Linnik\textendash Selberg conjecture; though that paper lead to some insights regarding the limitations of the Kuznetsov trace formula for this covering exponent problem, the desired results were not obtained, unfortunately. In the case of Morgenstern Ramanujan graphs, there are two main differences with the case of the covering exponent problem. Firstly, as mentioned above, the stationary phase theorem in the setting of function fields had to be developed by the authors in~\cite{SZ2019} and applied successfully to the computation of the oscillatory integrals in this paper. Secondly, in addition to the infinite place, the finite places also play a role, complicating the precise computation of the exponential sums under consideration. It is in the Morgenstern case where we are able to relate the exponential sums and oscillatory integrals to Kloosterman sums; for a general quadratic form in four variables, the computations are much more complicated and cannot be written simply in terms of Kloosterman sums. Furthermore, most of the main computations in ~\cite{SZ2019} were done using techniques different from those over the integers; those techniques over function fields are also used in this paper.
\begin{remark}\label{rem4}
Though the twisted Linnik\textendash Selberg Conjecture~\ref{tslconj} would prove the desired Conjecture~\ref{mainconj}, in Conjecture~\ref{tslconj} any power $|g|^{\theta+\varepsilon}$ with $0\leq\theta<1/2$ would allow us to weaken the $\deg f\geq (6+\varepsilon)\deg g+O_{\varepsilon}(1)$ condition, as will be clear from the proof of the theorem above in Section~\ref{morgenstern}. In turn, this would allow us to decrease the upper bound given for the Morgenstern Ramanujan graphs.
\end{remark}
We also prove the following theorem stating that the coefficient $\frac{4}{3}$ cannot be improved upon; see Section~\ref{lowerbound} for a proof.
\begin{theorem}\label{lowerdiam}
Suppose that $q \equiv 3$ mod 4.
There exist infinitely many  $g\in \mathbb{F}_q[t]$ such that the Morgenstern Ramanujan graph $X^{q,g}$ is non-bipartite (or bipartite) and  
\[\textup{diam}(X^{q,g}) \geq \frac{4}{3}\log_{q}|X^{q,g}|+O(1),\]
where $O(1)$ is an absolute constant. 
\end{theorem}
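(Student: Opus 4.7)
The plan is to adapt the first author's integer-case construction from \cite{Sardari2018} of explicit ``far'' vertices in LPS graphs. The goal is to exhibit, for infinitely many irreducible $g\in \mathbb{F}_q[t]$, a vertex $v_g\in X^{q,g}$ with $d_{X^{q,g}}(e,v_g)\geq 4\deg g+O(1)$. Combined with $|X^{q,g}|=q^{3\deg g}(1+o(1))$ for irreducible $g$, this yields the desired $\tfrac{4}{3}\log_q|X^{q,g}|+O(1)$ lower bound on the diameter. The hypothesis $q\equiv 3\pmod 4$ makes $-1$ a non-square in $\mathbb{F}_q$, so the Morgenstern quaternion algebra with norm form $F=x_1^2+x_2^2-(t-1)(x_3^2+x_4^2)$ is a division algebra at the infinite place, which is essential for the construction.

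First I would translate distance into a norm-form representation problem. Walks of length $k$ from $e$ to $v$ correspond to quaternions $\alpha$ in the Morgenstern order $\mathcal{O}$ with $\alpha\equiv \alpha_v\pmod g$ (up to units) and $N(\alpha)=F(\vec x)$ equal to a product of $k$ generator norms (each of degree one in $t$), so $d_{X^{q,g}}(e,v)$ is the minimum $\deg F(\vec x)$ over such $\vec x$ subject to $\vec x\equiv \boldsymbol{\lambda}_v\pmod g$ for a fixed representative $\boldsymbol{\lambda}_v$ of $v$. Second, for each admissible $f$ of degree $k$, Fourier-expand the representation count
\[
r_F(f,\boldsymbol{\lambda}_v,g)=\frac{r_F(f)}{|g|^4}+\frac{1}{|g|^4}\sum_{\substack{\vec c\in (\mathbb{F}_q[t]/g)^4\\ \vec c\neq 0}}\psi_g(-\vec c\cdot \boldsymbol{\lambda}_v)\sum_{F(\vec x)=f}\psi_g(\vec c\cdot \vec x).
\]
The main term satisfies $r_F(f)/|g|^4\asymp|f|/|g|^4$ and is strictly less than $1$ exactly when $\deg f<4\deg g$, while the Fourier tail reduces via the stationary phase theorem of \cite{SZ2019} at infinity and Weil's bound at the finite places to Kloosterman-type sums.

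The crux is to construct $g$ and $\boldsymbol{\lambda}_v$ such that the Fourier tail fails to compensate the main-term shortfall for \emph{every} admissible $f$ of degree below $4\deg g-O(1)$, forcing $r_F(f,\boldsymbol{\lambda}_v,g)=0$ uniformly. I would choose $\boldsymbol{\lambda}_v$ to represent an anisotropic residue class in $\mathcal{O}/g\mathcal{O}$ (made possible by the division-algebra hypothesis at infinity) and use Chebotarev density for $\mathbb{F}_q(t)$ to produce infinitely many irreducible $g$ with the required local splitting invariants, with an additional congruence on $g$ enforcing either the bipartite or non-bipartite case. The main obstacle is establishing the uniform vanishing of $r_F(f,\boldsymbol{\lambda}_v,g)$ over all admissible $f$ with $\deg f<4\deg g-O(1)$ for a single choice of $\boldsymbol{\lambda}_v$; this requires precise Kloosterman-sum estimates combined with an arithmetic construction of $\boldsymbol{\lambda}_v$ that aligns coherently with every such $f$, and is exactly where the specific Morgenstern structure of $F$ as a quaternion norm together with the division-algebra nature at the infinite place is exploited essentially.
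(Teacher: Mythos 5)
Your reduction of the distance to a representation problem for $F(\vec x)=t^h$ with congruence conditions mod $g$ (via Morgenstern's Theorem 4.5) matches the paper's first step, but the core of your argument has a genuine logical gap. You propose to prove that $r_F(t^h,\boldsymbol{\lambda}_v,g)=0$ for all $h<4\deg g-O(1)$ by a circle-method expansion: main term $\asymp |f|/|g|^4<1$ plus a Fourier tail controlled by stationary phase and Kloosterman bounds. This cannot work, for two reasons. First, the logic is inverted: if the main term is positive and the error is smaller, the conclusion is that solutions \emph{exist}, not that they don't; an asymptotic count can only force vanishing if one proves the \emph{total} is $<1$, which requires the error to be bounded by an absolute constant. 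Second, in the four-variable regime with $\deg f$ near $4\deg g$, the error term is comparable to the main term even under the twisted Linnik--Selberg conjecture (this is exactly why the upper-bound direction of this paper is conditional), so no available or even conjectured estimate puts the tail below $1$. The only way an analytic count can vanish is if a local density vanishes, i.e., if there is an exact congruence obstruction --- and then the analytic machinery is superfluous.

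That exact obstruction is what the paper actually uses, and it is entirely elementary. For the bipartite case one chooses irreducible $g$ with $t$ a quadratic non-residue and $-1$ a residue mod $g$, and targets the specific vertex $W=\mathrm{diag}(1,-1)$. A path of length $h$ forces $u_2^2\equiv t^h\pmod{g^2}$ with $t-1\mid u_2$; non-residuosity of $t$ forces $h=2l$ even, and then $u_2\equiv\pm t^l\pmod{g^2}$ together with the degree bound $\deg u_2\le l<2\deg g$ forces the exact identity $u_2=\pm t^l$ (here $q\equiv 3\bmod 4$ enters: $-1$ being a non-square makes $x_1^2+x_2^2$ anisotropic over $\mathbb F_q$, so there is no top-degree cancellation), which contradicts $t-1\mid u_2$. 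The non-bipartite case uses $g=(t^2+1/4)r$ and a max over two target vertices. Your proposal also leaves the non-bipartite construction unaddressed. If you want to salvage your approach, replace the analytic vanishing argument with this kind of exact congruence-plus-degree contradiction for an explicitly chosen target class $\boldsymbol{\lambda}_v$.
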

This also gives us a new family of $q+1$-regular non-bipartite (or bipartite) Ramanujan graphs with large diameter and with $q\equiv 3$ mod 4 any prime power.
\section{Lower bound on the diameter}\label{lowerbound}
In this section, we prove Theorem~\ref{lowerdiam}. Our argument is similar to the previous argument  of the first author in~\cite[Theorem 1.2]{Sardari2018}.
\begin{proof}
Since $q \equiv 3$ mod 4, $-1$ is a quadratic non-residue in $\mathbb{F}_q,$ and  the Morgenstern quadratic form for $\nu=-1$ is
\[
F(x_1,x_2,x_3,x_4):=x_1^2+x_2^2-(t-1)(x_3^2+x_4^2).
\]
Let $g(t)\in \mathbb{F}_q[t]$ be any irreducible polynomial relatively prime to $t(t-1)$ such that $t$ is a quadratic non-residue in the finite field $\mathbb{F}_{q^{\deg(g)}}:=\mathbb{F}[t]/\langle g\rangle,$ and $-1$ is a quadratic residue in $\mathbb{F}_{q^{deg(g)}},$ (which means $\deg(g)$ is even). Then it follows from the work of Morgenstern that $X^{q,g}$ is isomorphic to  the Cayley graph of $\PGL_2(\mathbb{F}_{q^{\deg(g)}})$ generated by $q+1$ generators. The identification is given by the the following map sending the quaternion
\[
x_1+ix_2+jx_3+kx_4 \mapsto \begin{bmatrix}x_1-x_2i & x_3-x_4i \\ (t-1)(x_3+x_4i)& x_1+x_2i\end{bmatrix},
\]
where $i$ is a choice of $\sqrt{-1}$ in $\mathbb{F}_{q^{deg(g)}}.$ Moreover the quadratic residue of the determinant gives a bipartite structure on $X^{q,g}.$ Let $I:=\begin{bmatrix} 1 &0 \\ 0 & 1\end{bmatrix}\in X^{q,g}$ and $W:=\begin{bmatrix} 1 &0 \\ 0 & -1\end{bmatrix}\in X^{q,g}.$ We show that 
\[
\text{dist}(I,W) \geq \frac{4}{3}\log_{q}|X^{q,g}|+O(1).
\]
Suppose that there exists a path of minimal length $h$ from $I$ to $W.$ Then, by \cite[Theorem 4.5]{Morgenstern} there exists an integral solution $\vec{u}:=(u_1, \dots, u_4) \in \mathbb{F}_q[t]^4 $ to 
\[
x_1^2+x_2^2-(t-1)(x_3^2+x_4^2)=t^h,
\]
where $g|\gcd(u_1,u_3,u_4),$  $\gcd(g,u_2)=1$ and $t-1|\gcd(u_1-1,u_2)$. This implies that $u_2^2 \equiv t^h$ mod $g^2.$ Here, we are using that $g$ is irreducible and relatively prime to $t$, and so $g$ cannot divide both $\pm t^h$. Since $t$ is a quadratic non-residue mod $g$ and $t^h$ is a square mod $g$, $h$ is even. Suppose that $h=2l.$ If $l \geq 2 \deg(g)$ then 
\[
\text{dist}(I,W)=h\geq  4 \deg(g) = \frac{4}{3}\log_{q}|X^{q,g}|+O(1).
\]
So, we suppose that $l<2 \deg(g).$ We have 
\(u_2^2 \equiv t^h \equiv t^{2l}\mod g^2.\) This implies 
\(
u_2\equiv \pm t^l \text{ mod } g^2.
\) We write $u_2=ag^2\pm t^l$ for some $a\in \mathbb{F}_q[t].$ For $a\neq 0,$  $\deg(ag^2\pm t^l)\geq 2\deg(g)$, and we have 
\[
2l=\deg(F(\vec{u}))\geq 2 \deg(u_2)= 2\deg(ag^2\pm t^l) \geq 4 \deg(g)> 2l,
\]
because  $-1$ is a non-square residue modulo $q.$
This is a contradiction. So $a=0,$ and this implies $u_1=0,$ which is a contradiction since $t-1|u_1-1.$ This proves Theorem~\ref{lowerdiam} in the bipartite cases.
\\

Next, we give an infinite family of non-bipartite Morgenstern Ramanujan graphs with the same lower bound on their diameter. Let $r\in \mathbb{F}_q[t]$ be any irreducible polynomial relatively prime to $t(t-1)$ and such that $t$ and $-1$ are quadratic residues in the finite field $\mathbb{F}_{q^{\deg(r)}}:=\mathbb{F}[t]/\langle r\rangle.$ Consider the Morgenstern Ramanujan graph $X^{q,(t^2+1/4)r}.$ It follows from the work of Morgenstern that $X^{q,(t^2+1/4)r}$ is isomorphic to  the Cayley graph of $\PSL_2\left(\mathbb{F}[t]/\langle (t^2+1/4)r\rangle\right)$ generated by $q+1$ generators, and $X^{q,(t^2+1/4)r}$ is a non-bipartite Ramanujan graph. Let $I$ and $W$ be as before and define $I^{\prime}:=\begin{bmatrix} 1 &r \\ 0 & 1\end{bmatrix}.$ Let $\sqrt{-1}$ be a square root of $-1$ in $\mathbb{F}[t]/\langle (t^2+1/4)r\rangle.$ Also let $W^{\prime}:=\sqrt{-1}W\in \PSL_2.$ We show that 
\[
\max(\text{dist}(I, I^{\prime}) ,\text{dist}(I,W^{\prime}))  \geq \frac{4}{3}\log_{q}|X^{q,(t^2+1/4)r}|+O(1).
\]
Assume to the contrary that $\max(\text{dist}(I, I^{\prime}) ,\text{dist}(I,W^{\prime})) < 4 \deg(r)=\frac{4}{3}\log_{q}|X^{q,(t^2+1/4)r}|+O(1).$ Since $\text{dist}(I, I^{\prime})< 4 \deg(r),$ by \cite[Theorem 4.5]{Morgenstern} it follows that there exists an  integral solution
 \[
 a_1^2+a_2^2-(t-1)(a_3^2+a_4^2)=t^{h_1}
 \]
 for some $h_1< 4 \deg (r)$, where $r|\gcd (a_2,a_3,a_4)$, $(t-1)|\gcd(a_1-1,a_2)$, and at least one of $a_3$ or $a_4$ is non-zero.  This implies $a_1^2\equiv t^{h_1}$ mod $r^2.$ We consider two cases: $h_1$ is even or $h_1$ is odd. First, suppose that $h_1=2l_1.$ Then $a_1\equiv \pm t^{l_1}$ mod $r^2$. This implies $a_1=c_1g^2\pm t^{l_1}$ for some $c_1\in \mathbb{F}_q[t].$
Suppose that $c_1\neq 0.$ Then $\deg(a_1)\geq 2\deg(r),$ and we have 
\[
h_1=\deg( a_1^2+a_2^2-(t-1)(a_3^2+a_4^2))\geq 2\deg(a_1) \geq 4 \deg(r). 
\]
This is a contradition. Hence, $c_1=0.$ This implies $a_3=a_4=0,$ which is also a contradiction. Therefore, $h_1=2l_1+1$ is odd. 
\\

Similarly, since $\text{dist}(I,W^{\prime})<4 \deg(r)$ by assumption, it follows that there exists an integral solution
 \[
 b_1^2+b_2^2-(t-1)(b_3^2+b_4^2)=t^{h_2}
 \]
 for some $h_2< 4 \deg (r)$, where $r|\gcd (b_1,b_3,b_4)$, $(t-1)|\gcd(b_1-1, b_2)$. By a similar argument it follows that $h_2=2l_2+1$ is odd and we have $b_2^2\equiv t^{h_2}$ mod $r^2.$ Therefore, we have
 \[
\begin{cases}
a_1\equiv 1 \text{ mod } (t-1), \text{ and } a_1^2 \equiv t^{2l_1+1} \text{ mod } r^2, \text{ and } l_1<2\deg(r),
\\
b_2\equiv 0 \text{ mod } (t-1), \text{ and } b_2^2 \equiv t^{2l_2+1} \text{ mod } r^2, \text{ and } l_2<2\deg(r).
\end{cases}
 \] 
Without loss of generality, suppose that $l_1\geq l_2.$ Then, we have 
\[
a_1\equiv \pm t^{l_1-l_2} b_2 \text{ mod } r^2.
\]
Note that $\deg(a_1)< l_1+1/2< 2\deg(r)$ and $\deg(t^{l_1-l_2} b_2)<l_1+1/2< 2\deg(r).$ Hence, 
\[
a_1= \pm t^{l_1-l_2} b_2.
\]
This contradicts with $\begin{cases}
a_1\equiv 1 \text{ mod } (t-1),\\
b_2\equiv 0 \text{ mod } (t-1).
\end{cases}
$ This completes the proof of our theorem. 

 \end{proof}

\section{Recollections on the delta method}\label{deltamethod}
The primary purpose of this section is to collect some of the facts related to the delta method over $\mathbb{F}_q[t]$. This section also serves the purpose of setting the notation for the rest of the paper. For details, the reader may consult Section 2 of the authors' paper~\cite{SZ2019}.\\
\\
Roughly, the delta method is a procedure by which one rewrites the delta function over integral points inside a region as a weighted sum of characters. In this section, we define a weighted sum $N(w, \boldsymbol{\lambda})$ counting the number of integral solutions to the system~\eqref{maineq}.
\subsection{Notation}
As in the authors' paper~\cite{SZ2019}, we let $K=\mathbb{F}_q(t)$ and let $\mathcal{O}=\mathbb{F}_q[t]$ be its ring of integers. We denote the prime at infinity $t^{-1}$ by $\infty$. We may equip $K$ with the norm at infinity given by
\[|a/b|_{\infty}:=q^{\deg a-\deg b}.\]
Completing $K$ with respect to this norm gives $K_{\infty}$. Henceforth, we drop the subscript $\infty$ from $|.|_{\infty}$ and write $|.|$ for simplicity. We can also extend the norm to higher dimensions: for every $d$, the natural norm on $K_{\infty}^d$ is given by $|\mathbf{a}|:=\max_i|a_i|$. These norms equip $K_{\infty}^d$ with the metric topology.\\
\\
Note that we may identify $K_\infty$ with the field
$$
\FF_q(\!(1/t)\!)=\left\{\sum_{i\leq N}a_it^i: \mbox{for $a_i\in \FF_q$ and some $N\in\ZZ$} \right\}.
$$
The (open) unit ball in this topological space is
$$\TT=\{\alpha\in K_\infty: |\alpha|<1\}=\left\{\sum_{i\leq -1}a_it^i: \mbox{for $a_i\in \FF_q$}
\right\}.
$$
\subsection{Characters}\label{s:add-characters}
Let $e_q:\FF_q\rightarrow \CC^*$ be the nontrivial additive character given by sending $a\in \FF_q$ to 
$e_q(a)=\exp(2\pi i \tr(a)/p)$, where $p:=\textup{char }\mathbb{F}_q$ and $\tr: \FF_q\rightarrow \FF_p$ is the
trace map. From this, we obtain the non-trivial additive character $\psi:
K_\infty\rightarrow \CC^*$ given by $\psi(\alpha)=e_q(a_{-1})$ for any 
$\alpha=\sum_{i\leq N}a_i t^i$ in $\Ki$. By construction, $\psi|_\cO$ is trivial. 
Furthermore, for any $\gamma \in \Ki$, the map $\alpha\mapsto \psi(\alpha\gamma)$ is also an additive
character on $\Ki$. A basic lemma that will be useful in our computations is the following.
\begin{lemma}[Kubota, Lemma 7 of
\cite{kubota}]\label{lem:orthogsum}
$$
\sum_{\substack{b\in \cO\\ |b|<\hat N}}\psi(\gamma b)=\begin{cases}
\hat N, & \mbox{if $|(\!(\gamma)\!)|<\hat N^{-1}$,}\\
0, & \mbox{otherwise},
\end{cases}
$$
for any $\gamma \in \ki$ and any integer $N\geq 0$, where $(\!(\gamma)\!)$ is the part of $\gamma$ with all degrees negative.
\end{lemma}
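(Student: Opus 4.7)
The plan is to reduce the statement to orthogonality of the additive character $e_q$ on $\mathbb{F}_q$ by expanding everything in coefficients. Write $\gamma=\sum_{i\leq M}c_it^i\in K_\infty$ and parametrize $b\in\mathcal{O}$ with $|b|<\hat{N}$ as $b=\sum_{j=0}^{N-1}b_jt^j$ with $b_j\in\mathbb{F}_q$; this ranges bijectively over all such $b$.

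Next I would compute the coefficient of $t^{-1}$ in $\gamma b$, since by definition $\psi(\gamma b)=e_q\bigl(\text{coeff of }t^{-1}\text{ in }\gamma b\bigr)$. Multiplying out, the coefficient of $t^{-1}$ in $\gamma b$ equals $\sum_{j=0}^{N-1}c_{-1-j}b_j$, because only pairs $(i,j)$ with $i+j=-1$ and $0\leq j\leq N-1$ contribute. Therefore
\[
\sum_{\substack{b\in\cO\\|b|<\hat N}}\psi(\gamma b)=\sum_{b_0,\dots,b_{N-1}\in\mathbb{F}_q}e_q\Bigl(\sum_{j=0}^{N-1}c_{-1-j}b_j\Bigr)=\prod_{j=0}^{N-1}\sum_{b_j\in\mathbb{F}_q}e_q(c_{-1-j}b_j),
\]
which factors the sum into $N$ one-dimensional character sums on $\mathbb{F}_q$.

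Now I apply standard orthogonality on $\mathbb{F}_q$: the inner sum is $q$ if $c_{-1-j}=0$ and $0$ otherwise. Hence the full product equals $q^N=\hat N$ precisely when $c_{-1}=c_{-2}=\cdots=c_{-N}=0$, i.e.\ when the polar part $(\!(\gamma)\!)=\sum_{i\leq -1}c_it^i$ has all its coefficients in degrees $-1,\dots,-N$ vanishing. This vanishing is equivalent to $|(\!(\gamma)\!)|\leq q^{-N-1}<q^{-N}=\hat N^{-1}$; otherwise at least one factor vanishes and the total sum is $0$. This matches the two cases in the statement.

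There is essentially no obstacle: the only delicate point is keeping the indexing of coefficients straight (ensuring that the $N$ relevant coefficients of $\gamma$ are exactly $c_{-1},\dots,c_{-N}$, so that the threshold on $|(\!(\gamma)\!)|$ comes out as $\hat N^{-1}$ rather than an off-by-one variant), and making sure that the parametrization $b\mapsto(b_0,\dots,b_{N-1})$ is a bijection onto $\{b\in\mathcal{O}:|b|<\hat N\}$, which follows immediately from the definition of $|\cdot|$ on $K_\infty$.
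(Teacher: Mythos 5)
Your proof is correct: the parametrization of $\{b\in\cO:|b|<\hat N\}$ by coefficient tuples, the extraction of the $t^{-1}$-coefficient of $\gamma b$, and the factorization into $N$ one-dimensional character sums over $\FF_q$ all check out, including the equivalence between $c_{-1}=\dots=c_{-N}=0$ and $|(\!(\gamma)\!)|<\hat N^{-1}$. The paper itself gives no proof (it cites Kubota, Lemma 7), and your argument is exactly the standard direct verification of that result.
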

We also have the following
\begin{lemma}[Kubota, Lemma 1(f) of \cite{kubota}]\label{lem:orthog}
Let $Y\in \ZZ$ and $\gamma\in  \ki$. Then 
$$
\int_{|\alpha|<\hat{Y}} \psi(\alpha \gamma) d \alpha=\begin{cases}
\hat Y, &\mbox{if $|\gamma|<\hat Y^{-1}$},\\
0, &\mbox{otherwise.}
\end{cases}
$$
\end{lemma}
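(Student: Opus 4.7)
The plan is to exploit the structure of $\Ki=\FF_q(\!(1/t)\!)$ as a self-dual local field, recognising that the ball $B_Y:=\{\alpha\in\Ki:|\alpha|<\hat Y\}$ is a compact open additive subgroup and that $\chi_\gamma:\alpha\mapsto\psi(\alpha\gamma)$ is a continuous character of $\Ki$, hence restricts to a character of $B_Y$. The two cases in the statement correspond exactly to whether $\chi_\gamma|_{B_Y}$ is trivial.

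First, I would record that $\vol(B_Y)=\hat Y$. This follows from the normalisation $\vol(\TT)=1$ together with the identity $B_Y=t^Y\cdot\TT$ and the fact that multiplication by $t$ scales Haar measure on $\Ki$ by $|t|=q$, so $\vol(B_Y)=q^Y=\hat Y$. Next, I would pin down the condition that $\chi_\gamma$ is trivial on $B_Y$. Writing $\alpha=\sum_{i\leq Y-1}a_i t^i$ and $\gamma=\sum_j c_j t^j$, the coefficient of $t^{-1}$ in $\alpha\gamma$ equals $\sum_{i\leq Y-1}a_i c_{-1-i}$, and by construction $\psi(\alpha\gamma)=e_q\bigl(\sum_{i\leq Y-1}a_i c_{-1-i}\bigr)$. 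Demanding this be $1$ for \emph{every} choice of coefficients $\{a_i\}_{i\leq Y-1}\subset\FF_q$, and using that $e_q$ is a nontrivial character of $\FF_q$ (so the only $c\in\FF_q$ with $e_q(ac)=1$ for all $a\in\FF_q$ is $c=0$), forces $c_{-1-i}=0$ for every $i\leq Y-1$, i.e.\ $c_j=0$ for all $j\geq -Y$. Equivalently, $|\gamma|\leq \hat Y^{-1}/q<\hat Y^{-1}$, matching the stated condition.

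Finally, I would conclude by separating cases. If $|\gamma|<\hat Y^{-1}$, then $\chi_\gamma\equiv 1$ on $B_Y$ and the integral equals $\vol(B_Y)=\hat Y$. Otherwise the previous step produces some $\alpha_0\in B_Y$ with $\chi_\gamma(\alpha_0)\neq 1$; by translation-invariance of Haar measure,
\[\int_{B_Y}\psi(\alpha\gamma)\,d\alpha=\int_{B_Y}\psi((\alpha+\alpha_0)\gamma)\,d\alpha=\chi_\gamma(\alpha_0)\int_{B_Y}\psi(\alpha\gamma)\,d\alpha,\]
forcing the integral to vanish. There is no real obstacle here: everything reduces to the standard orthogonality argument for locally compact abelian groups, together with the explicit coefficient computation above. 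The only point requiring mild care is that the condition $|\gamma|<\hat Y^{-1}$ is strict (rather than $\leq$), which is precisely what falls out of the nontriviality of $e_q$ on $\FF_q$ in the coefficient analysis.
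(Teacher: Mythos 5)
Your proof is correct. The paper itself offers no proof of this lemma---it is quoted directly from Kubota's Lemma 1(f)---so there is nothing to compare against line by line; your argument (volume of the ball $B_Y=t^Y\TT$ equals $\hat Y$, the character $\alpha\mapsto\psi(\alpha\gamma)$ is trivial on $B_Y$ precisely when $|\gamma|<\hat Y^{-1}$ via the coefficient-of-$t^{-1}$ computation and nontriviality of $e_q$, and the standard translation-invariance orthogonality argument otherwise) is the expected standard one, and you correctly handle the boundary case $|\gamma|=\hat Y^{-1}$, where the character is still nontrivial and the integral vanishes.
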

In particular, if we set $Y=0$, then we obtain the following expression for the delta function on $\mathcal{O}$:
\[\delta(x)=\int_{\TT}\psi(\alpha x)d\alpha,\]
where 
\(
\delta(x)=\begin{cases} 1 &\text{ if } x=0,\\ 0 &\text{ otherwise.}  \end{cases}
\)
\subsection{The delta function}The idea now is to decompose $\TT$ into a disjoint union of balls (with no minor arcs) which is the analogue of Kloosterman's version of the circle method in this function field setting. This is done via the following lemma of Browning and Vishe \cite[Lemma~4.2]{Browning}.
\begin{lemma}\label{lem:dissection}
For any $Q>1$ we have a disjoint union 
\[
\TT=\bigsqcup_{\substack{r\in \cO\\ |r|\leq \hat Q\\
\textup{$r$ monic}}}
\bigsqcup_{\substack{a\in \cO\\ |a|<|r|\\ (a,r)=1} }
\left\{\alpha\in \TT: |r\alpha-a|<\hat Q^{-1}\right\}.\]
\end{lemma}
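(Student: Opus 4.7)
The plan is to verify the two properties characterizing a disjoint union: (i) every $\alpha \in \TT$ lies in at least one of the balls on the right-hand side, and (ii) distinct admissible pairs $(a,r) \neq (a',r')$ produce disjoint balls. Both parts hinge on the ultrametric triangle inequality, which is what makes the decomposition \emph{strictly} disjoint in the function field setting\textemdash there are no overlaps of the sort one must tolerate in the classical archimedean Farey dissection.

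For disjointness, I would suppose some $\alpha \in \TT$ lies in two balls with parameters $(a,r)$ and $(a',r')$. Since $|r\alpha - a|, |r'\alpha - a'| < \hat Q^{-1}$ while $|r|, |r'| \leq \hat Q$, the ultrametric inequality gives
\[|r'a - ra'| = |r'(r\alpha - a) - r(r'\alpha - a')| \leq \max\bigl(|r'|\cdot|r\alpha-a|,\; |r|\cdot|r'\alpha-a'|\bigr) < 1.\]
Since $r'a - ra' \in \cO$, it must vanish. The coprimality conditions $(a,r) = (a',r') = 1$ combined with the monicness of both $r$ and $r'$ then force $r = r'$ and hence $a = a'$.

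For coverage, I would fix $\alpha \in \TT$ and run a function-field analogue of Dirichlet's pigeonhole. The set of $b \in \cO$ with $|b| \leq \hat Q$ has cardinality $\hat Q \cdot q$, while the partition of $\TT$ induced by the coefficients at $t^{-1}, \ldots, t^{-Q}$ has only $\hat Q$ cells. Two distinct polynomials $b_1, b_2$ must therefore have $\{b_1\alpha\}$ and $\{b_2\alpha\}$ in the same cell; setting $r_0 := b_1 - b_2$ and letting $a_0 \in \cO$ be the polynomial part of $r_0\alpha$, one gets $r_0 \neq 0$, $|r_0| \leq \hat Q$, and $|r_0\alpha - a_0| < \hat Q^{-1}$. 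Dividing $r_0, a_0$ by $\gcd(r_0,a_0)$ and then scaling so that the quotient of $r_0$ is monic produces a pair $(a,r)$ with $(a,r) = 1$, $r$ monic, $|r| \leq \hat Q$, and $|r\alpha - a| < \hat Q^{-1}$ (the last inequality only sharpens under division by a polynomial of norm $\geq 1$). Finally, $|a| < |r|$ comes from the ultrametric bound $|a| \leq \max(|r\alpha|, |r\alpha - a|)$ together with $|r\alpha| < |r|$ (since $\alpha \in \TT$) and $|r\alpha - a| < \hat Q^{-1} \leq 1 \leq |r|$.

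The argument has no real obstacle; the main subtleties are (a) propagating \emph{strict} inequalities through the ultrametric step in disjointness so as to obtain $|r'a - ra'| < 1$ rather than merely $\leq 1$ (which would not suffice to force the integer $r'a - ra'$ to vanish), and (b) verifying after the reduction step in coverage that the normalized pair $(a,r)$ still satisfies $|a| < |r|$, which is precisely where the non-archimedean nature of $|\cdot|$ makes the argument tighter than its archimedean counterpart.
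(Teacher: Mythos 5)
Your proof is correct: the ultrametric bound $|r'a-ra'|\leq\max(|r'|\,|r\alpha-a|,|r|\,|r'\alpha-a'|)<\hat Q\cdot\hat Q^{-1}=1$ forces $r'a=ra'$ and hence, via coprimality and monicity, $(a,r)=(a',r')$; and your pigeonhole argument on the $\hat Q$ residue cells of $\TT$ modulo $\{|x|<\hat Q^{-1}\}$, followed by division by $\gcd(r_0,a_0)$ and normalization, is the standard function-field Dirichlet approximation. The paper does not prove this lemma itself but quotes it from Browning--Vishe, whose argument is essentially the one you give (Dirichlet approximation for existence, the ultrametric inequality for disjointness), so there is nothing to add.
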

The following follows from Lemma \ref{lem:dissection}; see \cite[Lemma~2.4]{SZ2019}.
\begin{lemma}\label{delta} Let $Q\geq  1$ and $n\in \cO.$ 
We have 
\begin{equation}\label{deltaeq}
\delta(n)= \frac{1}{\hQ^{2}}
\sum_{\substack{
r\in \cO\\
|r|\leq \hat Q\\
\textup{$r$ monic}
} }
\sumstar_{\substack{
|a|<|r|} }
\psi\left(\frac{an}{r}\right) h\left(\frac{r}{t^Q},\frac{n}{t^{2Q}}\right),
\end{equation}
where we henceforth put
\(
\sumstar_{\substack{
|a|<|r| }}
:=\sum_{\substack{
a\in \cO\\
a\textup{ monic}\\ 
|a|<|r|\\ (a,r)=1} },
\)
and $h$ is only  defined for $x\neq 0$ as:
\[h(x,y)=\begin{cases}|x|^{-1} & \mbox{if $|y|<|x|$}\\ 0 & \mbox{otherwise.}\end{cases}\]
Moreover, 
\[
\frac{1}{\hQ^{2}}h\left(\frac{r}{t^Q},\frac{n}{t^{2Q}}\right)= \int_{|\alpha|<|r|^{-1}\hQ^{-1}} \psi\left(\alpha n\right) d \alpha.
\label{delta}\]
\end{lemma}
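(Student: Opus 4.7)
The plan is to start from the Fourier-analytic expression for the indicator of $n=0$ and feed in the dissection of Lemma~\ref{lem:dissection}. By Lemma~\ref{lem:orthog} applied with $Y=0$, we have
\[
\delta(n)=\int_{\TT}\psi(\alpha n)\,d\alpha.
\]
Using the disjoint decomposition of $\TT$ provided by Lemma~\ref{lem:dissection}, the integral becomes a double sum over monic $r\in\cO$ with $|r|\leq\hat Q$ and over $a\in\cO$ with $|a|<|r|$, $(a,r)=1$, of integrals over the small balls $B_{a,r}:=\{\alpha\in\TT: |r\alpha-a|<\hat Q^{-1}\}$.

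Next, on each such ball I would change variables $\alpha=\tfrac{a}{r}+\beta$, so that $\alpha$ ranges over $B_{a,r}$ iff $|\beta|<|r|^{-1}\hat Q^{-1}$. Since $\psi|_\cO$ is trivial and $\psi$ is additive,
\[
\psi(\alpha n)=\psi\!\left(\tfrac{a n}{r}\right)\psi(\beta n),
\]
so the oscillatory factor $\psi(an/r)$ pulls outside the $\beta$-integral. This reduces the computation to evaluating
\[
\int_{|\beta|<|r|^{-1}\hat Q^{-1}}\psi(\beta n)\,d\beta,
\]
which is given directly by Lemma~\ref{lem:orthog}: it equals $|r|^{-1}\hat Q^{-1}$ when $|n|<|r|\hat Q$ and vanishes otherwise. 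Since there is no question of convergence (the outer sum is finite) this exchange of sum and integral is immediate.

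Finally, I would check that this matches the stated form. Multiplying and dividing by $\hat Q^2$, the factor $|r|^{-1}\hat Q^{-1}$ is exactly $\hat Q^{-2}\cdot\hat Q/|r|$, and the condition $|n|<|r|\hat Q$ rewrites as $|n/t^{2Q}|<|r/t^Q|$, which is precisely the support condition built into $h(x,y)$. Hence
\[
\hat Q^{-2}\,h\!\left(\tfrac{r}{t^Q},\tfrac{n}{t^{2Q}}\right)=\int_{|\alpha|<|r|^{-1}\hat Q^{-1}}\psi(\alpha n)\,d\alpha,
\]
giving both the displayed identity~\eqref{deltaeq} for $\delta(n)$ and the integral representation of $\hat Q^{-2}h$ that is asserted. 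There is no real obstacle here: the argument is essentially just bookkeeping once one recognizes that the dissection of Lemma~\ref{lem:dissection} plays exactly the role of Kloosterman's refinement of the circle method, and that orthogonality on the small ball $|\beta|<|r|^{-1}\hat Q^{-1}$ produces the weight $h$ together with its support condition.
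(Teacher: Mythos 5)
Your argument is correct and is exactly the intended one: the paper gives no independent proof of this lemma, deferring to \cite[Lemma~2.4]{SZ2019} and noting that it ``follows from Lemma~\ref{lem:dissection}'', which is precisely what you carry out \textemdash{} write $\delta(n)=\int_{\TT}\psi(\alpha n)\,d\alpha$, split $\TT$ via the dissection, substitute $\alpha=a/r+\beta$, and evaluate the remaining ball integral by Lemma~\ref{lem:orthog}. One cosmetic caveat: your derivation produces the sum over \emph{all} $a$ with $|a|<|r|$, $(a,r)=1$ (as in Lemma~\ref{lem:dissection}), whereas the displayed definition of $\sumstar$ restricts to monic $a$; the former is the correct normalization (the monic-$a$ version already fails for $n=0$, $Q=1$, where it yields $1/q$ rather than $1$), so the discrepancy is a slip in the statement rather than in your proof.
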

\subsection{Smooth sum $N(w,\boldsymbol{\lambda})$}
Let
\[
w(\vec{x})=\begin{cases}
1 \text{ if } |\vec{x}| \leq |f|^{1/2},
\\
0 \text{ otherwise.}
\end{cases}
\]
Note that
\[
w(g\vec{t}+\boldsymbol{\lambda})=\begin{cases}
1 \text{ if } |\vec{t}|  <  \hR,
\\
0 \text{ otherwise,}
\end{cases}
\]
where  $R:= \lfloor{\deg(f)/2} -\deg(g)+1\rfloor.$
Assume that $\vec{x}\in \mathcal{O}^d$ satisfies the conditions $F(\vec{x})=f$ and $\vec{x}\equiv \boldsymbol{\lambda}\bmod g$. We uniquely write $\vec{x}=g\vec{t}+ \boldsymbol{\lambda},$ where $\vec{t}\in  \mathcal{O}^d$ and $\boldsymbol{\lambda}=(\lambda_1,\dots,\lambda_d)$ for $\lambda_i$ of degree strictly less than that of $g$. Define 
\begin{equation}\label{defk}
k:= \frac{f-F(\boldsymbol{\lambda})}{g}.
\end{equation}
If
$F(\vec{x}) =f$, then $g^2F(\vec{t})+2g\boldsymbol{\lambda}^T A\vec{t} =f-F(\boldsymbol{\lambda})$ which implies that $g|2\boldsymbol{\lambda}^TA\vec{t}-k.$ 
 Then, 
$F(\vec{t})+\frac{1}{g}(2\boldsymbol{\lambda}^T A\vec{t}-k )=0.$
We also define 
\[G(\vec{t}):=\frac{F(g\vec{t}+\boldsymbol{\lambda})-f}{g^2}=F(\vec{t})+\frac{1}{g}(2\boldsymbol{\lambda}^T A\vec{t}-k).\]
Finally, we  define $$N(w,\boldsymbol{\lambda}):=\sum_{\vec{t}} w(g\vec{t}+\boldsymbol{\lambda}) \delta{(G(\vec{t}))},$$  where $\vec{t}\in \mathcal{O}^{d}$. Note that $N(w,\boldsymbol{\lambda})$ is the weighted number of $\vec{x}\in\mathcal{O}^d$ satisfying the conditions $F(\vec{x})=f$ and $\vec{x}\equiv \boldsymbol{\lambda}\bmod g$. We apply the delta expansion in \eqref{deltaeq} to   $ \delta{(G(\vec{t}))}$ and follow the computations in \cite[Section 2.4]{SZ2019}, and obtain 
\begin{equation}\label{newequu}N(w,\boldsymbol{\lambda})=\frac{1}{|g|\hQ^2}\sum_{\substack{
r\in \cO\\
|r|\leq \hat Q\\
\text{$r$ monic}
} }\sum_{\vec{c}\in\mathcal{O}^d}|gr|^{-d}S_{g,r}(\vec{c})I_{g,r}(\vec{c}),\end{equation}
where $I_{g,r}(\vec{c})$ and $S_{g,r}(\vec{c})$ are defined by 
\begin{equation}\label{intt}
I_{g,r}(\vec{c}):= \int_{\Ki^d} h\left(\frac{r}{t^Q},\frac{G(\vec{t})}{t^{2Q}}\right) w(g\vec{t}+\boldsymbol{\lambda}) \psi\left(\frac{\left< \vec{c},\vec{t} \right>}{gr}    \right) d\vec{t},
\end{equation}
and
\begin{equation}\label{sala}S_{g,r}(\vec{c}):=\sum_{\substack{\ell\in \cO\\ |\ell|<|g|}}\sumstar_{|a|<|r|} S_{g,r}(a,\ell,\vec{c})\end{equation}
with
\begin{equation}\label{salam}
S_{g,r}(a,\ell,\vec{c}):=\sum_{\vec{b}\in (\mathcal{O}/(gr))^d} \psi\left(\frac{(a+r\ell)(2\boldsymbol{\lambda}^T A\vec{b}-k) + ag F(\vec{b})-\left< \vec{c},\vec{b} \right>} {gr}\right).
\end{equation}
We henceforth assume that \textit{$r$ is always monic} without saying so. In the next two sections, we give explicit formulas for $S_{g,r}$ and $I_{g,r}$ when our quadratic form is the Morgenstern quadratic form.

\section{The oscillatory integrals $I_{g,r}(\vec{c})$}\label{osil}
In this section, we give explicit formulas for the oscillatory integrals $I_{g,r}(\vec{c})$ in terms of the  Kloosterman sums at infinity. Suppose that $F(\vec{t}):=\sum_{i} \eta_i t_i^2$ is the   Morgenstern quadratic form and $F^*(\vec{c}):= \sum_{i}  c_i^2/\eta_i$ is its dual.  Let  $\kappa:=\max_{i} |\frac{c_i}{g }|.$  In this section, we assume that $Q= R+1.$
\begin{proposition}\label{prop:oscmorgen}
Suppose that $  |r| \leq \hQ.$ For the Morgenstern quadratic form, we have 
\[
I_{g,r}(\vec{c})=\begin{cases}
0, &\text{ if } \kappa \geq \hQ/\hR,
\\
0, &  \kappa= \frac{|r|}{\hR}, \deg(f) \text{is even},  \text{and } \hQ q^{-3} < |r| \leq \hQ,
\\
I_{g,r}(\vec{0}), &\text{ if } \kappa< \frac{|r|}{\hR},
\\  
I_{g,r}(\vec{0}), &\text{ if } \kappa= \frac{|r|}{\hR},  \max{(|c_3|,|c_4|)}> \max{(|c_1|,|c_2|)}, 
\\ &\text{ and }\deg(f) \text{is even},  |r| \leq \hQ q^{-3},
\\-\hQ^2|g|^{2} |r|^{2}|F^{*}(\vec{c})|^{-1}\Kl_{\infty}\left(\psi, \frac{kF^{*}(\vec{c})}{4r^2g^3}\right),  &\text{otherwise.} \end{cases}
\]
\end{proposition}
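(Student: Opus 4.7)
The plan is to compute $I_{g,r}(\vec c)$ by exploiting the diagonal structure of the Morgenstern form to reduce the four-dimensional oscillatory integral to a product of one-dimensional Gaussian integrals, then apply the function field stationary phase theorem of \cite{SZ2019} to each factor. First, substitute the integral representation
\[\frac{1}{\hat Q^{2}}h\!\left(\tfrac{r}{t^{Q}},\tfrac{G(\vec t)}{t^{2Q}}\right)=\int_{|\alpha|<(|r|\hat Q)^{-1}}\psi(\alpha G(\vec t))\,d\alpha\]
into \eqref{intt} and interchange the orders of integration. Since $F=\sum_{i}\eta_i t_i^{2}$ is diagonal, the $\vec t$-integral factors and
\[I_{g,r}(\vec c)=\hat Q^{2}\int_{|\alpha|<(|r|\hat Q)^{-1}}\psi(-\alpha k/g)\prod_{i=1}^{4}J_i(\alpha)\,d\alpha,\]
where $J_i(\alpha)=\int_{|t|<\hat R}\psi(\alpha\eta_i t^{2}+\beta_i(\alpha)\,t)\,dt$ with $\beta_i(\alpha):=2\alpha\eta_i\lambda_i/g+c_i/(gr)$.

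Next I would complete the square in each $J_i$: the critical point is $t_i^{*}=-\beta_i/(2\alpha\eta_i)=-\lambda_i/g-c_i/(2\alpha\eta_i gr)$ with critical value $-\beta_i^{2}/(4\alpha\eta_i)$. By non-archimedean translation invariance on the ball $|t|<\hat R$, if $|t_i^{*}|<\hat R$ then $J_i(\alpha)=\psi(-\beta_i^{2}/(4\alpha\eta_i))\int_{|u|<\hat R}\psi(\alpha\eta_i u^{2})\,du$, a pure Gauss integral evaluable by the stationary phase theorem of \cite{SZ2019}; otherwise the domain of the shifted variable is a ball disjoint from the origin, and Lemma~\ref{lem:orthog} applied to the residual linear phase forces $J_i=0$ unless the quadratic contribution compensates. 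Case (i), $\kappa\geq\hat Q/\hat R$, follows because for the index $i$ realising $\kappa$ one has $|c_i/(2\alpha\eta_i gr)|>\hat R$ throughout the $\alpha$-support, so $|t_i^{*}|>\hat R$ and $J_i\equiv 0$. Case (iii), $\kappa<|r|/\hat R$, follows since then the $c_i/(gr)$ term is subdominant to the $\lambda_i$ term in $\beta_i$, so $J_i(\alpha)$ and therefore $I_{g,r}(\vec c)$ depends only on $\boldsymbol\lambda$, coinciding with $I_{g,r}(\vec 0)$.

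In the generic regime, collecting the critical values and using $k=(f-F(\boldsymbol\lambda))/g$ to cancel the $F(\boldsymbol\lambda)$-contribution produces the net phase $-\alpha f/g^{2}-F^{*}(\vec c)/(4\alpha g^{2}r^{2})-\langle\vec c,\boldsymbol\lambda\rangle/(g^{2}r)$ multiplied by $\prod_{i}G(\alpha\eta_i,\hat R)$. The pure Gauss integrals evaluate to an explicit power of $|\alpha|$ times a sign/unit determined by stationary phase; after simplification the prefactor becomes $-|g|^{2}|r|^{2}|F^{*}(\vec c)|^{-1}$. The remaining $\alpha$-integral, of the form $\int\psi(-\alpha f/g^{2}-F^{*}(\vec c)/(4\alpha g^{2}r^{2}))\,d\alpha$, becomes $\Kl_{\infty}(\psi,kF^{*}(\vec c)/(4r^{2}g^{3}))$ after the change of variable $\alpha\mapsto c/\alpha$ with $c$ chosen to normalise the two terms to the shape $\mu/x+x$ appearing in the definition of $\Kl_{\infty}$; this uses that the Kloosterman integral is supported where $|\alpha|^{2}=|\mu|$, and reading off the modulus gives exactly $kF^{*}(\vec c)/(4r^{2}g^{3})$.

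The main obstacle will be the boundary case $\kappa=|r|/\hat R$, where $|t_i^{*}|$ equals the radius of the support ball for some index. Here the analysis is delicate because one must simultaneously track: (a) which indices satisfy $|t_i^{*}|<\hat R$ versus $=\hat R$, with the degree-$1$ coefficients $\eta_3,\eta_4=\pm(t-1)\nu^{\pm1}$ producing a parity shift of one in $|\beta_i^{2}/\eta_i|$ compared with the degree-$0$ coefficients $\eta_1,\eta_2$; and (b) the parity constraint that $\Kl_\infty(\psi,\mu)=0$ unless $|\mu|=\hat l^{2}$ for some integer $l$. These two interact: when $\deg f$ is even and $\max(|c_3|,|c_4|)>\max(|c_1|,|c_2|)$, the degree-shift from $\eta_3,\eta_4$ forces $|F^{*}(\vec c)|$ to have the wrong parity for the infinite Kloosterman sum to be non-zero at the boundary, producing cases (ii) and (iv); the threshold $|r|\lessgtr\hat Q q^{-3}$ then distinguishes whether this forces strict vanishing or merely reduction to the $\vec c=\vec 0$ contribution. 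Verifying these parity bookkeeping steps carefully, and confirming that the non-generic boundary contributions assemble correctly into one of the listed cases, is where the bulk of the technical work lies.
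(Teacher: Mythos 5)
Your overall skeleton matches the paper's: insert the character-integral representation of the indicator, use the diagonal structure to factor the $\vec{t}$-integral, split indices according to whether the critical point lies in the ball, evaluate the Gauss integrals by the stationary phase theorem of \cite{SZ2019}, and recognise the residual $\alpha$-integral as $\Kl_\infty$. Cases (i) and (iii) go through essentially as you describe (though for (iii) the operative fact is not ``subdominance'' but that $\kappa<|r|/\hR$ forces $|\langle\vec{c},\vec{t}\rangle/(gr)|<q^{-1}$ on the domain, so the character is identically $1$). One structural difference: the paper first invokes Lemma~\ref{auxlem} to replace the condition $|G(\vec{t})|<\hQ|r|$ by $|F(\vec{t})-k/g|<\hQ|r|$, so the $\lambda_i$-linear terms never enter the phase. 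By keeping $\psi(\alpha G(\vec{t}))$ you pick up the extra phase $-\langle\boldsymbol{\lambda},\vec{c}\rangle/(g^2r)$ and an argument $fF^*(\vec{c})/(4r^2g^4)$ rather than $kF^*(\vec{c})/(4r^2g^3)$ in $\Kl_\infty$; to land on the stated formula you must still check these extra terms are harmless, which you do not do.

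The genuine gap is in cases (ii) and (iv), which you defer and for which your proposed mechanism is wrong. You attribute both to a parity obstruction in the argument of $\Kl_\infty$ (that $|F^*(\vec{c})|$ has the wrong parity for the infinite Kloosterman sum to be nonzero). That is not what happens: in these cases the computation never reaches the stationary-phase/$\Kl_\infty$ stage at all. For (ii), when $\hQ q^{-3}<|r|\leq\hQ$ and $\deg f$ is even, one has $\hQ|r|>|k/g|$ and $|F(\vec{t})|\leq\widehat{2R-1}<\hQ|r|$ for all $|\vec{t}|<\hR$, so the quadratic constraint is vacuous and $I_{g,r}(\vec{c})=\frac{\hQ}{|r|}\int_{|\vec{t}|<\hR}\psi(\langle\vec{c},\vec{t}\rangle/(gr))\,d\vec{t}$, which vanishes by Lemma~\ref{lem:orthog} precisely because $\kappa=|r|/\hR$ puts some $|c_i/(gr)|$ exactly at the threshold $\hR^{-1}$. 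For (iv), the point is a parity argument on the $\vec{t}$-side, not the $\vec{c}$-side: on the support of $|F(\vec{t})-k/g|<\hQ|r|$ the top degree of $F(\vec{t})$ must match that of $k/g$, which is even, and since $\eta_3,\eta_4$ have degree $1$ this forces $\max(|t_3|,|t_4|)<\max(|t_1|,|t_2|)$; combined with $\max(|c_3|,|c_4|)>\max(|c_1|,|c_2|)$ this gives $|\langle\vec{c},\vec{t}\rangle|<|\vec{c}||\vec{t}|$, so the character is again identically $1$ and $I_{g,r}(\vec{c})=I_{g,r}(\vec{0})$ (with a separate short argument when $|r|=\hQ q^{-3}$ and $\hQ|r|>|k/g|$). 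Without these two arguments the case analysis in the proposition is incomplete, and the threshold $|r|\lessgtr\hQ q^{-3}$ cannot be explained by properties of $\Kl_\infty$.
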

We give the proof of Proposition~\ref{prop:oscmorgen} at the end of this section. We proceed by citing some general results from \cite{SZ2019} that are not specific to the Morgenstern quadratic form. In particular, we do not restrict the number of variables to $d=4$ for the moment. Recall that 
\[G(\vec{t}):=\frac{F(g\vec{t}+\boldsymbol{\lambda})-f}{g^2}=F(\vec{t})+\frac{1}{g}(2\boldsymbol{\lambda}^T A\vec{t}-k),\]
where $k=\frac{f-F(\lambda)}{g}.$ 
  We have  
\begin{equation}\label{intt}
I_{g,r}(\vec{c})= \int_{\Ki^d} h\left(\frac{r}{t^Q},\frac{G(\vec{t})}{t^{2Q}}\right) w(g\vec{t}+\boldsymbol{\lambda}) \psi\left(\frac{\left< \vec{c},\vec{t} \right>}{gr}    \right) d\vec{t}= \int_{\substack{|\vec{t}| < \hR \\ |G(\vec{t})|< \hQ|r|}} \frac{\hQ}{|r|}  \psi\left(\frac{\left< \vec{c},\vec{t} \right>}{gr}    \right) d\vec{t}.
\end{equation}
We cite \cite[Lemma 5.5]{SZ2019}.
\begin{lemma}\label{auxlem}
Let $Q$ and  $R$  be as above, and suppose that $|\vec{t}|< \hR.$ Then $|G(\vec{t})| < \hQ|r|$ is equivalent to $|F(\vec{t})-k/g|<\hQ |r|.$ Moreover, if $|G(\vec{t})|< \hQ|r|$, then $|G(\vec{t}+\boldsymbol{\zeta})|  < \hQ|r|$ for every $\boldsymbol{\zeta}\in K_\infty^d,$ where $|\boldsymbol{\zeta}| \leq \min(|r|,\hR).$
\end{lemma}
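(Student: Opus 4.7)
My plan is to convert $I_{g,r}(\vec{c})$ into a fully oscillatory integral over both $\vec{t}$ and an auxiliary variable $\alpha$, then exploit the diagonal structure of the Morgenstern form so that the function field stationary phase theorem from~\cite{SZ2019} applies coordinate by coordinate. The five cases of the proposition should then drop out from the position of the stationary point relative to the integration ball and from parity considerations arising from the factor $(t-1)$ in $\eta_3,\eta_4$.

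Concretely, I would first invoke Lemma~\ref{auxlem} to replace the constraint $|G(\vec{t})|<\hQ|r|$ by $|F(\vec{t})-k/g|<\hQ|r|$, giving
\[I_{g,r}(\vec{c})=\frac{\hQ}{|r|}\int_{\substack{|\vec{t}|<\hR\\|F(\vec{t})-k/g|<\hQ|r|}}\psi\!\left(\frac{\langle\vec{c},\vec{t}\rangle}{gr}\right)d\vec{t}.\]
Next I would Fourier-invert the indicator of $|F(\vec{t})-k/g|<\hQ|r|$ using Lemma~\ref{lem:orthog} at scale $\hY=(\hQ|r|)^{-1}$, so that
\[I_{g,r}(\vec{c})=\hQ^2\int_{|\alpha|<(\hQ|r|)^{-1}}\psi\!\left(-\frac{\alpha k}{g}\right)\prod_{i=1}^4\int_{|t_i|<\hR}\psi\!\left(\alpha\eta_i t_i^2+\frac{c_{\alpha,i}\,t_i}{gr}\right)dt_i\,d\alpha,\]
where $\vec{c}_\alpha:=\vec{c}+2\alpha r A\boldsymbol{\lambda}$ and $c_{\alpha,i}$ is its $i$-th coordinate. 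Each inner factor is a one-dimensional Gauss integral.

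Then I would apply the function field stationary phase theorem coordinate by coordinate. Completing the square gives a phase $\alpha\eta_i(t_i+c_{\alpha,i}/(2\alpha\eta_i gr))^2 - c_{\alpha,i}^2/(4\alpha\eta_i g^2r^2)$, so each factor equals a prefactor times $\psi(-c_{\alpha,i}^2/(4\alpha\eta_i g^2r^2))$ provided the stationary point $-c_{\alpha,i}/(2\alpha\eta_i gr)$ sits inside the ball $|t_i|<\hR$; otherwise it vanishes. Multiplying over $i$, the combined quadratic contribution becomes $\psi(-F^*(\vec{c}_\alpha)/(4\alpha g^2r^2))$, the linear twist from $\boldsymbol{\lambda}$ cleanly reassembles so that the $\vec{c}$-dependence enters only through $F^*(\vec{c})$ after a change of variable in $\alpha$, and the remaining integral in $\alpha$ is exactly $\Kl_\infty(\psi,kF^*(\vec{c})/(4r^2g^3))$ after the substitution $x\mapsto 4\alpha g^3r^2/(kF^*(\vec{c}))$, which accounts for the prefactor $-\hQ^2|g|^2|r|^2|F^*(\vec{c})|^{-1}$ in case~5.

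The case dichotomy should then read off from two conditions on $\kappa$. When $\kappa<|r|/\hR$ one directly bounds $|\langle\vec{c},\vec{t}\rangle/(gr)|\leq q^{-2}$ on the integration domain, so the character is trivial and $I_{g,r}(\vec{c})=I_{g,r}(\vec{0})$, which is case~3. When $\kappa\geq\hQ/\hR=q$ the stationary point for at least one coordinate lies strictly outside $|t_i|<\hR$ and the corresponding one-dimensional Gauss integral vanishes, giving case~1. The delicate part, which I expect to be the main obstacle, is the boundary regime $\kappa=|r|/\hR$: here the stationary point sits exactly on the boundary of $|t_i|<\hR$, and the one-dimensional stationary phase evaluation depends on the parity of $\deg(\alpha\eta_i)$. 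For $i\in\{1,2\}$ the coefficient $\eta_i$ is a unit so the parity is controlled by $\deg\alpha$, while for $i\in\{3,4\}$ the factor $(t-1)$ shifts the parity by one; when $\deg f$ (hence the relevant degree of $\alpha$) is odd the two parities never align destructively, but when $\deg f$ is even and the maximum of $|\vec{c}|/|g|$ is attained on the $(c_3,c_4)$-block the quadratic Gauss sum vanishes, yielding case~4 (with the character again trivial on the remaining factors so $I_{g,r}(\vec{c})=I_{g,r}(\vec{0})$), and when $|r|$ is so large that the Fourier truncation $|\alpha|<(\hQ|r|)^{-1}$ leaves no compatible frequency, the integral vanishes outright, yielding case~2. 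I would therefore spend most of the technical work on this parity bookkeeping, together with matching Kubota's measure normalization to the stated prefactor in case~5.
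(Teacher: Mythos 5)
Your proposal does not prove the statement in question. The statement is Lemma~\ref{auxlem}, a purely ultrametric assertion: that for $|\vec{t}|<\hR$ the condition $|G(\vec{t})|<\hQ|r|$ is equivalent to $|F(\vec{t})-k/g|<\hQ|r|$, and that the condition is stable under perturbations $\boldsymbol{\zeta}$ with $|\boldsymbol{\zeta}|\leq\min(|r|,\hR)$. What you have written is instead a proof sketch of Proposition~\ref{prop:oscmorgen} (the five-case evaluation of $I_{g,r}(\vec{c})$ in terms of $\Kl_\infty$), a different and much later result. Worse, your very first step is to ``invoke Lemma~\ref{auxlem}'' to replace $|G(\vec{t})|<\hQ|r|$ by $|F(\vec{t})-k/g|<\hQ|r|$ --- that is, you assume the statement you were asked to prove and then build on it. Nothing in your argument (stationary phase, Gauss integrals, parity of $\deg(\alpha\eta_i)$, the Kloosterman substitution) bears on the lemma itself.

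The actual content of Lemma~\ref{auxlem} is an application of the strong triangle inequality, using the definition $G(\vec{t})=F(\vec{t})-k/g+\tfrac{2}{g}\boldsymbol{\lambda}^{T}A\vec{t}$. Since $|\boldsymbol{\lambda}|<|g|$ and $|\vec{t}|<\hR$, the difference $G(\vec{t})-(F(\vec{t})-k/g)=\tfrac{2}{g}\boldsymbol{\lambda}^{T}A\vec{t}$ has norm strictly less than $|A|\hR\leq\hQ\leq\hQ|r|$ (recall $Q=R+1$, the entries of $A$ have degree at most $1$ for the Morgenstern form, and $|r|\geq 1$), so by the ultrametric inequality one of $|G(\vec{t})|<\hQ|r|$ and $|F(\vec{t})-k/g|<\hQ|r|$ holds if and only if the other does. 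For the second assertion one expands $G(\vec{t}+\boldsymbol{\zeta})-G(\vec{t})=2\vec{t}^{T}A\boldsymbol{\zeta}+F(\boldsymbol{\zeta})+\tfrac{2}{g}\boldsymbol{\lambda}^{T}A\boldsymbol{\zeta}$ and checks that each term has norm less than $\hQ|r|$ when $|\boldsymbol{\zeta}|\leq\min(|r|,\hR)$. Note also that the paper itself does not reprove this lemma; it cites it as Lemma~5.5 of~\cite{SZ2019}, so any self-contained proof should follow the elementary estimate above rather than the analytic machinery you deployed.
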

We cite \cite[Lemma 5.6]{SZ2019}.
\begin{lemma}\label{o}
Suppose that $ \kappa \geq \hQ/\hR$  and  $  |r| \leq \hQ$. Then, $I_{g,r}(\vec{c})=0.$
\end{lemma}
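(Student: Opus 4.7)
The plan is to open up the indicator $\mathbf{1}[|G(\vec t)|<\hQ|r|]$ via Lemma~\ref{lem:orthog}, then exploit the diagonality of the Morgenstern form $F(\vec t)=\sum_{i=1}^{4}\eta_i t_i^{2}$ to factor the $\vec t$-integral into four one-dimensional quadratic Gauss-type integrals, each of which is evaluated exactly by the function field stationary phase theorem of~\cite[\S5]{SZ2019}. Case~1 ($\kappa\ge\hQ/\hR$) is exactly Lemma~\ref{o}. For the other four cases, substituting $G(\vec t)=F(\vec t)+(2\boldsymbol{\lambda}^{T}A\vec t-k)/g$ and rearranging yields
\[
I_{g,r}(\vec c)=\hQ^{2}\!\!\int_{|\alpha|<(\hQ|r|)^{-1}}\!\!\psi\!\left(-\tfrac{\alpha k}{g}\right)\prod_{i=1}^{4}\int_{|t_i|<\hR}\psi(\alpha\eta_i t_i^{2}+v_i t_i)\,dt_i\,d\alpha,\qquad v_i:=\tfrac{2\alpha(A\boldsymbol{\lambda})_i}{g}+\tfrac{c_i}{gr},
\]
and stationary phase identifies the $i$-th inner integral with a unit Gauss constant times $\psi(-v_i^{2}/(4\alpha\eta_i))$ precisely when the stationary point $t_i^{*}=-v_i/(2\alpha\eta_i)$ lies strictly inside $|t_i|<\hR$, and with $0$ otherwise. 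It is this containment condition together with the bound $|\alpha|<(\hQ|r|)^{-1}$ that partitions the parameter space into the remaining four cases.

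In Case~5 (the generic range $|r|/\hR<\kappa<\hQ/\hR$) all four stationary points lie strictly inside the box, so all four Gauss factors contribute. The $\vec c$-constant and $\vec c$-linear pieces of $\sum_i v_i^{2}/(4\alpha\eta_i)$, combined with the $-\alpha k/g$ term, reorganise into a pure linear-in-$\alpha$ oscillation, a constant-in-$\alpha$ phase factor (absorbed into the prefactor), and the inverse phase $-F^{*}(\vec c)/(4\alpha g^{2}r^{2})$. A substitution $\alpha\mapsto x$ linearising the first oscillation then identifies the $\alpha$-integral with $\Kl_\infty(\psi,kF^{*}(\vec c)/(4r^{2}g^{3}))$, and the Haar-measure factors together with the unit Gauss constants multiply up to the prefactor $-\hQ^{2}|g|^{2}|r|^{2}|F^{*}(\vec c)|^{-1}$.

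For Case~3 ($\kappa<|r|/\hR$), Lemma~\ref{auxlem} furnishes translation invariance of the support $\{|\vec t|<\hR,\ |G(\vec t)|<\hQ|r|\}$ under shifts $\boldsymbol{\zeta}$ with $|\boldsymbol{\zeta}|\le\min(|r|,\hR)$, so the change of variable $\vec t\mapsto\vec t+\boldsymbol{\zeta}$ gives $I_{g,r}(\vec c)=\psi(\langle\vec c,\boldsymbol{\zeta}\rangle/(gr))\,I_{g,r}(\vec c)$; in the subregime $|r|>\hR$, the hypothesis $\kappa<|r|/\hR$ is exactly the condition (via Lemma~\ref{lem:orthog}) for $\boldsymbol{\zeta}\mapsto\psi(\langle\vec c,\boldsymbol{\zeta}\rangle/(gr))$ to be trivial on the shift group, while in the subregime $|r|\le\hR$ the bound $\kappa<|r|/\hR\le 1$ already forces $\vec c=\vec 0$; in both sub-regimes $I_{g,r}(\vec c)=I_{g,r}(\vec 0)$. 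The threshold $\kappa=|r|/\hR$ is the delicate one. In Case~4 ($|r|\le\hQ q^{-3}$), the hypothesis $\max(|c_3|,|c_4|)>\max(|c_1|,|c_2|)$ together with $\deg\eta_1=\deg\eta_2=0$, $\deg\eta_3=\deg\eta_4=1$, and the non-squareness of $\nu$ (which precludes cancellation of leading terms) forces $\deg F^{*}(\vec c)$ to be odd, so that $\deg(kF^{*}(\vec c)/(4r^{2}g^{3}))$ is odd when $\deg f$ is even and $\Kl_\infty$ vanishes by definition, leaving only the $\vec c$-independent contribution $I_{g,r}(\vec 0)$. In Case~2 ($\hQ q^{-3}<|r|\le\hQ$), the containment $|v_i|<|\alpha\eta_i|\hR$ fails throughout the permissible $\alpha$-range for whichever coordinate $i$ carries the maximum of $|c_i|$, so the corresponding 1D factor vanishes identically and $I_{g,r}(\vec c)=0$.

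The main obstacle is the careful bookkeeping of Haar-measure and Gauss-sum constants through the four completed squares and through the substitution $\alpha\mapsto x$, which must assemble precisely into the coefficient $-\hQ^{2}|g|^{2}|r|^{2}|F^{*}(\vec c)|^{-1}$ with the correct sign (determined by the product $\eta_1\eta_2\eta_3\eta_4=\nu^{2}(t-1)^{2}$ and the Jacobian of the change of variables). A secondary but delicate point is the unified treatment of the threshold $\kappa=|r|/\hR$, where the stationary point sits on the boundary of the support and one must simultaneously use shift-invariance (to identify the surviving $\vec c$-independent piece $I_{g,r}(\vec 0)$) and the degree-parity vanishing of $\Kl_\infty$ (to rule out any residual Kloosterman contribution) in order to split cleanly between Cases~2 and~4.
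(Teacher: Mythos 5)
Your proposal does not actually prove the statement at hand. You set up a proof of all of Proposition~\ref{prop:oscmorgen}, announce that ``Case~1 ($\kappa\ge\hQ/\hR$) is exactly Lemma~\ref{o}'', and then devote the entire argument to ``the other four cases''. As a proof of Lemma~\ref{o} this is circular: the one case you were asked to establish is the one you skip. The only mechanism in your write-up that could conceivably cover it is your assertion that the $i$-th one-dimensional factor equals a Gauss constant times $\psi(-v_i^2/(4\alpha\eta_i))$ when the stationary point $t_i^{*}$ lies strictly inside $|t_i|<\hR$, ``and $0$ otherwise''. That dichotomy is false: when the stationary point lies outside the box the quadratic part of the phase is effectively constant there, and the factor degenerates to a linear character integral, which equals $\hR$ whenever $|c_i|/|g|<|r|/\hR$ and vanishes only otherwise. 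This is precisely the term $\delta_{|\alpha|\le\kappa_i}\,\delta_{|c_i|/|g|<|r|/\hR}$ in the paper's formula~\eqref{oscformula2}, so your ``$0$ otherwise'' cannot be used as stated; taken literally it would also corrupt the bookkeeping in your Cases~3--5.

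The argument you need (the paper imports the lemma as \cite[Lemma~5.6]{SZ2019}) is the shift-invariance argument you already deploy for your Case~3. By Lemma~\ref{auxlem}, the domain $\{|\vec t|<\hR,\ |G(\vec t)|<\hQ|r|\}$ is stable under translation by any $\boldsymbol{\zeta}$ with $|\boldsymbol{\zeta}|\le\min(|r|,\hR)$ that also preserves the box, whence $I_{g,r}(\vec c)=\psi\bigl(\langle\vec c,\boldsymbol{\zeta}\rangle/(gr)\bigr)\,I_{g,r}(\vec c)$ for all such $\boldsymbol{\zeta}$. If $\kappa\ge\hQ/\hR$, choose $i$ with $|c_i/g|\ge\hQ/\hR$; since $|r|\le\hQ$ and $\hQ=q\hR$, there are admissible $\zeta_i$ with $|c_i\zeta_i/(gr)|=q^{-1}$, so the character $\boldsymbol{\zeta}\mapsto\psi(\langle\vec c,\boldsymbol{\zeta}\rangle/(gr))$ is nontrivial on the group of admissible shifts and averaging over that group forces $I_{g,r}(\vec c)=0$. (Equivalently, from \eqref{oscformula2}: for the maximizing index one has both $|c_i|/|g|\ge|r|/\hR$ and $\kappa_i\ge q>1$, so both summands of the $i$-th factor vanish for every $\alpha$.) You have the right tool in hand; you simply never aim it at the statement you were asked to prove.
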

\begin{lemma}
Suppose that $ \kappa= \frac{|r|}{\hR}, \deg(f) \text{is even},  \text{and } \hQ q^{-3} < |r| \leq \hQ.$ Then, $I_{g,r}(\vec{c})=0.$
\end{lemma}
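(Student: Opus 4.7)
The plan is to apply a translation argument to the oscillatory integral \eqref{intt}. Since $\kappa = \max_i |c_i/g| = |r|/\hR$, fix a coordinate index $i_0 \in \{1,2,3,4\}$ with $|c_{i_0}| = |g||r|/\hR$. For a parameter $\mu \in \mathbb{F}_q^*$, let $\vec\zeta \in K_\infty^4$ denote the vector whose $i_0$-th coordinate equals $\mu t^{R-1}$ and whose other coordinates vanish, so that $|\vec\zeta| = \hR/q$.

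First I would verify that the region of integration $\{\vec t : |\vec t| < \hR \text{ and } |G(\vec t)| < \hQ|r|\}$ appearing in \eqref{intt} is invariant under $\vec t \mapsto \vec t + \vec\zeta$. The condition $|\vec t+\vec\zeta| < \hR$ is immediate from the ultrametric inequality since $|\vec\zeta| = \hR/q < \hR$. For the $G$-condition, the hypothesis $|r| > \hQ q^{-3} = \hR/q^2$ forces $|r| \geq \hR/q$ (as $q$-adic norms take integer-power values), so $|\vec\zeta| = \hR/q \leq \min(|r|,\hR)$ and Lemma~\ref{auxlem} yields $|G(\vec t+\vec\zeta)| < \hQ|r|$. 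A change of variables in \eqref{intt} therefore gives
\[
I_{g,r}(\vec c) \;=\; \psi\!\left(\frac{\langle \vec c, \vec\zeta\rangle}{gr}\right) I_{g,r}(\vec c) \;=\; \psi\!\left(\frac{c_{i_0}\mu t^{R-1}}{gr}\right) I_{g,r}(\vec c).
\]

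Next I would compute the phase explicitly. Since $|c_{i_0}/(gr)| = 1/\hR = q^{-R}$, we may write $c_{i_0}/(gr) = \alpha t^{-R} + O(t^{-R-1})$ with $\alpha \in \mathbb{F}_q^*$. Multiplication by $\mu t^{R-1}$ then produces $c_{i_0}\mu t^{R-1}/(gr) = \mu\alpha t^{-1} + O(t^{-2})$, so by the definition of $\psi$ the phase equals $e_q(\mu\alpha) = \exp(2\pi i \tr(\mu\alpha)/p)$. Since $\alpha \neq 0$, the $\mathbb{F}_p$-linear map $\mu \mapsto \tr(\mu\alpha)$ from $\mathbb{F}_q$ to $\mathbb{F}_p$ is non-zero and hence surjective, so I may choose $\mu \in \mathbb{F}_q^*$ with $\tr(\mu\alpha) \neq 0$, making $e_q(\mu\alpha) \neq 1$. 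Combined with the displayed identity this forces $I_{g,r}(\vec c) = 0$.

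The main obstacle is establishing the invariance of the region under the shift. The hypothesis $|r| > \hQ q^{-3}$ is exactly the threshold needed: shifting by $\vec\zeta$ contributes cross and quadratic terms $2\eta_{i_0} t_{i_0} \zeta_{i_0} + \eta_{i_0} \zeta_{i_0}^2$ to $G(\vec t+\vec\zeta) - G(\vec t)$, whose sizes scale with $|\eta_{i_0}|$ (which may be as large as $q$ for the Morgenstern form when $i_0 \in \{3,4\}$); the threshold $\hQ|r|$ just barely accommodates these terms when $|r| > \hR/q^2$, and any smaller value of $|r|$ would push the shifted point outside the region. The even-degree hypothesis on $f$ enters through the precise alignment of $\hR$ with the support of the weight $w(g\vec t + \boldsymbol{\lambda})$, ensuring that the statement of Lemma~\ref{auxlem} applies cleanly at the boundary of the region for the shift of size exactly $\hR/q$.
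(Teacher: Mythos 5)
Your argument is correct, but it proceeds differently from the paper's. The paper first uses the evenness of $\deg f$ together with $\hQ q^{-3}<|r|$ to show that the constraint $|G(\vec{t})|<\hQ|r|$ is \emph{automatically satisfied} on the whole box $|\vec{t}|<\hR$ (since $|F(\vec{t})|\leq \widehat{2R-1}<\hQ|r|$ and $\hQ|r|>|k/g|$), so that $I_{g,r}(\vec{c})$ collapses to a pure character integral $\frac{\hQ}{|r|}\int_{|\vec{t}|<\hR}\psi\bigl(\langle\vec{c},\vec{t}\rangle/(gr)\bigr)d\vec{t}$, which vanishes by Lemma~\ref{lem:orthog} because $\kappa=|r|/\hR$ makes the character nontrivial on that box. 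You never identify the domain explicitly; instead you exploit only its invariance under a shift of size $\hR/q$ (supplied by the ``moreover'' part of Lemma~\ref{auxlem}, which applies precisely because $|r|>\hQ q^{-3}$ forces $|r|\geq\hR/q$) and then kill the integral with a nontrivial phase $e_q(\mu\alpha)$. Both routes are sound; the paper's yields the extra information that the domain is the full box, while yours is more robust in that it needs less about the domain. One inaccuracy in your closing discussion: the even-degree hypothesis on $f$ plays no role in your argument \textemdash\ Lemma~\ref{auxlem} carries no parity assumption, and your shift works verbatim for odd $\deg f$ \textemdash\ so you have in fact proved a slightly more general statement (which is consistent with Proposition~\ref{prop:oscmorgen}, since in this range of $|r|$ the conditions $\kappa_i<\hl<1$ in the ``otherwise'' case cannot be met); the remarks attributing the threshold and the parity hypothesis to the ``boundary alignment'' of the weight are not where the hypotheses actually enter.
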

\begin{proof}
Since $\deg(f)$ is even, $R=\frac{\deg (f)}{2} -\deg(g)+1$ and $Q=\frac{\deg (f)}{2} - \deg(g)+2.$ Hence, $ \hQ|r| > |k/g|$, and  by Lemma~\ref{o},  $|G(\vec{t})| < \hQ|r|$ is equivalent to 
\[
|F(\vec{t})|< \hQ|r|.
\]
Since  $\hQ q^{-3} < |r| \leq \hQ$ and  $|\vec{t}|<\hR,$
\[
|F(\vec{t})| \leq \widehat{2R-1} < \hQ|r|.
\] 
So, the inequality $|G(\vec{t})| < \hQ|r|$  is satisfied automatically, and we have 
\[
I_{g,r}(\vec{c})=  \frac{\hQ}{|r|}  \int_{|\vec{t}| < \hR }   \psi\left(\frac{\left< \vec{c},\vec{t} \right>}{gr} \right)   d\vec{t}=  0,
\]
where we used $ \kappa= \frac{|r|}{\hR}.$ This completes the proof of our lemma. 
\end{proof}

\begin{lemma}\label{noosc}
Suppose that either
\begin{enumerate}
\item $\kappa< \frac{|r|}{\hR},$ \label{1cas}
\item $ \kappa= \frac{|r|}{\hR},  \max{(|c_3|,|c_4|)}>\max{(|c_1|,|c_2|)},\deg(f) \text{is even},  \text{and } |r| \leq \hQ q^{-3} $\label{2cas},
\end{enumerate}
 then  $I_{g,r}(\vec{c})=I_{g,r}(\vec{0}).$
\end{lemma}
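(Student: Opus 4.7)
The plan is to trivialize the oscillatory phase $\vec{t}\mapsto\psi(\langle\vec{c},\vec{t}\rangle/(gr))$ on the integration domain $D := \{\vec{t}:|\vec{t}|<\hR,\ |G(\vec{t})|<\hQ|r|\}$ (possibly after passing to cosets of a small ball), so that $I_{g,r}(\vec{c})=I_{g,r}(\vec{0})$. The central tool is Lemma \ref{lem:orthog}, which characterizes when an additive character is trivial on a ball.

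For case (1), the hypothesis $\kappa<|r|/\hR$ rewrites coordinate-wise as $|c_i/(gr)|<\hR^{-1}$ for every $i$. Applying Lemma \ref{lem:orthog} with $\hY=\hR$ and $\gamma=c_i/(gr)$, each coordinate character $\zeta\mapsto\psi(\zeta\,c_i/(gr))$ is trivial on $\{|\zeta|<\hR\}$. Hence $\psi(\langle\vec{c},\vec{t}\rangle/(gr))\equiv 1$ on $\{|\vec{t}|<\hR\}\supseteq D$, and the identity $I_{g,r}(\vec{c})=I_{g,r}(\vec{0})$ follows immediately from \eqref{intt}.

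In case (2), the same argument still handles the $t_1,t_2$ factors since $|c_1|,|c_2|<|gr|/\hR$ strictly. The hypothesis $|r|\leq\hQ q^{-3}=\hR/q^2$ (using $\deg f$ even, so $\hQ=q\hR$) is then strong enough to guarantee both translation invariance of $D$ under the ball $B:=\{|\vec{\zeta}|\leq|r|\}$ (Lemma \ref{auxlem}) and triviality of the full phase on $B$, because $\max_i|c_i|\leq|gr|/\hR\leq|g|/q^2<|g|/q$ yields $|c_i/(gr)|<(q|r|)^{-1}$ for every $i$. On each coset $\vec{t_\alpha}+B\subseteq D$ the phase therefore collapses to the constant $\psi(\langle\vec{c},\vec{t_\alpha}\rangle/(gr))$, reducing the task to showing this constant equals $1$ for every such coset.

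The final step is a top-degree coefficient comparison where the Morgenstern structure enters. Coset representatives $\vec{t_\alpha}$ may be chosen with coordinates supported only at degrees in $[\log_q|r|+1,\, R-1]$. Inspecting admissible degree pairings (using $|c_i/(gr)|\leq\hR^{-1}$ for $i=3,4$ and $|c_i/(gr)|<\hR^{-1}$ for $i=1,2$) shows that the $t^{-1}$ coefficient of $\langle\vec{c},\vec{t_\alpha}\rangle/(gr)$ collapses to $\beta_3^{(R)}\tau_3^{(R-1)}+\beta_4^{(R)}\tau_4^{(R-1)}$, where $\beta_i^{(R)}$ and $\tau_i^{(R-1)}$ denote the leading $t^{-R}$- and $t^{R-1}$-coefficients of $c_i/(gr)$ and $t_{\alpha,i}$ respectively. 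On the other hand, the $t^{2R-1}$ coefficient of $G(\vec{t_\alpha})$ is fed only by the quadratic part $F(\vec{t_\alpha})$ (the remaining term $(2\boldsymbol{\lambda}^TA\vec{t}-k)/g$ has degree $\leq 2R-2$) and equals $-(\tau_3^{(R-1)})^2+\nu(\tau_4^{(R-1)})^2$; this must vanish because $|G|<\hQ|r|\leq q^{2R-1}$. Since $\nu$ is a non-square in $\FF_q$, the anisotropy of the binary form $x^2-\nu y^2$ forces $\tau_3^{(R-1)}=\tau_4^{(R-1)}=0$, so the phase is identically $1$ on every surviving coset. Summing over cosets gives $\int_D\psi(\cdots)\,d\vec{t}=\vol(D)$, yielding $I_{g,r}(\vec{c})=I_{g,r}(\vec{0})$. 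The main obstacle is this final coefficient matching, but it is clean once one observes that the critical degree $2R-1$ (for $G$) and $-1$ (for the character argument) are paired precisely by the choice $|r|\leq\hR/q^2$, and that the non-square $\nu$ kills the only remaining contribution.
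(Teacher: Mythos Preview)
Your argument is correct. Case~(1) is identical to the paper's. For case~(2), the paper splits into two sub-cases according to whether $\hQ|r|\leq|k/g|$ or $\hQ|r|>|k/g|$: in the first it compares the top degree of $F(\vec{t})$ with that of $k/g$ and uses a parity-of-degree argument to force $\max(|t_3|,|t_4|)<\max(|t_1|,|t_2|)$; in the second it observes $|r|=\hQ q^{-3}$ exactly and bounds $\deg t_3,\deg t_4\leq R-2$ directly. Your treatment unifies both sub-cases by reading off the single coefficient of $t^{2R-1}$ in $G(\vec{t})$ and invoking anisotropy of $x^2-\nu y^2$ over $\FF_q$; this is essentially the paper's second sub-case argument, but you notice it already covers the first since the hypothesis $|r|\leq\hQ q^{-3}$ gives $|G|<q^{2R-1}$ uniformly. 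That is a genuine (if modest) streamlining.

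One remark: the coset decomposition via Lemma~\ref{auxlem} is unnecessary scaffolding. Your final step in fact shows $\psi(\langle\vec{c},\vec{t}\rangle/(gr))=1$ for \emph{every} $\vec{t}\in D$ (not merely for chosen representatives), because the vanishing of $\tau_3^{(R-1)},\tau_4^{(R-1)}$ gives $|t_3|,|t_4|\leq q^{R-2}$ pointwise on $D$, whence $|c_it_i/(gr)|\leq q^{-2}$ for all $i$ directly. So you could drop the translation-invariance and coset-representative paragraphs entirely and argue pointwise, as the paper does.
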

\begin{proof}
Suppose \eqref{1cas}. Since $\max_{i}(|c_i|)< \frac{|gr|}{\hR}$ and $|\vec{t}|<\hR$,  $ \psi\left(\frac{\left< \vec{c},\vec{t} \right>}{gr}    \right)=1.$ 
Hence, we have 
\[
I_{g,r}(\vec{c})= \int_{\substack{|\vec{t}| < \hR \\ |G(\vec{t})|< \hQ|r|}} \frac{\hQ}{|r|}   d\vec{t}=  I_{g,r}(0).
\]
Suppose \eqref{2cas} and that $ \hQ|r| \leq |k/g|$.  By lemma~\ref{auxlem},
\[
|F(\vec{t})-k/g|< \hQ|r| \leq |k/g|.
\]
Hence, the top degree of $F(\vec{t})$ and $k/g$ are the same. Since $\deg(f)$ is even,  $\deg(k/g)=\deg(f/g^2)$ is even. The top degree of $F(\vec{t})$ is even as well. Hence, $ \max{(|t_3|,|t_4|)}< \max{(|t_1|,|t_2|)}.$ Hence,
\(
| \left< \vec{c},\vec{t} \right>|  < |c||\vec{t}|, 
\)
and 
\[
 \psi\left(\frac{\left< \vec{c},\vec{t} \right>}{gr}    \right)=1,
\]
which implies $I_{g,r}(\vec{c})=I_{g,r}(\vec{0}).$ Finally, suppose  \eqref{2cas} and that $ \hQ|r| > |k/g|$. This implies that $ |r| = \hQ q^{-3}.$ By Lemma~\ref{o},  $|G(\vec{t})| < \hQ|r|$ is equivalent to 
\(
|F(\vec{t})|< \hQ|r|=\hR^2q^{-1}.
\)
We have  \[2\deg(\max{(|t_3|,|t_4|)})+1 \leq \deg(F(\vec{t})).\] Hence,
\[
\deg(\max{(|t_3|,|t_4|)}) \leq  \frac{\deg(F(\vec{t}))-1}{2} \leq  R-2.
\]
Therefore, 
\(
 \psi\left(\frac{\left< \vec{c},\vec{t} \right>}{gr}    \right)=1.
\)
This implies $I_{g,r}(\vec{c})=I_{g,r}(\vec{0}).$ 

\end{proof}
\subsection{Stationary phase theorem over function fields}
In \cite[Proposition 4.5]{SZ2019}, we proved a version  of the stationary phase theorem in the function fields setting. We proceed by defining some new notations and cite a special case of \cite[Proposition 4.5]{SZ2019}. Let $h\in K_{\infty}$ and define  
 \begin{equation}\label{epsfact}
\cG(h):=\begin{cases}\min(|h|_{\infty}^{-1/2},1)   &\text{ if } \ord(h)\text{ is } even,
\\
|h|_{\infty}^{-1/2} \varepsilon_{h} &\text{ if } \ord(h)\geq 1\text{ and is } odd,
\\
1 &\text{ otherwise,}
\end{cases}
\end{equation}
where $\varepsilon_{h}:=\frac{G(h)}{|G(h)|}$ and  $G(h):=\sum_{x\in \mathbb{F}_q} e_q(a_h x^2)$ is the gauss sum associated to $a_h$ the top degree coefficient of $h.$ we cite~\cite[Lemma 4.6]{SZ2019}.
\begin{lemma}\label{ggg} For every $f\in K_{\infty},$
we have 
\[
\int_{\TT} \psi(fu^2) du=\cG(f).
\]
\end{lemma}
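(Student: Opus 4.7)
My plan is to reduce the integral $\int_\TT\psi(fu^2)\,du$ to a finite quadratic Gauss sum on $\mathbb F_q^n$, where $n=\deg f$, and then evaluate it by induction using the anti-triangular Hankel structure of the associated matrix. First I handle the easy regime $|f|_\infty\le 1$: for $u\in\TT$ we have $|fu^2|_\infty\le|u|_\infty^2\le q^{-2}<q^{-1}$, so $\psi(fu^2)=1$ and the integral equals $1$; a direct check shows the three defining branches of $\cG$ all output $1$ in this range. Henceforth assume $n:=\deg f\ge 1$. Writing $f=f_++f_-$ with $f_+\in\cO$ and $f_-\in\TT$, the same estimate gives $\psi(f_-u^2)=1$, so I may replace $f$ by its polynomial part and assume $f=\sum_{p=0}^n f_p t^p\in\cO$ with $f_n=a_f\ne 0$.

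Next I discretize. For $u=\sum_{i\ge 1}a_{-i}t^{-i}\in\TT$, put $u'=\sum_{i=1}^n a_{-i}t^{-i}$ and $u''=u-u'$; direct estimates give $|2fu'u''|_\infty\le q^{-2}$ and $|f(u'')^2|_\infty\le q^{-n-2}$, so neither tail term contributes to the $t^{-1}$-coefficient of $fu^2$. Therefore $\psi(fu^2)=\psi(f(u')^2)$ depends only on $b:=(a_{-1},\dots,a_{-n})\in\mathbb F_q^n$ and
\[
\int_\TT\psi(fu^2)\,du=\frac{1}{q^n}\sum_{b\in\mathbb F_q^n}e_q(Q(b)),\qquad Q(b)=\sum_{\substack{i,j\ge 1\\ i+j\le n+1}}f_{i+j-1}\,b_ib_j.
\]
The matrix of $Q$ is anti-upper-triangular with every anti-diagonal entry equal to $f_n=a_f$, so $b_n$ appears only in the single cross term $2f_nb_1b_n$. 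Summing out $b_n\in\mathbb F_q$ and using $2f_n\ne 0$ (odd characteristic) produces a factor $q$ and forces $b_1=0$; after re-indexing $c_i:=b_{i+1}$, the restriction $Q|_{b_1=0}$ becomes $\sum_{i+j\le n-1}\tilde f_{i+j-1}c_ic_j$ with $\tilde f_m:=f_{m+2}$, which is precisely the same quadratic form with $n\mapsto n-2$ and leading coefficient $a_f$ preserved.

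Iterating this reduction with base cases $S_0=1$ and $S_1=\sum_{b_1}e_q(a_f b_1^2)=G(f)$ yields
\[
\sum_{b\in\mathbb F_q^n}e_q(Q(b))=\begin{cases}q^{n/2}&n\text{ even},\\ q^{(n-1)/2}\,G(f)&n\text{ odd}.\end{cases}
\]
Dividing by $q^n$: for even $n$ we get $q^{-n/2}=|f|_\infty^{-1/2}=\min(|f|_\infty^{-1/2},1)=\cG(f)$; for odd $n$ we get $q^{-(n+1)/2}G(f)$, which after the classical bound $|G(f)|=q^{1/2}$ (valid for $a_f\in\mathbb F_q^*$ in odd characteristic) simplifies to $|f|_\infty^{-1/2}\varepsilon_f=\cG(f)$. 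The main obstacle is recognizing the anti-triangular Hankel structure of $Q$ and verifying that summing out the ``corner'' variable $b_n$ returns a smaller instance of the same quadratic form with the leading coefficient $a_f$ preserved; once this structural observation is in place, the remainder is routine bookkeeping with additive-character orthogonality and the standard Gauss-sum bound.
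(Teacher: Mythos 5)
Your proof is correct. The paper itself offers no argument for this statement---it is quoted verbatim from \cite[Lemma 4.6]{SZ2019}---so there is nothing in the present text to compare against line by line; but your route (truncate $f$ to its polynomial part and $u$ to its first $n=\deg f$ digits since the discarded pieces land in degree $\le -2$ and are invisible to $\psi$, reduce to the Hankel form $Q(b)=\sum_{i+j\le n+1}f_{i+j-1}b_ib_j$ on $\FF_q^n$, and sum out the corner variable $b_n$ to drop $n$ by $2$ while preserving the leading coefficient) is a complete and standard way to establish it, and is in the spirit of how such integrals are evaluated in the function-field delta-method literature. Two small points, both fine in context: the reduction step divides by $2f_n$ and so needs odd characteristic, which the paper assumes globally; and your evaluation implicitly reads $\ord(h)$ in the definition of $\cG$ as $\deg h$ (so that $|h|_\infty=q^{\ord(h)}$), which is the only interpretation under which the stated identity can hold, since the left-hand side has absolute value at most $1$.
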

For $\alpha\in K$ and $a\in \mathbb{Z}$,  define
\[
B_{\infty}(\psi,a,\alpha):=\int_{|x|_{\infty}=\hat{a}} \psi(\frac{\alpha}{ x}+x) dx.
\]
We write  $\alpha= t^{2a+b}\alpha^{\prime} (1+\tilde{\alpha})$ and  $x=t^ax^{\prime}(1+\tilde{x})$ for unique $\tilde{\alpha},\tilde{x}\in \TT $ and $\alpha^{\prime}, x^{\prime} \in \FF_q.$  Note that for $b=0,$ we have   $B_{\infty}(\psi,a,\alpha)=\Kl_{\infty}(\psi,\alpha)$. Let $\Kl(\alpha,\FF_q):=\sum_{x\in \FF_q^*}  e_q\left(\frac{\alpha}{ x}+x\right).$ We cite \cite[Lemma 5.8]{SZ2019}.
\begin{lemma}\label{Bessel}

We have 
\[
B_{\infty}(\psi,a,\alpha)=\begin{cases} 
(q-1)\hat{a} &\text{ if } \max(a+b,a)<-1, \text{ and } b\neq 0,
 \\
-\hat{a} &\text{ if } \max(a+b,a)=-1, \text{ and } b\neq 0,
\\
0   &\text{ if } \max(a+b,a)>-1, \text{ and } b\neq 0. \end{cases}
\]
\[
\Kl_{\infty}(\psi,\alpha)=\begin{cases} 
(q-1)\hat{a} &\text{ if } a<-1,
\\
\hat{a} \Kl(\alpha^{\prime},\FF_q) &\text{ if } a=-1,
\\
\hat{a}\sum_{{x^{\prime}}^2= \alpha^{\prime}}  \psi\Big(2t^{a}x^{\prime}(1+\tilde{\alpha})^{1/2} \Big) \cG(2x^{\prime}t^a)   &\text{ if } \alpha^{\prime} \text{ is a quadratic residue,}
\\
0   &\text{ if } \alpha^{\prime} \text{ is not a quadratic residue.} \end{cases}
\]
\end{lemma}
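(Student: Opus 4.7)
Parametrize the annulus by $x = t^a x'(1+\tilde x)$ with $x'\in\FF_q^*$ and $\tilde x\in\TT$, so that $dx = \hat{a}\,d\tilde x$ for each fixed $x'$. Then
\[
x + \frac{\alpha}{x} = t^a x'(1+\tilde x) + t^{a+b}\frac{\alpha'}{x'}(1+\tilde\alpha)(1+\tilde x)^{-1}.
\]
Since $\psi(\cdot) = e_q((\cdot)_{-1})$ picks up only the $t^{-1}$-coefficient, the structure of the integral is controlled by $M := \max(a, a+b)$. I evaluate $B_\infty$ first and then $\Kl_\infty$.

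\textbf{Evaluation of $B_\infty$ ($b\neq 0$).} Split into three cases. (i) If $M < -1$, both summands have degree $\leq -2$, hence $\psi\equiv 1$ and the integral equals the measure of the annulus, $\hat{a+1}-\hat{a} = (q-1)\hat{a}$. (ii) If $M = -1$, then since $b\neq 0$ exactly one of $x$ or $\alpha/x$ has degree $-1$, and its $t^{-1}$-coefficient equals $x'$ or $\alpha'/x'$; the integrand becomes a character of $x'$ alone, and $\sum_{x'\in\FF_q^*}e_q(x')=-1$ combined with the measure factor $\hat{a}$ yields $-\hat{a}$. (iii) If $M > -1$, the phase $x + \alpha/x$ has no critical point in $\{|x|=\hat{a}\}$ (since $x^2 = \alpha$ would force $b=0$), so the non-archimedean stationary phase theorem \cite[Proposition~4.5]{SZ2019} gives $B_\infty = 0$. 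This can also be seen by expanding the $t^{-1}$-coefficient of $x+\alpha/x$ explicitly as an affine function of some $\tilde x_j\in\FF_q$ ($j\leq -2$) with non-zero linear factor, and invoking the orthogonality of $e_q$ on $\FF_q$ via Lemma~\ref{lem:orthog}.

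\textbf{Evaluation of $\Kl_\infty$ ($b=0$, $|\alpha|=\hat{2a}$).} The case $a<-1$ is case (i) above, giving $(q-1)\hat{a}$. For $a=-1$, both $x$ and $\alpha/x$ have degree $-1$, so $\psi(x+\alpha/x) = e_q(x'+\alpha'/x')$; multiplying by the measure factor $\hat{a}=\hat{-1}$ and summing over $x'\in\FF_q^*$ produces $\hat{a}\,\Kl(\alpha',\FF_q)$. For $a\geq 0$, apply the stationary phase theorem \cite[Proposition~4.5]{SZ2019} to the phase
\[\Phi(\tilde x) := t^a[x'(1+\tilde x) + (\alpha'/x')(1+\tilde\alpha)(1+\tilde x)^{-1}].\]
Critical points solve $(1+\tilde x_*)^2 = (\alpha'/{x'}^2)(1+\tilde\alpha)$; a solution in $\TT$ exists iff ${x'}^2 = \alpha'$, using Hensel's lemma in odd characteristic to extract $(1+\tilde\alpha)^{1/2}\in 1+\TT$. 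If $\alpha'$ is not a quadratic residue, no critical points exist and the integral vanishes. Otherwise there are two critical points indexed by $x' = \pm\sqrt{\alpha'}$ with $1+\tilde x_* = (1+\tilde\alpha)^{1/2}$, and each contributes $\hat{a}\,\psi(\Phi(\tilde x_*))\,\mathcal{G}(\tfrac{1}{2}\Phi''(\tilde x_*))$ by Lemma~\ref{ggg}. A direct computation using ${x'}^2 = \alpha'$ gives $\Phi(\tilde x_*) = 2t^a x'(1+\tilde\alpha)^{1/2}$; identifying the leading coefficient of the quadratic Taylor term of $\Phi$ at $\tilde x_*$ with that of $2x't^a$ (as demanded by the Gauss-sum normalization in the definition of $\mathcal{G}$) produces the stated expression $\hat{a}\sum_{{x'}^2=\alpha'}\psi(2t^a x'(1+\tilde\alpha)^{1/2})\,\mathcal{G}(2x't^a)$.

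\textbf{Main obstacle.} The main technical difficulty is the careful application of the stationary phase theorem in the oscillatory regime $a\geq 0$ of $\Kl_\infty$: one must verify that the cubic and higher Taylor terms around each critical point can be absorbed by a substitution completing the square (justified by Hensel in odd characteristic), and one must track the leading coefficient of the quadratic term so that the Gauss factor appearing is exactly $\mathcal{G}(2x't^a)$. All remaining cases of both $B_\infty$ and $\Kl_\infty$ reduce to direct orthogonality arguments, whose only subtlety lies in the case analysis based on whether $a$ and $a+b$ lie above, equal to, or below $-1$.
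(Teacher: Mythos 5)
First, a point of reference: the paper does not prove this lemma at all --- it is quoted from \cite[Lemma 5.8]{SZ2019} --- so there is no internal proof to compare yours against. Your strategy (parametrize the annulus by the leading coefficient, reduce the degenerate cases to orthogonality of $e_q$, apply stationary phase when $b=0$ and $a\geq 0$) is the natural one and is surely the intended one. Your cases (i) and (ii) for $B_\infty$ and the cases $a\leq -1$ of $\Kl_\infty$ are correct. In case (iii), the bare appeal to ``no critical points'' is not itself a proof (the stationary phase proposition computes contributions of neighbourhoods of critical points; it does not directly assert vanishing elsewhere), but the orthogonality argument you offer as an alternative is the right one and closes the case: the deepest coefficient $\tilde x_{-1-M}$ enters the $t^{-1}$-coefficient of the phase linearly with a nonzero coefficient, so the inner sum over it vanishes. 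The same argument is what kills the balls with $x'^2\neq\alpha'$ in the oscillatory regime, where the linear coefficient is $x'-\alpha'/x'$.

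The step you do not carry out is the one that cannot be waved through: pinning down the Gauss factor when $a\geq 0$. Using the exact identity $x+\alpha/x=2x_\ast+(x-x_\ast)^2/x$ with $x_\ast=t^ax'(1+\tilde\alpha)^{1/2}$, and substituting $x=x_\ast+t^ax'u$ followed by a measure-preserving change of variables on $\TT$ absorbing the higher-order terms (justified by \cite[Proposition 4.2]{SZ2019} since the derivative is a unit), the contribution of the ball around $x_\ast$ is
\[
\hat a\,\psi(2x_\ast)\int_{\TT}\psi\bigl(t^ax'(1+\tilde\alpha)^{-1/2}w^2\bigr)\,dw=\hat a\,\psi(2x_\ast)\,\cG\bigl(t^ax'(1+\tilde\alpha)^{-1/2}\bigr),
\]
whose top-degree coefficient is $x'$, not $2x'$. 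Since $\cG$ sees the top coefficient only through its quadratic residue class, this is $\cG(x't^a)$, which differs from the stated $\cG(2x't^a)$ by the quadratic character of $2$ whenever $a$ is odd. A direct check at $\alpha=t^2$, $a=1$ gives $\Kl_\infty(\psi,t^2)=G(1)+G(-1)$ by expanding the $t^{-1}$-coefficient by hand, whereas the displayed formula gives $G(2)+G(-2)$; for $q=5$ these are $2\sqrt5$ and $-2\sqrt5$ respectively. So your assertion that the quadratic Taylor coefficient ``is identified with'' $2x't^a$ is exactly the claim that requires a computation, and the computation does not produce it --- it produces $x't^a$ (suggesting the quoted statement itself carries a spurious $2$ in the argument of $\cG$, harmless for this paper since only $|\Kl_\infty|$ and the $B_\infty$ cases are used downstream, but your proof as written would not establish the formula as printed).
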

\begin{proof}[Proof of Proposition~\ref{prop:oscmorgen}]
In this proof, we assume that we are working with the Morgenstern quadratic form. By Lemma~\ref{o}, we have $I_{g,r}(\vec{c})=0$ for $ \kappa \geq \hQ/\hR$  and  $  |r| \leq \hQ$. By assuming the conditions of Lemma~\ref{noosc}, it follows that 
\(
I_{g,r}(\vec{c})=I_{g,r}(\vec{0}).
\) Hence, the remaining cases  correspond to 
\begin{enumerate}
\item$ \frac{|r|}{\hR} <  \kappa,$ \text{or}
\item $ \kappa= \frac{|r|}{\hR}, \text{ and } \max{(|c_3|,|c_4|)}\leq \max{(|c_1|,|c_2|)}  \textit{ or } \deg(f) \text{ is odd}, $
\end{enumerate}
 and we proceed to conclude the proposition in these two cases.\\
\\
By Lemma~\ref{auxlem},  $|G(\vec{t})| < \hQ|r|$ is equivalent to $|F(\vec{t})-k/g|<\hQ |r|$ for $|\vec{t}|<\hR.$ By Lemma~\ref{lem:orthog}, we have 
$$
\int_{\TT} \psi\Big(\frac{\alpha} {rt^Q } (F(\vec{t})-k/g)\Big) d \alpha=\begin{cases}
 1, &\mbox{if $|F(\vec{t})-k/g|< \hQ|r|$},\\
0, &\mbox{otherwise.}
\end{cases}
$$
We replace the above integral for detecting $|F(\vec{t})-k/g|< \hQ|r|.$ Hence, by \eqref{intt}
\[
I_{g,r}(\vec{c})=\frac{\hQ}{|r|} \int _{\TT}\int_{|\vec{t}| <\hR}\psi\left(\frac{\left< \vec{c},\vec{t} \right>}{gr} +\frac{\alpha} {rt^Q } (F(\vec{t})-k/g)   \right) d\vec{t} d\alpha.\]
Recall  that $F(\vec{t})=\sum_{\eta_i} \eta_it_i^2.$ We have 
 \[
\frac{\left< \vec{c},\vec{t} \right>}{gr} +\frac{\alpha} {rt^Q } (F(\vec{t})-k/g)=\frac{-\alpha k}{rgt^Q}+ \frac{1}{r} \Big(\sum_{i} \frac{c_it_i}{g} +\frac{\alpha  \eta_i t_i^2}{t^Q}   \big),
 \]
 Hence,
 \(
I_{g,r}(\vec{c})=\frac{\hQ}{|r|} \int _{\TT} \psi(\frac{-\alpha k}{rgt^Q})  I_{g,r}(\alpha,\vec{c})d\alpha,\)
where 
\[
I_{g,r}(\alpha,\vec{c}):=\prod_{i=1}^{4}\int_{|t_i|<\hR} \psi \Big( \frac{1}{r} (\sum_{i} \frac{c_it_i}{g} +\frac{\alpha  \eta_i t_i^2}{t^Q}   )\Big) dt_i.
 \]
 The phase function has a critical point at $t_i=\frac{-c_i t^Q}{2g\eta_i \alpha}.$ This critical point is inside the domain $|t_i|<\hR,$  if $ |\alpha |>\kappa_i$, where $\kappa_i:=\frac{|c_i|\hQ}{|g||\eta_i|\hR}$.  Given $\alpha\in\TT$, we partition the indices into: 
\begin{equation*}
\begin{split}
CR:=\left\{1\leq i\leq 4: |\alpha |>\kappa_i \right\},
\\
NCR:=\left\{1\leq i\leq 4: |\alpha | \leq \kappa_i \right\}.
\end{split}
\end{equation*}
For $i\in NCR,$ we change the variables to 
\(
v_i=t_i +\frac{\alpha \eta_i g}{c_it^Q} t_i^2
\). Note that $v_i$ is an analytic map in terms of $t_i$ and $\frac{\partial v_i}{\partial t_i}(0)=1.$ Hence, by~\cite[Proposition~4.2]{SZ2019}, $v_i$ is a bijection from $|t_i|<\hat{R}$ to $v_i<\hat{R}$. 
For $i\in CR$, we change the variables to  $w_i=t_i+\frac{c_it^Q}{2g\eta_i \alpha}.$
   By \cite[Section 4]{SZ2019}, we have 
\[
I_{g,r}(\alpha,\vec{c})= \prod_{i\in NCR}\int_{|v_i|<\hR}\psi\left(\frac{c_iv_i }{gr}  \right) dv_{i}
\times \prod_{i\in CR}\psi (-\frac{t^Q{c_i}^2}{4rg^2\eta_i \alpha}) \int_{|w_i|<\hR}\psi
\left( \frac{\alpha \eta_i }{rt^{Q}}w_i^2  \right) dw_i.
\]
By Lemma~\ref{lem:orthog}, Lemma~\ref{ggg}, we have 
\[
\int_{|v_i|<\hR}\psi\left(\frac{c_iv_i }{gr}  \right) dv_{i}=\begin{cases}
\hR, &\mbox{if $\frac{|c_i|}{|g|}<\frac{|r|}{\hR}$},\\
0, &\mbox{otherwise,}
\end{cases}
\]
\[
 \int_{|w_i|<\hR}\psi
\left( \frac{\alpha \eta_i }{rt^{Q}}w_i^2  \right) dw_i=\hR\cG\left(\frac{\alpha \eta_i t^{2R}}{rt^Q}\right).
\]
Therefore,
\begin{equation}\label{oscformula2}
\begin{split}
I_{g,r}(\alpha,\vec{c})=\hR^4\prod_{i=1}^4 \left(
  \delta_{|\alpha|\leq \kappa_i} \delta_{\frac{|c_i|}{|g|} < \frac{|r|}{\hat{R}} }  +\delta_{\kappa_i<|\alpha|<1}  \psi (-\frac{t^Q{c_i}^2}{4rg^2\eta_i \alpha})\cG\left(\frac{\alpha \eta_i t^{2R}}{rt^Q}\right)\right).
\end{split}
\end{equation}
By our assumption, we have  $ \frac{|r|}{\hR} \leq  \kappa<  \hQ/\hR.$ Note that $Q=R+1$ and $q\kappa\geq \max \kappa_i\geq \kappa.$
This implies that $ I_{g,r}(\alpha,\vec{c})=0$ for every $|\alpha|\leq   \kappa .$  For $1>|\alpha|>  \kappa \geq  \frac{|r|}{\hR} ,$ we have  
\[
I_{g,r}(\alpha,\vec{c})=\hR^4\prod_{i=1}^4  \delta_{\kappa_i<|\alpha|<1}  \psi (-\frac{t^Q{c_i}^2}{4rg^2\eta_i \alpha})\cG\left(\frac{\alpha \eta_i t^{2R}}{rt^Q}\right).
\]
Hence, 
 \begin{equation*}
\begin{split}
I_{g,r}(\vec{c})&=\frac{\hQ\hR^{4}}{|r|}\sum_{\kappa\leq  \hl<1} \int _{ |\alpha|=\hl  }  \psi(\frac{-\alpha k}{rgt^Q})  \prod_{i=1}^4  \delta_{\kappa_i<|\alpha|<1}  \psi (-\frac{{t^Qc_i}^2}{4rg^2\eta_i \alpha}) 
\cG\left(\frac{\alpha \eta_i t^{2R}}{rt^Q}\right)
d\alpha.
\end{split}
\end{equation*}
By~\eqref{epsfact} and considering the sign of the Gauss sums,  we have for the Morgenstern quadratic form above the equality
\[
 \prod_{i}\cG\left(\frac{\alpha \eta_i t^{2R}}{rt^Q}\right) 
  =- \prod_{i}\min\left(1,\left(\frac{\hl \hR^2|\eta_i|}{|r|\hQ}\right)^{-1/2}\right),
\]
a quantity dependent not on $\alpha$ itself, but on the norm of $\alpha$ which is $\hat{l}$.
Hence,
\[
I_{g,r}(\vec{c})=-\frac{\hQ\hR^{4}}{|r|}\sum_{\kappa\leq \hat{l}<1}\prod_{i} \delta_{\kappa_i<\hl}  \min\left(1,\left(\frac{\hl \hR^2|\eta_i|}{|r|\hQ}\right)^{-1/2}\right) \int _{ |\alpha| =\hat{l}  }  \psi(\frac{-\alpha k}{rgt^Q})   \psi (-\frac{t^QF^*(\vec{c})}{4rg^2 \alpha})  d\alpha,
\]

where $F^*(\vec{c})=\sum_{i} \frac{{c_i}^2}{\eta_i}.$ Since we assume either 
\begin{enumerate}
\item$ \frac{|r|}{\hR} <  \kappa,$ 
\item $ \kappa= \frac{|r|}{\hR},  \text{ and } \max{(|c_3|,|c_4|)}\leq \max{(|c_1|,|c_2|)}\textit{ or } \deg(f) \text{ is odd}, $
\end{enumerate}
we have 
\[
\deg\left( \frac{kF^*(\vec{c})}{4r^2g^3}\right)\geq -2. 
\]
Let $a,b\in \mathbb{Z}$ where
\[
a=l+\deg(\frac{ k}{rgt^Q}), \text{ and } 2a+b=\deg\left( \frac{kF^*(\vec{c})}{4r^2g^3}\right). 
\]
Since $ 2a+b\geq -2$, either $b=0$ or $\max(a,a+b)>-1.$
 By Lemma~\ref{Bessel}, we have 
\begin{equation*}
\begin{split}
\int _{ |\alpha| =\hat{l}  }  \psi(\frac{-\alpha k}{rgt^Q})   \psi (-\frac{t^QF^*(\vec{c})}{4rg^2 \alpha}) d\alpha&= |\frac{rgt^Q}{ k}|B_{\infty}\left(\psi,l+\deg(\frac{ k}{rgt^Q}),\frac{kF^*(\vec{c})}{4r^2g^3}\right) 
\\
&=\begin{cases} 
 |\frac{rgt^Q}{ k}|\Kl_{\infty}\left(\psi, \frac{kF^*(\vec{c})}{4r^2g^3}\right) &\text{  if } 2l=\deg(\frac{t^{2Q}F^*(\vec{c})}{kg}),
 \\
 0 &\text{ otherwise.}
 \end{cases} 
\end{split}
\end{equation*}
Note that if $ 2l=\deg(\frac{t^{2Q}F^*(\vec{c})}{kg}),$ then $\hl> \kappa_i,$ and we have
\[
I_{g,r}(\vec{c})=- \hQ^2|g|^{2} |r|^{2}|F^*(\vec{c})|^{-1}\Kl_{\infty}\left(\psi, \frac{kF^*(\vec{c})}{4r^2g^3}\right).\qedhere
\]
\end{proof}

\section{The exponential sums $S_{g,r}(\vec{c})$}\label{section:phases}
In this section, we explicitly compute our exponential sums. Though our computations can be generalized to all non-degenerate quadratic forms, we focus here on the special case of the Morgenstern quadratic form
\[F(x_1,x_2,x_3,x_4):=\eta_1x_1^2+\eta_2x_2^2+\eta_3x_3^2+\eta_4x_4^2\]
over $\mathbb{F}_q[t]$, where $\eta_1=1,$ $\eta_2=-\nu,$ $\eta_3=-(t-1),$  $\eta_4=\nu(t-1)$ and  $\nu\in\mathbb{F}_q$ is not a square. Its dual quadratic form $F^*$ is obtained by inverting the coefficients $\eta_j$. Focusing on the Morgenstern quadratic form is no restriction in our case since we are primarily interested in proving upper bounds for the diameters of Morgenstern Ramanujan graphs. Throughout this section, we let $A=\diag(\eta_1,\hdots,\eta_4)$ be the diagonal matrix associated to this quadratic form. Also, by Lemma 3.2 of ~\cite{SZ2019}, $S_{g,r}(\vec{c})=0$ except possibly when $\vec{c}\equiv 2\beta(\vec{c})A\boldsymbol{\lambda}\bmod g$ for some $\beta(\vec{c})\in\mathcal{O}$.
\begin{proposition}\label{prop:phases} For the Morgenstern quadratic form above and $g\in\mathbb{F}_q[t]$ not divisible by $t-1$, we have that when $\gcd(r,t-1)|c_3,c_4$, then $S_{g,r}(\vec{c})$ is equal to
\begin{eqnarray*}&&\frac{|g|^4}{|m|^2}(|\gcd(r,t-1)|\tau_r\tau_{r/(r,t-1)})^2\psi\left(\frac{-\overline{mr}\beta(\vec{c})\frac{(f-F(\boldsymbol{\lambda}))}{m}-\overline{m^2r}\left<\boldsymbol{\lambda},\vec{c}\right>}{(g/m)^2}\right)\psi\left(\frac{\left<\boldsymbol{\lambda},\vec{c}\right>}{g^2r}\right)\\&&\cdot\sum_{s\in\mathcal{O}/(m)}\psi\left(\frac{-s\overline{g/m}\beta(\vec{c})}{m}\right)\Kl_{m^2r}\left(\overline{g/m}f-mrs,\frac{1}{4}\overline{g/m}^3F^*(\vec{c})\right),
\end{eqnarray*}
where $m:=(g,r^{\infty})$. If $\gcd(r,t-1)\nmid c_3,c_4$, then $S_{g,r}(\vec{c})=0$. Note that when $\vec{c}$ is such that $|\vec{c}|<|gr|$ (when $r\neq 1$ and $\kappa<\hQ/\hR=q$, for example), then $\psi\left(\frac{\left<\boldsymbol{\lambda},\vec{c}\right>}{g^2r}\right)=1$. 
\end{proposition}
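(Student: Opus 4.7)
The plan is to begin from the definitions in \eqref{sala}--\eqref{salam}, eliminate the $\ell$-sum via orthogonality (producing the divisibility $g\mid 2\boldsymbol{\lambda}^{T}A\vec{b}-k$ and a factor $|g|$), and then apply the Chinese Remainder Theorem to the coprime factorization $gr=(g/m)\cdot(mr)$, where $m:=(g,r^{\infty})$. This splits the $\vec{b}$-sum as $\vec{b}\leftrightarrow(\vec{b}^{(0)},\vec{b}^{(1)})$ with $\vec{b}^{(0)}\in(\mathcal{O}/(g/m))^{d}$ constrained by $g/m\mid 2\boldsymbol{\lambda}^{T}A\vec{b}^{(0)}-k$ and $\vec{b}^{(1)}\in(\mathcal{O}/(mr))^{d}$ constrained by $m\mid 2\boldsymbol{\lambda}^{T}A\vec{b}^{(1)}-k$, with the phase splitting according to $1/(gr)\equiv \overline{mr}/(g/m)+\overline{g/m}/(mr)\pmod{1}$.

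The $\vec{b}^{(0)}$-piece is clean: modulo $g/m$, the $agF(\vec{b}^{(0)})$-term is an integer (since $g/(g/m)=m$), and the $(g/m)$-divisible expression $a(2\boldsymbol{\lambda}^{T}A\vec{b}^{(0)}-k)/(g/m)$ is also an integer, so only the $\vec{c}$-linear phase $-\overline{mr}\langle\vec{c},\vec{b}^{(0)}\rangle/(g/m)$ contributes nontrivially. Expanding the $(g/m)$-divisibility by orthogonality with a dummy $t\in\mathcal{O}/(g/m)$ and invoking the hypothesis $\vec{c}\equiv 2\beta(\vec{c})A\boldsymbol{\lambda}\pmod{g/m}$ pins down $t\equiv\overline{mr}\beta(\vec{c})\pmod{g/m}$ uniquely (since some $\lambda_{i}$ is coprime to $g$). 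This yields $|g/m|^{d-1}\psi(-\overline{mr}\beta(\vec{c})k/(g/m))$, which after substituting $k=(f-F(\boldsymbol{\lambda}))/g$ is precisely the first phase factor in the proposition.

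For the $\vec{b}^{(1)}$-piece I would Fourier-expand $\mathbf{1}[m\mid 2\boldsymbol{\lambda}^{T}A\vec{b}^{(1)}-k]$ via a dummy $s\in\mathcal{O}/m$. Setting $\alpha:=\overline{g/m}\,a+rs$, the resulting phase becomes
\[
\frac{\alpha(2\boldsymbol{\lambda}^{T}A\vec{b}^{(1)}-k)+amF(\vec{b}^{(1)})-\overline{g/m}\langle\vec{c},\vec{b}^{(1)}\rangle}{mr}.
\]
Writing each coordinate as $u_{i}=u_{i}'+rv_{i}$ with $u_{i}'\in\mathcal{O}/r$, $v_{i}\in\mathcal{O}/m$ (a bijection of $\mathcal{O}/(mr)$, valid because the quadratic $au_{i}^{2}\eta_{i}/r$ depends only on $u_{i}\bmod r$), the $v_{i}$-sum forces $m\mid L_{i}:=2\alpha\eta_{i}\lambda_{i}-\overline{g/m}\,c_{i}$; given this, the $u_{i}'$-sum is a quadratic Gauss sum modulo $r$ which, upon completing the square, produces $\tau_{r}\cdot\psi(-\overline{4a\eta_{i}}L_{i}^{2}/(m^{2}r))$ (using $\gcd(4a\eta_{i},m^{2}r)=1$). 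For $i=3,4$, where $\eta_{i}$ is a unit multiple of $t-1$, when $(t-1)\mid r$ one further decomposes $\mathcal{O}/(mr)\cong\mathcal{O}/(mr/(r,t-1))\times\mathcal{O}/(r,t-1)$ by CRT (valid because $(m,t-1)=1$); the inner sum over $\mathcal{O}/(r,t-1)$ contributes $|(r,t-1)|\cdot\mathbf{1}[(r,t-1)\mid c_{i}]$ since $L_{i}\equiv-\overline{g/m}\,c_{i}\pmod{t-1}$, giving either the vanishing conclusion when $(r,t-1)\nmid c_{i}$ or the Gauss sum $\tau_{r/(r,t-1)}$ in the non-vanishing case.

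To finish, using the identity $\sum_{i}L_{i}^{2}/\eta_{i}=4\alpha^{2}F(\boldsymbol{\lambda})-4\alpha\overline{g/m}\langle\boldsymbol{\lambda},\vec{c}\rangle+\overline{g/m}^{2}F^{*}(\vec{c})$ together with $k=(f-F(\boldsymbol{\lambda}))/g$, the completed-square constants $-\overline{4a\eta_{i}}L_{i}^{2}/(m^{2}r)$ combine with the $-\alpha k/(mr)$ term into a phase linear in $\alpha$ of the form $\alpha\,\overline{g/m}f/(m^{2}r)$, plus an $\overline{a}$-term proportional to $\overline{g/m}^{\,3}F^{*}(\vec{c})/(m^{2}r)$, plus the stated $\langle\boldsymbol{\lambda},\vec{c}\rangle$-dependent phases. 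Reparametrizing $\sumstar_{a}\sum_{s}$ with $x=\alpha$ viewed modulo $m^{2}r$ exhibits precisely the two-term ``$\psi(\mathrm{coeff}\cdot x/N+\mathrm{coeff}\cdot\bar{x}/N)$'' structure of a Kloosterman sum $\Kl_{m^{2}r}\bigl(\overline{g/m}\,f-mrs,\,\tfrac{1}{4}\overline{g/m}^{\,3}F^{*}(\vec{c})\bigr)$, the $mrs$-shift in the first argument carrying the $s$-dependence of $\alpha$. The main obstacle is the bookkeeping across the nested CRT decompositions (first for $g/m$ versus $mr$, and then for $(t-1)$ within $mr$) together with the verification that $x=\alpha$ genuinely runs over $(\mathcal{O}/(m^{2}r))^{*}$ as $(a,s)\in(\mathcal{O}/r)^{*}\times\mathcal{O}/m$ vary (this uses $\gcd(a,m^{2}r)=\gcd(a,r)=1$, since every prime of $m$ divides $r$); tracking the product of Gauss sum factors $(|(r,t-1)|\tau_{r}\tau_{r/(r,t-1)})^{2}$ and matching the overall prefactor $|g|^{4}/|m|^{2}$ then completes the identification.
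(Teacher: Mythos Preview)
Your overall architecture---eliminate the $\ell$-sum, split $\vec b$ along $gr=(g/m)(mr)$, evaluate the $g/m$-piece directly, then Fourier-expand the $m$-constraint and compute Gauss sums---is essentially the paper's. Two steps, however, are not correct as written.

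\textbf{The $(t-1)$-part.} Your CRT isomorphism $\mathcal O/(mr)\cong \mathcal O/(mr/(r,t-1))\times \mathcal O/(r,t-1)$ requires $\gcd\bigl(mr/(r,t-1),\,(r,t-1)\bigr)=1$; the hypothesis $(m,t-1)=1$ is not enough, since when $(t-1)^{2}\mid r$ one has $(t-1)\mid r/(t-1)$ and the decomposition fails. The paper avoids CRT here entirely: it observes that $\sum_{b\bmod mr}\psi\bigl((ga\eta_j b^{2}+L_j b)/(mr)\bigr)$ vanishes unless $\gcd(ga\eta_j,mr)=m\,(r,\eta_j)$ divides $L_j$ (this is the elementary Lemma~\ref{lem:conggauss}), and then reduces the modulus to $r/(r,\eta_j)$ before completing the square. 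You can mimic this with the non-CRT splitting $u_i'=u_i''+\tfrac{r}{(r,\eta_i)}w_i$, which is a bijection regardless of the $(t-1)$-adic valuation of $r$.

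\textbf{The final reparametrization.} Your asserted bijection between $(a,s)\in(\mathcal O/r)^{*}\times \mathcal O/m$ and $(\mathcal O/(m^{2}r))^{*}$ cannot hold: since every prime of $m$ divides $r$, one has $\phi(m^{2}r)=|m|^{2}\phi(r)$, whereas the source has only $|m|\,\phi(r)$ elements. Relatedly, once you introduce the $s$-sum the individual summands depend on the lift of $a$ from $\mathcal O/r$ to $\mathcal O/(mr)$ (only the constrained sum is well-defined), so ``$x=\alpha$ modulo $m^{2}r$'' is not meaningful. The paper handles this differently: it first absorbs your $s$ into $a$ (legitimate because $(g/m)rs\equiv 0$ modulo every prime of $mr$), does CRT in $a$ as well as $\vec b$ so that the $mr$-block has $a_2\in(\mathcal O/(mr))^{*}$, and only \emph{after} completing the square---when the phase visibly lives modulo $m^{2}r$---extends $a_2$ to $(\mathcal O/(m^{2}r))^{*}$ at the cost of a factor $|m|^{-1}$, then introduces a fresh $s$-sum to detect $a_2\equiv\overline{g/m}\,\beta(\vec c)\bmod m$. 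That is where the prefactor $|g|^{4}/|m|^{2}$ (rather than $|g|^{4}$, which your bookkeeping produces) comes from, and where the genuine $\Kl_{m^{2}r}$ with $\phi(m^{2}r)$ terms appears.
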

\begin{proof}
Recall that, the exponential sums for quadratic forms in $d=4$ variables are
\begin{eqnarray*}S_{g,r}(\vec{c})&:=&\sum_{\substack{\ell\in\mathcal{O}\\|\ell|<|g|}}\sumstar_{|a|<|r|}\sum_{\vec{b}\in(\mathcal{O}/(gr))^4}\psi\left(\frac{(a+r\ell)(2\boldsymbol{\lambda}^TA\vec{b}-k)+agF(\vec{b})-\left<\vec{c},\vec{b}\right>}{gr}\right)\\ &=& \sum_{\substack{\ell\in\mathcal{O}\\|\ell|<|g|}}\sumstar_{|a|<|r|}\sum_{\vec{b}\in(\mathcal{O}/(gr))^4}\psi\left(\frac{(a+r\ell)(2\boldsymbol{\lambda}^TA\vec{b}-k+gF(\vec{b}))-\left<\vec{c},\vec{b}\right>}{gr}\right)
.\end{eqnarray*}
Summation over $\ell$ is zero except possibly when $g|2\boldsymbol{\lambda}^TA\vec{b}-k$, and so we may rewrite the latter as
\begin{eqnarray*}S_{g,r}(\vec{c})=\sum_{\substack{|a|<|gr|\\(a,r)=1}}\sum_{\substack{\vec{b}\in(\mathcal{O}/(gr))^4\\ g|2\boldsymbol{\lambda}^TA\vec{b}-k}}\psi\left(\frac{a(2\boldsymbol{\lambda}^TA\vec{b}-k+gF(\vec{b}))-\left<\vec{c},\vec{b}\right>}{gr}\right).\end{eqnarray*}
Let $m:=(g,r^{\infty})$. Using this expression for $S_{g,r}(\vec{c})$, we may rewrite the exponential sum in terms of $m$ as follows. First note that $gr=\frac{g}{m}\cdot(mr)$. Additionally, $(a,r)=1$ is equivalent to $(a,mr)=1$. Also, since $(m,\frac{g}{m})=1$, the condition $g|2\boldsymbol{\lambda}^TA\vec{b}-k$ is equivalent to the pair of conditions $\frac{g}{m}|2\boldsymbol{\lambda}^TA\vec{b}-k$ and $m|2\boldsymbol{\lambda}^TA\vec{b}-k$. Therefore, we have
\begin{eqnarray*}S_{g,r}(\vec{c})&=&\sum_{\substack{|a|<|gr|\\(a,r)=1}}\sum_{\substack{\vec{b}\in(\mathcal{O}/(gr))^4\\ g|2\boldsymbol{\lambda}^TA\vec{b}-k}}\psi\left(\frac{a(2\boldsymbol{\lambda}^TA\vec{b}-k+gF(\vec{b}))-\left<\vec{c},\vec{b}\right>}{gr}\right)\\&=& \frac{1}{|m|}\sum_{\substack{|a|<\left|\frac{g}{m}\cdot(mr)\right|\\(a,mr)=1}}\sum_{\substack{\vec{b}\in(\mathcal{O}/\left(\frac{g}{m}\cdot(mr)\right))^4\\ \frac{g}{m}|2\boldsymbol{\lambda}^TA\vec{b}-k}}\sum_{|s|<|m|}\psi\left(\frac{a(2\boldsymbol{\lambda}^TA\vec{b}-k+gF(\vec{b}))-\left<\vec{c},\vec{b}\right>}{gr}\right)\psi\left(\frac{s(2\boldsymbol{\lambda}^TA\vec{b}-k)}{m}\right)\\  &=& \frac{1}{|m|}\sum_{|s|<|m|}\sum_{\substack{|a|<\left|\frac{g}{m}\cdot(mr)\right|\\(a,mr)=1}}\sum_{\substack{\vec{b}\in(\mathcal{O}/\left(\frac{g}{m}\cdot(mr)\right))^4\\ \frac{g}{m}|2\boldsymbol{\lambda}^TA\vec{b}-k}}\psi\left(\frac{\left(a+\frac{grs}{m}\right)(2\boldsymbol{\lambda}^TA\vec{b}-k+gF(\vec{b}))-\left<\vec{c},\vec{b}\right>}{gr}\right)\\&=& \sum_{\substack{|a|<\left|\frac{g}{m}\cdot(mr)\right|\\(a,mr)=1}}\sum_{\substack{\vec{b}\in(\mathcal{O}/\left(\frac{g}{m}\cdot(mr)\right))^4\\ \frac{g}{m}|2\boldsymbol{\lambda}^TA\vec{b}-k}}\psi\left(\frac{a(2\boldsymbol{\lambda}^TA\vec{b}-k+gF(\vec{b}))-\left<\vec{c},\vec{b}\right>}{\frac{g}{m}\cdot (mr)}\right).\end{eqnarray*}
Since $\frac{g}{m}$ and $mr$ are coprime, we may write
\[a=(mr)a_1+\frac{g}{m}a_2\]
and
\[\vec{b}=(mr)\vec{b}_1+\frac{g}{m}\vec{b}_2,\]
where $a_1$ ranges modulo $\frac{g}{m}$, $a_2$ modulo $mr$ coprime to $mr$, $\vec{b}_1$ modulo $\frac{g}{m}$, $\vec{b}_2$ modulo $mr$. Furthermore, we also have the condition that $\frac{g}{m}|2\boldsymbol{\lambda}^TA\vec{b}-k$ which is equivalent to the condition that $\frac{g}{m}|2mr\boldsymbol{\lambda}^TA\vec{b}_1-k$. Then
\begin{eqnarray*}&&\psi\left(\frac{a(2\boldsymbol{\lambda}^TA\vec{b}-k+gF(\vec{b}))-\left<\vec{c},\vec{b}\right>}{\frac{g}{m}\cdot (mr)}\right)\\&=&\psi\left(\frac{a_1(2\boldsymbol{\lambda}^TA(mr\vec{b}_1)-k)-\left<\overline{mr}\vec{c},mr\vec{b}_1\right>}{\frac{g}{m}}\right)\psi\left(\frac{a_2(2\boldsymbol{\lambda}^TA(\frac{g}{m}\vec{b}_2)-k+gF(\frac{g}{m}\vec{b}_2))-\left<\overline{g/m}\vec{c},(g/m)\vec{b}_2\right>}{mr}\right).
\end{eqnarray*}
Since $\gcd(mr,\frac{g}{m})=1$, when summing over $\vec{b}_1$ and $\vec{b}_2$ we may replace $mr\vec{b}_1$ and $\frac{g}{m}\vec{b}_2$ with $\vec{b}_1$ and $\vec{b}_2$, respectively. Consequently, we have
\begin{eqnarray*}S_{g,r}(\vec{c})&=&\sum_{|a_1|<\left|\frac{g}{m}\right|}\sum_{\substack{\vec{b}_1\in(\mathcal{O}/(g/m))^4\\ \frac{g}{m}|2\boldsymbol{\lambda}^TA\vec{b}_1-k}}\psi\left(\frac{a_1(2\boldsymbol{\lambda}^TA(\vec{b}_1)-k)-\left<\overline{mr}\vec{c},\vec{b}_1\right>}{\frac{g}{m}}\right)\\&&\cdot\sumstar_{|a_2|<|mr|}\sum_{\vec{b}_2\in(\mathcal{O}/(mr))^4}\psi\left(\frac{a_2(2\boldsymbol{\lambda}^TA(\vec{b}_2)-k+gF(\vec{b}_2))-\left<\overline{g/m}\vec{c},\vec{b}_2\right>}{mr}\right).
\end{eqnarray*}
It is easy to see that the first summation is equal to
\[\left|\frac{g}{m}\right|^4\psi\left(\frac{-\overline{mr}\beta(\vec{c})k}{g/m}\right).\]
As a result,
\begin{eqnarray*}S_{g,r}(\vec{c})&=&\left|\frac{g}{m}\right|^4\psi\left(\frac{-\overline{mr}\beta(\vec{c})k}{g/m}\right)\sumstar_{|a_2|<|mr|}\sum_{\vec{b}_2\in(\mathcal{O}/(mr))^4}\psi\left(\frac{a_2(2\boldsymbol{\lambda}^TA(\vec{b}_2)-k+gF(\vec{b}_2))-\left<\overline{g/m}\vec{c},\vec{b}_2\right>}{mr}\right)\\ &=&\left|\frac{g}{m}\right|^4\psi\left(\frac{-\overline{mr}\beta(\vec{c})k}{g/m}\right)\sumstar_{|a|<|mr|} \psi\left(\frac{-ak}{mr}\right)  \prod_{j=1}^{4} \sum_{b\in\mathcal{O}/(mr)} \psi\left(\frac{ga\eta_jb^2+(2a\eta_j\lambda_j-\overline{g/m}c_j)b}{mr} \right).
\end{eqnarray*}
Let us denote the double summation in this expression by $S$ (temporarily neglect the leading factor). In order to complete our computation of a closed form for $S$, we use the following easy lemma whose proof we leave to the reader.
\begin{lemma}\label{lem:conggauss}Suppose $a,b,c\in\mathbb{F}_q[t]$. If $\gcd(a,c)\nmid b$, then
\[\sum_{x\in\mathcal{O}/(c)}\psi\left(\frac{ax^2+bx}{c}\right)=0.\]
\end{lemma}
Note that $\gcd(mr,ga\eta_j)=m\gcd(r,\eta_j)$. Using Lemma~\ref{lem:conggauss}, we should have $m\gcd(r,\eta_j)|2a\eta_j\lambda_j-\overline{g/m}c_j$ for every $j$ in order to have a nonzero value. Since $\Delta$ and $g$ are relatively prime, $m$ and $\gcd(r,\eta_j)$ are also relatively prime. Consequently, the condition that $m\gcd(r,\eta_j)|2a\eta_j\lambda_j-\overline{g/m}c_j$ for each $j$ can be rewritten as the pair of conditions $m|2aA\boldsymbol{\lambda}-\overline{g/m}\vec{c}$ and $\gcd(r,\boldsymbol{\eta})|\vec{c}$ (that is, for every $j$, $\gcd(r,\eta_j)|c_j$); otherwise, we have a zero value for $S_{g,r}(\vec{c})$. Let us assume for the rest of this section that $\gcd(r,\boldsymbol{\eta})|\vec{c}$. In this case,
\begin{eqnarray*}
S=|m|^4\sumstar_{\substack{|a|<|mr|\\ m|2aA\boldsymbol{\lambda}-\overline{g/m}\vec{c}}} \psi\left(\frac{-ak}{mr}\right)\prod_{j=1}^{4}\sum_{b\in\mathcal{O}/(r)} \psi\left(\frac{\frac{g}{m}a\frac{\eta_j}{(r,\eta_j)}b^2+\frac{(2a\frac{\eta_j}{(r,\eta_j)}\lambda_j-\overline{g/m}\frac{c_j}{(r,\eta_j)})}{m}b}{r/(r,\eta_j)} \right).
\end{eqnarray*}
Completing the square and using the computation of Gauss sums, we may rewrite
\begin{eqnarray*}
&&\sum_{b\in\mathcal{O}/(r)} \psi\left(\frac{\frac{g}{m}a\frac{\eta_j}{(r,\eta_j)}b^2+\frac{(2a\frac{\eta_j}{(r,\eta_j)}\lambda_j-\overline{g/m}\frac{c_j}{(r,\eta_j)})}{m}b}{r/(r,\eta_j)} \right)\\&=& |\gcd(r,\eta_j)|\left(\frac{(g/m)a(\eta_j/(r,\eta_j))}{r/(r,\eta_j)}\right)\tau_{r/(r,\eta_j)}\psi\left(\frac{-\overline{(g/m)a(\eta_j/(r,\eta_j))}\left(\frac{2a\frac{\eta_j}{(r,\eta_j)}\lambda_j-\overline{g/m}\frac{c_j}{(r,\eta_j)}}{2m}\right)^2}{r/(r,\eta_j)} \right),
\end{eqnarray*}
where $\left(\frac{(g/m)a(\eta_j/(r,\eta_j))}{r/(r,\eta_j)}\right)$ is the Jacobi symbol. Consequently, $S$ is equal to
\begin{eqnarray*}&&\left(\prod_{j=1}^4|\gcd(r,\eta_j)|\tau_{r/(r,\eta_j)}\right)|m|^{4}\\&&\cdot\sumstar_{\substack{|a|<|mr|\\ m|2aA\boldsymbol{\lambda}-\overline{g/m}\vec{c}}}\left(\prod_{j=1}^4\left(\frac{(g/m)a(\eta_j/(r,\eta_j))}{r/(r,\eta_j)}\right)\right)\psi\left(\frac{-ak}{mr}\right)\psi\left(\frac{-\sum_j\overline{(g/m)a(\eta_j/(r,\eta_j))}\left(\frac{2a\frac{\eta_j}{(r,\eta_j)}\lambda_j-\overline{g/m}\frac{c_j}{(r,\eta_j)}}{2m}\right)^2}{r/(r,\eta_j)} \right).\end{eqnarray*}
So far, all our computations were valid for general quadratic forms. In the rest of this proof, we restrict to the Morgenstern quadratic form. In this case, we can write $S$ more explicitly:
\[|m|^4\left(|\gcd(r,t-1)|\tau_r\tau_{r/(r,t-1)}\right)^2\psi\left(\frac{\overline{g/m}^2\left<\boldsymbol{\lambda},\vec{c}\right>}{m^2r}\right)\sumstar_{\substack{|a|<|mr|\\ m|2aA\boldsymbol{\lambda}-\overline{g/m}\vec{c}}}\psi\left(\frac{-\overline{g/m}fa-\frac{1}{4}\overline{g/m}^3F^*(\vec{c})\overline{a}}{m^2r} \right),\]
where the last equality follows from $km+\overline{g/m}F(\boldsymbol{\lambda})\equiv \overline{g/m}f\bmod m^2r$. Here, $F^*$ is the dual of $F$ as before in the computation of the oscillatory integrals. Furthermore, the condition $m|2aA\boldsymbol{\lambda}-\overline{g/m}\vec{c}$ is equivalent to $a\equiv \overline{g/m}\beta(\vec{c})\bmod m$. We deduce that
\begin{eqnarray*}S_{g,r}(\vec{c})&=&\frac{|g|^4}{|m|}\left(|\gcd(r,t-1)|\tau_r\tau_{r/(r,t-1)}\right)^2\psi\left(\frac{-\overline{mr}\beta(\vec{c})k}{g/m}\right)\psi\left(\frac{\overline{g/m}^2\left<\boldsymbol{\lambda},\vec{c}\right>}{m^2r}\right)\\&\cdot&\sumstar_{\substack{|a|<|m^2r|\\ a\equiv\overline{g/m}\beta(\vec{c})\bmod m}}\psi\left(\frac{-\overline{g/m}fa-\frac{1}{4}\overline{g/m}^3F^*(\vec{c})\overline{a}}{m^2r} \right),
\end{eqnarray*}
where we have changed summation over $a$ modulo $mr$ to modulo $m^2r$ at the cost of introducing a factor of $\frac{1}{|m|}$. We may replace the congruence condition by a summation modulo $m$ and rewrite $S_{g,r}(\vec{c})$ in terms of Kloosterman sums:
\begin{eqnarray*}S_{g,r}(\vec{c})&=&\frac{|g|^4}{|m|^2}\left(|\gcd(r,t-1)|\tau_r\tau_{r/(r,t-1)}\right)^2\psi\left(\frac{-\overline{mr}\beta(\vec{c})k}{g/m}\right)\psi\left(\frac{\overline{g/m}^2\left<\boldsymbol{\lambda},\vec{c}\right>}{m^2r}\right)\\&&\cdot\sum_{s\in\mathcal{O}/(m)}\psi\left(\frac{-s\overline{g/m}\beta(\vec{c})}{m}\right)\Kl_{m^2r}\left(\overline{g/m}f-mrs,\frac{1}{4}\overline{g/m}^3F^*(\vec{c})\right).
\end{eqnarray*}

By the Chinese Remainder Theorem,
\[\psi\left(\frac{-\overline{mr}\beta(\vec{c})k}{g/m}\right)\psi\left(\frac{\overline{g/m}^2\left<\boldsymbol{\lambda},\vec{c}\right>}{m^2r}\right)=\psi\left(\frac{-\overline{mr}\beta(\vec{c})\frac{(f-F(\boldsymbol{\lambda}))}{m}-\overline{m^2r}\left<\boldsymbol{\lambda},\vec{c}\right>}{(g/m)^2}\right)\psi\left(\frac{\left<\boldsymbol{\lambda},\vec{c}\right>}{g^2r}\right),\]
from which the conclusion follows.
\end{proof}

\section{Strong approximation and Ramanujan graphs}\label{morgenstern}
In this section, we begin by showing how a certain square-root cancellation in an exponential sum gives us strong approximation for non-degenerate quadratic forms in four variables over $\mathbb{F}_q[t]$. We then proceed to show that assuming the twisted Linnik\textendash Selberg Conjecture~\ref{tslconj}, we do have the desired square-root cancellation for Morgenstern's quadratic forms used in the construction of Ramanujan graphs with even degree (odd $q$).\\
\\
First, let us proceed to estimate the main term contributing to the smooth smooth $N(w,\boldsymbol{\lambda})$. The following lemmas are true for quadratic forms in more variables, but we restrict here to $d=4$.
\begin{lemma}\label{lem:Izerobound}Suppose $\varepsilon>0$. With the notation as before with $F$ a non-degenerate quadratic form in $4$ variables over $\mathbb{F}_q[t]$, and for $1\leq |r|\leq\hQ^{1-\varepsilon}$, we have
\[I_{g,r}(\vec{0})= C_F\hQ^4\]
for some non-negative constant $C_F$ and for sufficiently large (depending only on $\varepsilon$ and $F$) $\hQ$. $C_F>0$ if the system under consideration is solvable over $K_{\infty}$.
\end{lemma}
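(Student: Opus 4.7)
The starting point is the explicit form
\[
I_{g,r}(\vec{0}) = \frac{\hQ}{|r|}\,\vol\bigl\{\vec t\in K_\infty^4 : |\vec t|<\hR,\ |G(\vec t)|<\hQ|r|\bigr\}.
\]
By Lemma~\ref{auxlem}, on the region $|\vec t|<\hR$ the constraint $|G(\vec t)|<\hQ|r|$ is equivalent to $|F(\vec t)-k/g|<\hQ|r|$, so the linear term in $G$ is absorbed. I would then rescale $\vec t=t^R\vec u$ to bring everything into the unit polydisc: $\vec u\in\TT^4$, $d\vec t=\hR^4\,d\vec u$, and $F(\vec t)=t^{2R}F(\vec u)$. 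Setting $\kappa:=k/(g t^{2R})$, which has $|\kappa|$ bounded by a constant depending only on $F$ (using $R=\lfloor\deg f/2-\deg g+1\rfloor$, one checks $|k/g|\lesssim \hR^2$), this gives
\[
I_{g,r}(\vec{0}) = \frac{\hQ\hR^4}{|r|}\,\vol\bigl\{\vec u\in\TT^4: |F(\vec u)-\kappa|<\hQ|r|/\hR^2\bigr\}.
\]

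Since $Q=R+1$, we have $\hQ/\hR=q$, so the threshold equals $q|r|/\hR$. Under the hypothesis $|r|\leq\hQ^{1-\varepsilon}$, this threshold is at most $q^{2-\varepsilon}\hR^{-\varepsilon}$, which can be made arbitrarily small by taking $\hQ$ large enough depending only on $\varepsilon$ and $F$. At this point I would invoke a local non-Archimedean fiber-integration result for the non-degenerate quadratic form $F$: there is a threshold $T_0>0$ depending only on $F$ and $|\kappa|$ such that for every $0<T<T_0$,
\[
\vol\bigl\{\vec u\in\TT^4: |F(\vec u)-\kappa|<T\bigr\}=T\cdot\sigma_\infty(F,\kappa),
\]
where $\sigma_\infty(F,\kappa)\geq 0$ is the local density at infinity. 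Positivity of $\sigma_\infty(F,\kappa)$ is equivalent to the existence of a $\TT^4$-point on the level set $F(\vec u)=\kappa$, which after unraveling the rescaling is precisely the condition that $F(\vec x)=f$ admits a $K_\infty$-solution with $|\vec x|\leq|f|^{1/2}$, i.e., solvability of the local system at $\infty$.

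Substituting back and using $\hR=\hQ/q$, the factor of $|r|$ cancels and we obtain
\[
I_{g,r}(\vec{0}) = \frac{\hQ\hR^4}{|r|}\cdot\frac{q|r|}{\hR}\cdot\sigma_\infty(F,\kappa) = q\hQ\hR^3\,\sigma_\infty(F,\kappa) = \frac{\sigma_\infty(F,\kappa)}{q^2}\,\hQ^4.
\]
Setting $C_F := \sigma_\infty(F,\kappa)/q^2$ (a quantity that depends on $F$, $f$, $g$, $\boldsymbol{\lambda}$ but crucially not on $r$) concludes the proof, with $C_F>0$ exactly under the asserted solvability condition.

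The main obstacle is justifying the exact linear scaling of the $\TT^4$-volume once the threshold $T$ drops below some universal $T_0=T_0(F,\kappa)$. In the non-Archimedean setting this should be cleaner than its real counterpart: away from the singular locus of the level set $\{F=\kappa\}$, which is empty by non-degeneracy of $F$ and $\kappa$ being a regular value, one can use the $K_\infty$-analytic implicit function theorem (cf.\ \cite[Proposition~4.2]{SZ2019}) to locally write $F$ as one of the coordinates, whereupon the volume becomes a product of a disc volume and the constant $T$. Pasting together finitely many such local charts—finiteness being guaranteed by compactness of $\TT^4$—gives the desired linear formula and identifies $T_0$ uniformly in terms of $F$ and $\kappa$.
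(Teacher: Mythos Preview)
Your argument is correct and lands on the same conclusion as the paper's, but the execution differs in one technical point worth noting. The paper, after the same substitution $\vec x=g\vec t+\boldsymbol{\lambda}$, writes $f=\alpha_f u^2$ with $\alpha_f\in\{1,\nu,t,\nu t\}$ and rescales so that the target becomes $\alpha_f/t^2$, i.e., one of \emph{four} fixed elements of $K_\infty$; it then expresses $I_{g,r}(\vec 0)$ Fourier-analytically as $\int_{|\beta|<B}\int_{\TT^4}\psi((F(\vec x)-\alpha_f/t^2)\beta)\,d\vec x\,d\beta$ with $B\gg\hQ^\varepsilon$ and invokes Lemma~6.2 of \cite{SZ2019} for the stabilization as $B\to\infty$. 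You instead keep the variable target $\kappa=k/(gt^{2R})$ and argue the small-threshold linear scaling via the implicit function theorem. These are dual descriptions of the same phenomenon (the Fourier integral equals $B\cdot\vol\{|F-\text{target}|<B^{-1}\}$), so neither approach is more or less rigorous. The paper's trick of reducing to finitely many $\alpha_f$ makes the uniformity of the stabilization threshold in $f,g,\boldsymbol{\lambda}$ immediate; in your formulation you need $T_0$ uniform over the possible $\kappa$, which is fine because one checks $|\kappa|\in\{q^{-1},q^{-2}\}$ once $\hQ$ is large (so in particular $\kappa$ is a regular value bounded away from $0$). That small verification is worth making explicit, but otherwise your proof is complete.
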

\begin{proof}
It follows from equation~\ref{intt} that
\[I_{g,r}(\vec{0})=\frac{\hQ}{|r|} \int_{\substack{|\vec{t}|< \hR\\ |G(\vec{t})|< \hQ|r|}} d\vec{t}=\frac{\hQ}{|r|} \int_{\substack{|g\vec{t}+\boldsymbol{\lambda}| \leq |f|^{1/2}\\ |F(g\vec{t}+\boldsymbol{\lambda})-f|<\hQ|r||g|^2}} d\vec{t}.\]
Making the substitution $\vec{x}=g\vec{t}+\boldsymbol{\lambda}$ gives us the equality
\[I_{g,r}(\vec{0})=\frac{\hQ}{|r||g|^4}\int_{|\vec{x}|\leq |f|^{1/2}:|F(\vec{x})-f|< \hQ|r||g|^2}d\vec{x}.\]
Write $f=\alpha_f u^2$, where $\alpha_f\in\{1,\nu,t,\nu t\}$ is the quadratic residue of $f$. By Lemma~\ref{lem:orthog} and Fubini, we may rewrite this as
\begin{eqnarray*}I_{g,r}(\vec{0})&=&\frac{\hQ}{|r||g|^4}\int_{|\vec{x}|\leq |f|^{1/2}}\int_{\TT}\psi\left(\frac{(F(\vec{x})-f)}{rg^2t^{Q}}\alpha\right)d\alpha d\vec{x}\\ &=&\frac{\hQ}{|r||g|^4}\int_{\TT}\int_{|\vec{x}|< \hat{D}}\psi\left(\frac{(F(\vec{x})-f)}{rg^2t^Q}\alpha\right)d\vec{x}d\alpha\\ &=& \frac{\hQ\hat{D}^4}{|r||g|^4}\int_{\TT}\int_{\TT^4}\psi\left(\frac{(F(\vec{x})-f/(t^2u^2))}{rg^2t^{Q}/(t^2u^2)}\alpha\right)d\vec{x}d\alpha\\ &=& \frac{\hQ\hat{D}^4}{|r||g|^4}\int_{\TT}\int_{\TT^4}\psi\left(\frac{(F(\vec{x})-\alpha_f/(t^2))}{rg^2t^{Q}/(t^2u^2)}\alpha\right)d\vec{x}d\alpha
\end{eqnarray*}
where $D:=1+\deg u$, and the last equality follows from scaling the $\vec{x}$ coordinate by a factor of $\hat{D}$. Making the substitution $\beta=\frac{\alpha}{rg^2t^{Q}/(t^2u^2)}$, we obtain the equality
\[I_{g,r}(\vec{0})=\frac{\hQ^2\hat{D}^4}{|g|^{2}\hat{2D}}\int_{|\beta|<\frac{\hat{2D}}{\hQ|r||g|^2}}\int_{\TT^4}\psi\left((F(\vec{x})-\alpha_f/t^2)\beta\right)d\vec{x}d\beta.\]
Note that the integral is equal to
\begin{eqnarray*}\frac{\hat{2D}}{\hQ|r||g|^2}\vol\left(\left\{\vec{x}\in\TT^4:|F(\vec{x})-\alpha_f/t^{2}|\leq \frac{\hQ|r||g|^2}{\hat{2D}}\right\}\right)\geq 0.
\end{eqnarray*}
Note that $\frac{\hat{2D}}{\hQ|r||g|^2}\gg \hQ^{\varepsilon}$. Applying Lemma 6.2 of~\cite{SZ2019}, we can choose $\hQ$ large enough (depending on $\varepsilon$ and the $F$) so that the integral over $|\beta|$ is constant over balls of radii at least $\hQ^{\varepsilon}$. The conclusion follows.
\end{proof}
As in Lemma 6.3 of~\cite{SZ2019}, we can show that for $\hQ^{1-\varepsilon}\leq |r|\leq\hQ$, the contribution of the terms in $N(w,\boldsymbol{\lambda})$ when $\vec{c}=\vec{0}$ and such $r$ is small.
\begin{lemma}\label{lem:small} 
\[\sum_{\hQ^{1-\varepsilon}\leq |r|\leq\hQ}|gr|^{-4}|S_{g,r}(\vec{0})||I_{g,r}(\vec{0})|\ll_{\varepsilon,F}|g|^{\varepsilon}\hQ^{\frac{7}{2}+\varepsilon}\]
\end{lemma}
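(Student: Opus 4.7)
The plan is to combine the explicit formula for $S_{g,r}(\vec 0)$ furnished by Proposition~\ref{prop:phases} with the trivial volume bound
\[|I_{g,r}(\vec 0)|\ \le\ \frac{\hQ}{|r|}\int_{|\vec t|<\hR}d\vec t\ =\ \frac{\hQ\hR^4}{|r|}\ \ll\ \frac{\hQ^5}{|r|},\]
which follows at once from \eqref{intt} and the identity $Q=R+1$.

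I would next specialise Proposition~\ref{prop:phases} to $\vec c=\vec 0$, so that $\beta(\vec 0)=0$ and $F^{*}(\vec 0)=0$, whereupon every Kloosterman sum collapses to the Ramanujan sum $c_{m^2r}(\overline{g/m}f-mrs)$ with $m=(g,r^{\infty})$. The key identity I need to establish is
\[\sum_{s\in\mathcal{O}/(m)}c_{m^2r}\bigl(\overline{g/m}f-mrs\bigr)=0\quad\text{whenever }m>1,\]
proved by opening the Ramanujan sum, swapping the order of summation, and recognising the inner sum in $s$ as $|m|\cdot\mathbf 1_{m\mid x}$ for $x\in(\mathcal{O}/m^2r)^{*}$; the constraints $m\mid x$ and $(x,m^2r)=1$ are incompatible as soon as $m>1$. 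For $m=1$, equivalently $(g,r)=1$, only $c_r(\overline g f)$ survives. Inserting $|\tau_r|^2\le|r|$ and $|c_r(a)|\le\gcd(a,r)$ into Proposition~\ref{prop:phases} then yields
\[|S_{g,r}(\vec 0)|\ \ll\ |g|^4|r|^2\,\gcd(r,t-1)\,\gcd(f,r)\ \text{ if }(g,r)=1,\qquad S_{g,r}(\vec 0)=0\ \text{ otherwise}.\]

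Plugging both estimates into the sum to be controlled gives
\[\sum_{\hQ^{1-\varepsilon}\le|r|\le\hQ}|gr|^{-4}|S_{g,r}(\vec 0)||I_{g,r}(\vec 0)|\ \ll\ \hQ^{5}\sum_{\hQ^{1-\varepsilon}\le|r|\le\hQ}|r|^{-3}\gcd(r,t-1)\gcd(f,r).\]
A standard divisor estimate produces $\sum_{|r|=\hat T}\gcd(f,r)\ll\hat T|f|^{\varepsilon}$, so each dyadic scale contributes $\hat T^{-2}|f|^{\varepsilon}$; summing the geometric series from $\hat T\sim\hQ^{1-\varepsilon}$ is dominated by its smallest term and yields $\hQ^{-2+2\varepsilon}|f|^{\varepsilon}$. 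Using $|f|\ll\hQ^2|g|^2$ (which follows from $\hQ\asymp q^2|f|^{1/2}/|g|$), the total is $\ll\hQ^{3+O(\varepsilon)}|g|^{O(\varepsilon)}$, which after rescaling $\varepsilon$ lies comfortably inside $|g|^{\varepsilon}\hQ^{7/2+\varepsilon}$. The principal obstacle is the vanishing identity for $m>1$; without it, the naive estimate $\bigl|\sum_s c_{m^2r}\bigr|\le|m|\cdot|m^2r|$ injects an extra factor of $|g|$ that the geometric series is too short to absorb, narrowly missing the target.
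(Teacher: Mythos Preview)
Your argument is correct, but it proceeds along a different route from the paper. The paper does not compute $S_{g,r}(\vec 0)$ explicitly; instead it invokes the general pointwise bound on $S_{g,r}(\vec 0)$ from Proposition~3.1 of \cite{SZ2019} (valid for any non-degenerate quadratic form), combines it with the trivial estimate $|I_{g,r}(\vec 0)|\ll_{\varepsilon,F}\hQ^{4+\varepsilon}$, and then follows the proof of Lemma~6.3 of \cite{SZ2019} verbatim. Your approach instead feeds $\vec c=\vec 0$ into Proposition~\ref{prop:phases}, observes that the Kloosterman sums degenerate to Ramanujan sums, and proves the pleasant vanishing identity $\sum_{s\bmod m}c_{m^2r}(\overline{g/m}f-mrs)=0$ for $m>1$; this kills all terms with $(g,r)\neq 1$ outright and leaves only $|S_{g,r}(\vec 0)|\ll |g|^4|r|^2|\gcd(r,t-1)|\,|\gcd(f,r)|$. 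The upshot is a bound of order $\hQ^{3+O(\varepsilon)}|g|^{O(\varepsilon)}$, in fact stronger than the stated $\hQ^{7/2+\varepsilon}$.

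Two remarks. First, Proposition~\ref{prop:phases} is specific to the Morgenstern form, so your proof establishes the lemma only in that case, whereas the paper's proof (via the external Proposition~3.1) covers general non-degenerate $F$ in four variables; for the purposes of this paper only the Morgenstern case is needed, so this is not a genuine gap. Second, your verification that the swap of summation forces $m\mid x$ with $x\in(\mathcal O/m^2r)^{*}$, hence an empty sum when $|m|>1$, is clean and does not require $g$ to be irreducible---so your vanishing claim holds in the full generality in which Proposition~\ref{prop:phases} is stated.
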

\begin{proof}
The only difference in the proof of this lemma and that of Lemma 6.3 of~\cite{SZ2019} is that the definitions of the oscillatory integrals are different. However, we only need the same bound
\[|I_{g,r}(\vec{0})|\ll_{\varepsilon,F} \hQ^{4+\varepsilon}\]
for such $r$, which is trivial. The rest of the proof is as before; we also need to use Proposition 3.1 of~\cite{SZ2019}.
\end{proof}
We now proceed to show that
\[\sum_{r:1\leq |r|\leq \hat{T}}|gr|^{-4}S_{g,r}(\vec{0})\]
can be written in terms of local densities. Indeed, by Lemma 6.5 of~\cite{SZ2019}, we have the estimate
\[\sum_{r:1\leq |r|\leq \hat{T}}|gr|^{-4}S_{g,r}(\vec{0})=\sum_{r}|gr|^{-4}S_{g,r}(\vec{0})+O_{\varepsilon,\Delta}(\hat{T}^{-1/2+\varepsilon})\]
for every $\varepsilon>0$. On the other hand, Lemma 6.6 of \textit{loc.cit} shows that
\[\sum_{r}|gr|^{-4}S_{g,r}(\vec{0})=\prod_{\varpi}\sigma_{\varpi} \gg |f|^{-\varepsilon},\]
where $\varpi$ ranges over the monic irreducible polynomials in $\mathbb{F}_q[t]$, and
\[\sigma_{\varpi}:=\lim_{k\rightarrow\infty}\frac{|\left\{\vec{x}\bmod \varpi^{k+\nu_\varpi(g)}:F(\vec{x})\equiv f\bmod\varpi^{k+\nu_\varpi(g)},\ \vec{x}\equiv\boldsymbol{\lambda}\bmod\varpi^{\nu_{\varpi}(g)}\right\}|}{|\varpi|^{3k}}.\]
\\
Using the above, estimates, let us take the simple step of showing that conditional on a square-root cancellation we have optimal strong approximation for any non-degenerate quadratic form in four variables over $\mathbb{F}_q[t]$. First, recall from Section~\ref{deltamethod} that the smooth sum for quadratic forms of four variables is
\[N(w,\boldsymbol{\lambda})=\frac{1}{|g|\hQ^2}\sum_{\substack{
r\in \cO\\
|r|\leq \hat Q
} }\sum_{\vec{c}\in\mathcal{O}^4}|gr|^{-4}S_{g,r}(\vec{c})I_{g,r}(\vec{c}).\]
\begin{lemma}Suppose we have a non-degenerate quadratic form $F$ over $\mathbb{F}_q[t]$ in $d=4$ variables. Additionally,\textup{ assume} that
\[\left|\sum_{1\leq |r|\leq\hQ}\sum_{\vec{c}\neq 0}|gr|^{-4}S_{g,r}(\vec{c})I_{g,r}(\vec{c})\right|\ll_{\varepsilon,F} \hQ^{\frac{7}{2}+\varepsilon}|g|^{\frac{1}{2}+\varepsilon}.\]
Then Conjecture~\ref{mainconj} is true for the given quadratic form $F$.
\end{lemma}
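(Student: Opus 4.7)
The plan is to split the delta-method decomposition
\[
N(w,\boldsymbol{\lambda})=\frac{1}{|g|\hQ^2}\sum_{|r|\leq\hQ}\sum_{\vec{c}\in\mathcal{O}^4}|gr|^{-4}S_{g,r}(\vec{c})I_{g,r}(\vec{c})
\]
into a main term coming from $\vec{c}=\vec{0}$ and small $|r|$, plus three error contributions, and then choose $Q$ so that the main term beats all errors precisely when $\deg f\geq(4+\varepsilon)\deg g+O_{\varepsilon,F}(1)$.

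First, I would isolate the contribution from $\vec{c}=\vec{0}$ with $|r|\leq\hQ^{1-\varepsilon}$. By Lemma~\ref{lem:Izerobound}, $I_{g,r}(\vec{0})=C_F\hQ^4$ in this range (with $C_F>0$ under the assumed $K_\infty$-solvability of \eqref{maineq}), so this part equals
\[
\frac{C_F\hQ^2}{|g|}\sum_{|r|\leq\hQ^{1-\varepsilon}}|gr|^{-4}S_{g,r}(\vec{0}).
\]
Using Lemma~6.5 of \cite{SZ2019} to extend the $r$-sum to all $r$ with an $O_{\varepsilon,\Delta}(\hQ^{-\varepsilon/2})$ error, and then Lemma~6.6 of \cite{SZ2019} to evaluate the completed sum as $\prod_\varpi\sigma_\varpi\gg|f|^{-\varepsilon}$, I obtain a main term of size $\gg C_F|f|^{-\varepsilon}\hQ^2/|g|$.

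Next, I estimate the three error pieces. The $\vec{c}=\vec{0}$ tail $\hQ^{1-\varepsilon}\leq|r|\leq\hQ$ is bounded by Lemma~\ref{lem:small} as $\ll|g|^\varepsilon\hQ^{7/2+\varepsilon}$, contributing $\ll|g|^{-1+\varepsilon}\hQ^{3/2+\varepsilon}$ after dividing by $|g|\hQ^2$. The off-diagonal piece, by the hypothesis of the lemma, contributes
\[
\frac{1}{|g|\hQ^2}\cdot\hQ^{7/2+\varepsilon}|g|^{1/2+\varepsilon}=|g|^{-1/2+\varepsilon}\hQ^{3/2+\varepsilon}.
\]
Thus the total error is $\ll|g|^{-1/2+\varepsilon}\hQ^{3/2+\varepsilon}$, and $N(w,\boldsymbol{\lambda})$ is positive (hence an integral solution exists) provided
\[
\frac{\hQ^2}{|g|}|f|^{-\varepsilon}\gg \frac{\hQ^{3/2+\varepsilon}}{|g|^{1/2-\varepsilon}},
\]
equivalently $\hQ^{1/2}\gg|g|^{1/2}|fg|^{O(\varepsilon)}$.

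Finally, I recall that $Q=R+1$ with $R=\lfloor\deg f/2-\deg g+1\rfloor$, so $\hQ\asymp|f|^{1/2}/|g|$. The inequality above becomes $|f|^{1/2}\gg|g|^{2}|fg|^{O(\varepsilon)}$, i.e.\ $\deg f\geq(4+\varepsilon)\deg g+O_{\varepsilon,F}(1)$, which is exactly the hypothesis of Conjecture~\ref{mainconj}. No step here is genuinely hard since every analytic input—the main-term evaluation, the tail estimate for $\vec{c}=\vec{0}$, and the local density lower bound—was already established in \cite{SZ2019} and recalled above; the only content of the argument is the bookkeeping of the $|g|$ and $\hQ$ powers, and recognizing that the exponent $4+\varepsilon$ is forced precisely by the $|g|^{1/2+\varepsilon}$ saving assumed in the off-diagonal hypothesis. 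The main obstacle is not in this lemma at all but in later sections, where one must actually verify this square-root cancellation hypothesis for the Morgenstern form by reducing, via Propositions~\ref{prop:oscmorgen} and \ref{prop:phases}, to the twisted Linnik--Selberg Conjecture~\ref{tslconj}.
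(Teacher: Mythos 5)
Your argument is correct and follows the same route as the paper: isolate the $\vec{c}=\vec{0}$, $|r|\leq\hQ^{1-\varepsilon}$ piece, evaluate it as $C_F\hQ^{2}|g|^{-1}\prod_{\varpi}\sigma_{\varpi}\gg C_F\hQ^{2}|g|^{-1}|f|^{-\varepsilon}$ via Lemmas 6.5 and 6.6 of \cite{SZ2019}, absorb the $\vec{c}=\vec{0}$ tail with Lemma~\ref{lem:small} and the off-diagonal terms with the assumed bound, and note that the total error $\ll|g|^{-1/2+\varepsilon}\hQ^{3/2+\varepsilon}$ is dominated by the main term precisely when $\deg f\geq(4+\varepsilon)\deg g+O_{\varepsilon,F}(1)$. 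One small transcription slip: the completion error for the singular series is $O_{\varepsilon,\Delta}\bigl(\hQ^{(1-\varepsilon)(-1/2+\varepsilon)}\bigr)$ (Lemma 6.5 of \cite{SZ2019} with $\hat{T}=\hQ^{1-\varepsilon}$), not $O(\hQ^{-\varepsilon/2})$; the stronger, correct bound is what makes this error absorbable, and with it the rest of your bookkeeping goes through unchanged.
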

\begin{proof}
Using the assumption, Lemma~\ref{o}, and Lemma~\ref{lem:small}, we have
\[N(w,\boldsymbol{\lambda})=\frac{1}{|g|\hQ^2}\sum_{\substack{r\in \cO\\|r|\leq \hat Q^{1-\varepsilon}}}|gr|^{-4}S_{g,r}(\vec{0})I_{g,r}(\vec{0})+O_{\varepsilon,F}\left(\hQ^{\frac{3}{2}+\varepsilon}|g|^{-\frac{1}{2}+\varepsilon}\right).\]
From Lemma~\ref{lem:Izerobound} above, $I_{g,r}(\vec{0})=C_F\hQ^4$ for some constant $C_F>0$ and sufficiently large (depending only on $\varepsilon$ and $F$) $\hQ$. Hence for such $\hQ$,
\[\frac{1}{|g|\hQ^2}\sum_{\substack{r\in \cO\\|r|\leq \hat Q^{1-\varepsilon}}}|gr|^{-4}S_{g,r}(\vec{0})I_{g,r}(\vec{0})=\frac{C_F\hQ^{2}}{|g|}\sum_{\substack{r\in \cO\\|r|\leq \hat Q^{1-\varepsilon}}}|gr|^{-4}S_{g,r}(\vec{0}).\]
From the discussion prior to this lemma, we know that we can express the first sum in terms of local densities: 
\[\sum_{\substack{r\in \cO\\|r|\leq \hat Q^{1-\varepsilon}}}|gr|^{-4}S_{g,r}(\vec{0})=\prod_{\varpi}\sigma_{\varpi}+O\left(\hQ^{-\frac{1}{2}+\varepsilon}\right).\] 
Consequently, the smooth sum is equal to
\begin{eqnarray*}N(w,\boldsymbol{\lambda})&=&\frac{C_F\hQ^{2}}{|g|}\left(\prod_{\varpi}\sigma_{\varpi}+O\left(\hQ^{-\frac{1}{2}+\varepsilon}\right)\right)+O_{\varepsilon,F}\left(\hQ^{\frac{3}{2}+\varepsilon}|g|^{-\frac{1}{2}+\varepsilon}\right)\\&=&\frac{C_F\hQ^{2}}{|g|}\prod_{\varpi}\sigma_{\varpi}+O_{\varepsilon,F}\left(\hQ^{\frac{3}{2}+\varepsilon}|g|^{-\frac{1}{2}+\varepsilon}\right)\\&=& \frac{C_F\hQ^{2}}{|g|}\prod_{\varpi}\sigma_{\varpi}\left(1+O_{\varepsilon,F}\left(\frac{|f|^{\varepsilon}\hQ^{\frac{3}{2}+\varepsilon}|g|^{\frac{1}{2}+\varepsilon}}{\hQ^{2}}\right)\right)\\&=&\frac{C_F\hQ^{2}}{|g|}\prod_{\varpi}\sigma_{\varpi}\left(1+O_{\varepsilon,F}\left(\frac{|g|^{1+\varepsilon}}{|f|^{\frac{1}{4}-\varepsilon}}\right)\right)\\&=& \frac{C_F\hQ^{2}}{|g|}\prod_{\varpi}\sigma_{\varpi}\left(1+O_{\varepsilon,F}\left(\left(\frac{|g|^{4+\varepsilon}}{|f|}\right)^{\frac{1}{4}-\varepsilon}\right)\right).\end{eqnarray*}
Hence, if $|f|\gg |g|^{4+\varepsilon}$, we have strong approximation. In the third equality, we have used that the product of the local densities is $\gg |f|^{-\varepsilon}$.
\end{proof}
\begin{remark}
In the proof of the main theorem of ~\cite{SZ2019}, the only reason we had to have $|f|\gg |g|^{6+\varepsilon}$ in the case of four variables was that we used the weaker statement
\[\left|\sum_{1\leq |r|\leq\hQ}\sum_{\vec{c}\neq 0}|gr|^{-d}S_{g,r}(\vec{c})I_{g,r}(\vec{c})\right|\leq \sum_{1\leq |r|\leq\hQ}\sum_{\vec{c}\neq 0}|gr|^{-d}|S_{g,r}(\vec{c})||I_{g,r}(\vec{c})|\ll_{\varepsilon,F} \hQ^{\frac{d+3}{2}+\varepsilon}|g|^{\frac{d-3}{2}+\varepsilon}(1+|g|^{-\frac{d-5}{2}+\varepsilon})\]
proved in Proposition 7.1 of \textit{loc.cit}. Indeed, for $d=4$, this is weaker than what we ask above because then $1+|g|^{-\frac{d-5}{2}+\varepsilon}\sim |g|^{1/2+\varepsilon}$ and so is not $O(1)$ (in contrast to when $d\geq 5$). The bound we assume in the statement of this lemma is precisely that if we do not take absolute values we get an extra power saving of $|g|^{1/2}$ when in the case of four variables.
\end{remark}
\begin{remark}
In light of the previous remark and the proof of the above lemma, any improvement to the factor $|g|^{1/2}$ in the previous remark would allow us to weaken the condition $\deg f\geq (6+\varepsilon)\deg g+O_{\varepsilon}(1)$ that was required for the main theorem of \cite{SZ2019} in the case of non-degenerate quadratic forms in $d=4$ variables.
\end{remark}
In the rest of this section, we show how the twisted Linnik\textendash Selberg Conjecture~\ref{tslconj} implies that the above square-root cancellation is true for Morgenstern's quadratic forms. This in turn implies Conjecture~\ref{conj:graphs} giving the upper bound $\left(\frac{4}{3}+\varepsilon\right)\log_q|G|+O_{\varepsilon}(1)$ for the diameter of $q+1$-regular Morgenstern Ramanujan graphs with $q$ odd.\\
\\
Recall the notations in Section~\ref{osil}. In order to understand the error in the smooth sum $N(w,\boldsymbol{\lambda})$ for the Morgenstern quadratic form, we use the explicit formulas for the oscillatory integrals $I_{g,r}(\vec{c})$ and exponential sums derived in the last two sections. Note that by Lemma~\ref{o}, $I_{g,r}(\vec{c})=0$ when $|r|\leq\hQ$ and $|\vec{c}|\geq\hQ|g|/\hR$. Therefore, it suffices to study
\[\sum_{1\leq |r|\leq\hQ}\sum_{0<|\vec{c}|<\frac{\hQ|g|}{\hR}}|gr|^{-d}S_{g,r}(\vec{c})I_{g,r}(\vec{c})=E_1+E_2,\]
where
\[E_1:=\sum_{0<|\vec{c}|<\hQ|g|/\hR}\sum_{1\leq |r|\leq\frac{\hR |\vec{c}|q^{\pi_{\vec{c}}-1}}{|g|}}|gr|^{-4}S_{g,r}(\vec{c})I_{g,r}(\vec{c})\]
and
\[E_2:=\sum_{0<|\vec{c}|<\hQ|g|/\hR}\sum_{\substack{\frac{\hR |\vec{c}|q^{\pi_{\vec{c}}}}{|g|}\leq |r|\leq\hQ}}|gr|^{-4}S_{g,r}(\vec{c})I_{g,r}(\vec{c}).\]
Here, $\pi_{\vec{c}}=0$ if $\vec{c}$ satisfies $\max(|c_3|,|c_4|)>\max(|c_1|,|c_2|)$ and $\deg f$ is even; otherwise, $\pi_{\vec{c}}=1$. In order to obtain the desired bound in the above lemma, it suffices to prove the desired bound for each of $E_1$ and $E_2$ separately. For simplicity, we assume for the rest of this section that \textit{$g$ is an irreducible polynomial} in $\mathbb{F}_q[t]$.\\
\\
We first treat $E_2$. Note that by Proposition~\ref{prop:oscmorgen}, for $\frac{\hR|\vec{c}|q^{\pi_{\vec{c}}}}{|g|}\leq |r|\leq\hQ$, we have $I_{g,r}(\vec{c})=I_{g,r}(\vec{0})$ or $0$. Furthermore, from the definition of $I_{g,r}(\vec{0})$, we know that it depends on $|r|$, not $r$ itself. Therefore, it makes sense to write $I_{g,|r|}(\vec{0})$ instead of $I_{g,r}(\vec{0})$. From this discussion, we obtain
\[|E_2|\leq \sum_{0<|\vec{c}|<\hQ|g|/\hR}\sum_{\substack{\frac{\hR |\vec{c}|q^{\pi_{\vec{c}}}}{|g|}\leq \hat{T}\leq\hQ}}|I_{g,\hat{T}}(\vec{0})||g|^{-4}\hat{T}^{-4}\left|\sum_{|r|=\hat{T}}S_{g,r}(\vec{c})\right|.\]
In the following, we will use the following notation: $\sum^{\textup{exc}}$ denotes summation over those $\vec{c}$ such that $S_{g,r}(\vec{c})\neq 0$ and $|\vec{c}|\leq |g|$. We have the following accompanying lemma.
\begin{lemma}\label{boundonvecc} For every $\theta<0$ and every $0\leq T\leq \deg g$, we have
\[\sum^{\textup{exc}}_{0<|\vec{c}|\leq \hat{T}}|\vec{c}|^{\theta}\ll_{\varepsilon,F,\theta}\hat{T}^{\theta+1}.\]
\end{lemma}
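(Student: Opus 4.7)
The plan is to exploit the fact that the condition $S_{g,r}(\vec{c})\neq 0$ forces $\vec{c}$ to lie in a very sparse congruence class modulo $g$. By the observation recalled at the start of Section~\ref{section:phases}, the exponential sum vanishes unless $\vec{c}\equiv 2\beta(\vec{c})A\boldsymbol{\lambda}\bmod g$ for some $\beta(\vec{c})\in\mathcal{O}$. Since $g$ is irreducible, $\mathcal{O}/(g)$ is a field; since $(\Delta,g)=1$, the diagonal matrix $A$ is invertible modulo $g$; and since at least one coordinate of $\boldsymbol{\lambda}$ is coprime to $g$, some coordinate $u_j:=(A\boldsymbol{\lambda})_j$ is coprime to $g$. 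Consequently, the admissible residues modulo $g$ form a one-dimensional $\mathcal{O}/(g)$-subspace of $(\mathcal{O}/(g))^4$, of cardinality $|g|$, parametrized by $\beta\mapsto 2\beta A\boldsymbol{\lambda}\bmod g$.

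Next I would establish the central counting bound
\[
N(T):=\#\bigl\{\vec{c}\in\mathcal{O}^4:\vec{c}\text{ exceptional},\ 0<|\vec{c}|\leq\hat{T}\bigr\}\ \ll\ \hat{T},\qquad 0\leq T\leq\deg g.
\]
Crucially, the hypothesis $T\leq\deg g$ ensures $\hat{T}\leq|g|$, so each residue class modulo $g$ contains \emph{at most one} representative with $|\vec{c}|\leq\hat{T}$, namely its canonical representative (with coordinates of degree less than $\deg g$). For such a $\vec{c}$ the $j$-th coordinate satisfies $c_j\equiv 2\beta u_j\bmod g$ and $|c_j|\leq\hat{T}$; since multiplication by $2u_j$ is a bijection on $\mathcal{O}/(g)$, the number of admissible $\beta$ is bounded by the number of $\mu\in\mathcal{O}/(g)$ with $|\mu|\leq\hat{T}$, which is $\ll\hat{T}$.

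A dyadic decomposition over $|\vec{c}|$ would then complete the argument. Grouping exceptional $\vec{c}$ according to $|\vec{c}|=\hat{s}$ for $0\leq s\leq T$, at each level the count is bounded by $N(s)\ll\hat{s}$, and each such vector contributes $\hat{s}^\theta$. Summing the resulting geometric series with $\theta<0$ gives
\[
\sum^{\textup{exc}}_{0<|\vec{c}|\leq \hat{T}}|\vec{c}|^\theta\ \ll\ \sum_{s=0}^{T}\hat{s}\cdot\hat{s}^\theta\ =\ \sum_{s=0}^{T}\hat{s}^{\theta+1}\ \ll_\theta\ \hat{T}^{\theta+1},
\]
as claimed. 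The main obstacle is really the counting step: one needs the sharp \emph{linear} (rather than the trivial quartic) dependence of $N(T)$ on $\hat{T}$. This is precisely what the sparsity of the admissible residues buys, combined with the twin facts that $g$ is irreducible (so $\mathcal{O}/(g)$ is a field in which multiplication by $u_j$ is a bijection) and that $\hat{T}\leq|g|$ (so there is no wrap-around inside a residue class). Once this counting is in hand, the dyadic summation is routine.
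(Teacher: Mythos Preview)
Your proof is correct and follows the same line as the paper's (very terse) argument: exceptional $\vec{c}$ lie on the line $\{2\beta A\boldsymbol{\lambda}\bmod g:\beta\in\mathcal{O}/(g)\}$; a unit coordinate of $A\boldsymbol{\lambda}$ makes $\vec{c}\mapsto\beta$ injective on canonical representatives of norm $\le|g|$; and the resulting linear count $N(T)\ll\hat{T}$ feeds into a dyadic sum that the paper leaves implicit.

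One small caveat worth flagging. The final geometric-series step $\sum_{s\le T}\hat{s}^{\theta+1}\ll_\theta\hat{T}^{\theta+1}$ requires $\theta+1>0$. For $\theta=-1$ the sum is $T+1$, and for $\theta<-1$ it is $O_\theta(1)$, neither of which is $\ll\hat{T}^{\theta+1}$ uniformly in $T$. In fact the lemma as stated cannot hold for $\theta<-1$ in general (a single exceptional $\vec{c}$ of norm $1$ already gives a contribution of $1$, while $\hat{T}^{\theta+1}\to0$). This is harmless in practice: the paper only invokes the lemma with $\theta=-1+\varepsilon$ and $\theta=-\tfrac12+\varepsilon$, both strictly above $-1$, and for that range your argument is complete.
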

\begin{proof}
By Lemma 3.2 of~\cite{SZ2019}, we know that these must be polynomial multiples of $A\boldsymbol{\lambda}$ modulo $g$. Since $0<|\vec{c}|\leq |g|$, $\vec{c}$ and $\alpha$ uniquely determine each other. By assumption, at least one coordinate of $\boldsymbol{\lambda}$ is relatively prime to $g$, from which the final inequality follows. 
\end{proof}
Suppose we have for every $T\geq 0$,
\begin{equation}\label{boundwewant}\left|\sum_{|r|=\hat{T}}S_{g,r}(\vec{c})\right|\ll_{\varepsilon,F} \hQ^{\varepsilon}|g|^{4+\varepsilon}\hat{T}^{3+\varepsilon}.
\end{equation}
Then it would follow from this assumption and Lemma~\ref{boundonvecc} that
\begin{eqnarray*}|E_2|&\leq& \sum_{0<|\vec{c}|<\hQ|g|/\hR}\sum_{\substack{\frac{\hR |\vec{c}|q^{\pi_{\vec{c}}}}{|g|}\leq \hat{T}\leq\hQ}}|I_{g,\hat{T}}(\vec{0})||g|^{-4}\hat{T}^{-4}\left|\sum_{|r|=\hat{T}}S_{g,r}(\vec{c})\right|\\ &\ll_{\varepsilon,F}& \hQ^{4+\varepsilon}|g|^{\varepsilon}\sum^{\textup{exc}}_{0<|\vec{c}|<\hQ|g|/\hR}\sum_{\substack{\frac{\hR |\vec{c}|q^{\pi_{\vec{c}}}}{|g|}\leq \hat{T}\leq\hQ}}\hat{T}^{-1+\varepsilon}\\ &\ll_{\varepsilon,F}& \hQ^{4+\varepsilon}|g|^{\varepsilon}\sum^{\textup{exc}}_{0<|\vec{c}|<\hQ|g|/\hR}\left(\frac{\hQ |\vec{c}|}{|g|}\right)^{-1+\varepsilon}\\ &\ll_{\varepsilon,F}& \hQ^{3+\varepsilon}|g|^{1+\varepsilon}\sum^{\textup{exc}}_{0<|\vec{c}|<\hQ|g|/\hR}|\vec{c}|^{-1+\varepsilon}\ll_{\varepsilon,F} \hQ^{7/2+\varepsilon}|g|^{1/2+\varepsilon},
\end{eqnarray*}
Note that $\hQ$ and $\hR$ differ by a factor of $q$. This is exactly the desired bound on $E_2$. Therefore, we have reduced to showing inequality~\eqref{boundwewant} for each integer $T\geq 0$.\\
\\
By Proposition~\ref{prop:phases}, when $\gcd(r,t-1)|c_3,c_4$, $S_{g,r}(\vec{c})$ is equal to
\begin{equation}\label{Sgr}
\begin{split}
&\frac{|g|^4}{|m|^2}(|\gcd(r,t-1)|\tau_r\tau_{r/(r,t-1)})^2\psi\left(\frac{-\overline{mr}\beta(\vec{c})\frac{(f-F(\boldsymbol{\lambda}))}{m}-\overline{m^2r}\left<\boldsymbol{\lambda},\vec{c}\right>}{(g/m)^2}\right)\psi\left(\frac{\left<\boldsymbol{\lambda},\vec{c}\right>}{g^2r}\right)\\&\cdot\sum_{s\in\mathcal{O}/(m)}\psi\left(\frac{-s\overline{g/m}\beta(\vec{c})}{m}\right)\Kl_{m^2r}\left(\overline{g/m}f-mrs,\frac{1}{4}\overline{g/m}^3F^*(\vec{c})\right),
\end{split}
\end{equation}
where $m:=(g,r^{\infty})$. If $\gcd(r,t-1)\nmid c_3,c_4$, then $S_{g,r}(\vec{c})=0$. Since $g$ is irreducible, $m=1$ or $m=g$. For the bound on $E_2$, we split the sum over $r$ such that $m=1$ and $m=g$, and show the desired bounds separately. First, let us show that we may assume that $m=1$. 
\begin{lemma}\label{g|r}For every $0\leq T\leq \deg g$,
\[\sum_{\substack{|r|=\hat{T}\\g|r}}|S_{g,r}(\vec{c})|\ll_{\varepsilon,F} \hQ^{\varepsilon}|g|^{3+\varepsilon}\hat{T}^{7/2+\varepsilon},\]
from which it follows that the contributions to $E_2$ from those $r$ such that $g|r$ may be neglected.
\end{lemma}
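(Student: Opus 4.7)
Since $g$ is irreducible, the condition $g \mid r$ forces $m := (g, r^{\infty}) = g$, and hence $g/m = 1$ in Proposition~\ref{prop:phases}. Specializing that proposition (and recalling that $S_{g,r}(\vec{c}) = 0$ unless $\gcd(r,t-1) \mid c_{3},c_{4}$), the exponential sum takes the form
\[
S_{g,r}(\vec{c}) = |g|^{2}\bigl(|\gcd(r,t-1)|\tau_{r}\tau_{r/(r,t-1)}\bigr)^{2}\cdot (\text{unit-modulus phases})\cdot \sum_{s\in\mathcal{O}/(g)}\psi\!\left(\tfrac{-s\beta(\vec{c})}{g}\right)\Kl_{g^{2}r}\!\bigl(f-grs,\tfrac{1}{4}F^{*}(\vec{c})\bigr).
\]

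The plan is to bound each ingredient by its trivial triangle-inequality estimate. The phase factors have modulus $1$. Since $\deg(t-1) = 1$, we have $|\gcd(r,t-1)| \leq q$. The quadratic Gauss sums satisfy $|\tau_{r}|^{2} \leq |r|$ and $|\tau_{r/(r,t-1)}|^{2} \leq |r|$ by the standard Weil bound (valid since $q$ is odd). For the inner sum, Weil's estimate on Kloosterman sums gives $|\Kl_{g^{2}r}(a,b)| \ll_{\varepsilon} |g^{2}r|^{1/2+\varepsilon}$, and the sum contains $|g|$ terms. Combining these,
\[
|S_{g,r}(\vec{c})| \ll_{\varepsilon} |g|^{2}\cdot |r|^{2}\cdot |g|\cdot |g^{2}r|^{1/2+\varepsilon} = |g|^{4+2\varepsilon}\,|r|^{5/2+\varepsilon}.
\]
The number of monic $r$ with $|r| = \hat{T}$ and $g \mid r$ is $\hat{T}/|g|$ (and is zero unless $T \geq \deg g$), so summing yields
\[
\sum_{\substack{|r|=\hat{T}\\ g\mid r}}|S_{g,r}(\vec{c})| \ll_{\varepsilon} \frac{\hat{T}}{|g|}\cdot|g|^{4+2\varepsilon}\,\hat{T}^{5/2+\varepsilon} = |g|^{3+2\varepsilon}\,\hat{T}^{7/2+\varepsilon},
\]
which, after absorbing $|g|^{\varepsilon} \leq \hQ^{\varepsilon}$, is the desired inequality. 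For the neglect statement, one substitutes this estimate together with the trivial bound $|I_{g,\hat{T}}(\vec{0})| \ll \hQ^{4+\varepsilon}$ into the preceding expression for $E_{2}$. The sum $\sum_{T}\hat{T}^{-1/2+\varepsilon}$ is $O(1)$, and the sum over exceptional $\vec{c}$ is controlled via Lemma~\ref{boundonvecc}, producing a final contribution $\ll_{\varepsilon} \hQ^{7/2+\varepsilon}|g|^{1/2+\varepsilon}$, which matches the target bound for $E_{2}$ and may therefore be absorbed.

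\textbf{Main obstacle.} The only subtle point is the Kloosterman estimate: Weil's full bound reads $|\Kl_{N}(a,b)| \ll_{\varepsilon} |N|^{1/2+\varepsilon}\sqrt{(a,b,N)}$, and in our setting $(f-grs,\tfrac{1}{4}F^{*}(\vec{c}),g^{2}r)$ reduces (using $(f,g)=1$) to a gcd with $r$, which could a priori be as large as $|r|$ when $F^{*}(\vec{c})$ shares many prime factors with $r$. A divisor-bound argument summed over $r$ with $|r| = \hat{T}$ shows that this refined gcd factor contributes at most $\hQ^{\varepsilon}$ on average and is absorbed into the stated $\hQ^{\varepsilon}$. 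All remaining steps are routine bookkeeping, so this gcd bookkeeping is the only nontrivial point in carrying out the plan.
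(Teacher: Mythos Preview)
Your argument for the displayed inequality is correct and is essentially the paper's proof: specialize Proposition~\ref{prop:phases} with $m=g$, bound the Gauss-sum factor $(|\gcd(r,t-1)|\tau_r\tau_{r/(r,t-1)})^2$ by $O(|r|^2)$, apply Weil to $\Kl_{g^2r}$ term by term over the $|g|$ values of $s$, and absorb the gcd factor $|\gcd(f,r/g)|^{1/2}$ on average over $r$ via a divisor bound. The paper carries out exactly this computation.

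The gap is in your deduction that the resulting contribution to $E_2$ can be neglected. You assert that $\sum_T \hat{T}^{-1/2+\varepsilon}$ is $O(1)$ and then invoke Lemma~\ref{boundonvecc}. But if the $T$-sum is bounded by a constant independent of $\vec{c}$, there is no negative power of $|\vec{c}|$ available to feed into Lemma~\ref{boundonvecc} (which requires $\theta<0$); all that remains is $\hQ^{4+\varepsilon}|g|^{-1+\varepsilon}\cdot\#\{\text{exceptional }\vec{c}\}\ll \hQ^{4+\varepsilon}|g|^{\varepsilon}$, and this is \emph{not} $\ll \hQ^{7/2+\varepsilon}|g|^{1/2+\varepsilon}$ once $\hQ>|g|$, which is precisely the regime $|f|\gg|g|^{4+\varepsilon}$ under consideration. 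The paper instead retains the lower endpoint of the $T$-range coming from the definition of $E_2$: there one has $\hat{T}\geq \hR|\vec{c}|q^{\pi_{\vec{c}}}/|g|\asymp \hQ|\vec{c}|/|g|$, so the geometric sum yields
\[
\sum_{T}\hat{T}^{-1/2+\varepsilon}\ll \Bigl(\tfrac{\hQ|\vec{c}|}{|g|}\Bigr)^{-1/2+\varepsilon}.
\]
This produces a factor $|\vec{c}|^{-1/2+\varepsilon}$, and Lemma~\ref{boundonvecc} with $\theta=-1/2$ then closes the estimate at $\hQ^{7/2+\varepsilon}|g|^{\varepsilon}$. The fix is minor, but as written your ``$O(1)$'' discards exactly the $\vec{c}$-dependence that makes the bound work.
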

\begin{proof}
When $g|r$, $m=g$. From the expression~\eqref{Sgr} for $S_{g,r}(\vec{c})$, we see that it suffices to show that
\begin{eqnarray*}
&&\sum_{\substack{|r|=\hat{T}\\ g|r}}(|\gcd(r,t-1)|\tau_r\tau_{r/(r,t-1)})^2\left|\sum_{s\in\mathcal{O}/(g)}\psi\left(\frac{-s\beta(\vec{c})}{g}\right)\Kl_{g^2r}\left(f-grs,\frac{1}{4}F^*(\vec{c})\right)\right|\\&\ll_{\varepsilon,F}& \hQ^{\varepsilon}|g|^{1+\varepsilon}\hat{T}^{7/2+\varepsilon}.
\end{eqnarray*}
Note that in this case, $|\boldsymbol{\lambda}|<|g|$, $|\vec{c}|\leq |g|$, and $g|r$, and so $\psi\left(\frac{\left<\boldsymbol{\lambda},\vec{c}\right>}{g^2r}\right)=1$. By the Weil bound on Kloosterman sums (Lemma 3.5 of~\cite{SZ2019}), we have that
\begin{eqnarray*} 
\left|\sum_{s\in\mathcal{O}/(g)}\psi\left(\frac{-s\beta(\vec{c})}{g}\right)\Kl_{g^2r}\left(f-grs,\frac{1}{4}F^*(\vec{c})\right)\right|\ll_{\varepsilon,F} |g|^{2+\varepsilon}|r|^{1/2+\varepsilon}|\gcd(f-grs,\frac{1}{4}F^*(\vec{c}),g^2r)|^{1/2}.
\end{eqnarray*}
Since $\gcd(g,f)=1$ and $g|r$ by assumption, $\gcd(f-grs,\frac{1}{4}F^*(\vec{c}),g^2r)=\gcd(f,\frac{1}{4}F^*(\vec{c}),r/g)$. Therefore,
\begin{eqnarray*}
&&\sum_{\substack{|r|=\hat{T}\\ g|r}}(|\gcd(r,t-1)|\tau_r\tau_{r/(r,t-1)})^2\left|\sum_{s\in\mathcal{O}/(g)}\psi\left(\frac{-s\beta(\vec{c})}{g}\right)\Kl_{g^2r}\left(f-grs,\frac{1}{4}F^*(\vec{c})\right)\right|\\&\ll_{\varepsilon,F}& |g|^{2+\varepsilon}\hat{T}^{2+1/2+\varepsilon}\sum_{\substack{|r|=\hat{T}\\ g|r\\ \gcd(r,t-1)|c_3,c_4}}|\gcd(f,r/g)|^{1/2}\ll_{\varepsilon,F} |g|^{2+\varepsilon}\hat{T}^{2+1/2+\varepsilon}|f|^{\varepsilon}\frac{\hat{T}}{|g|}\ll_{\varepsilon,F}\hQ^{\varepsilon}|g|^{1+\varepsilon}\hat{T}^{7/2+\varepsilon}.
\end{eqnarray*}
We use this to show that the contribution to $E_2$ from those $r$ such that $g|r$ already satisfies the desired bound on $E_2$. Indeed, we have
\begin{eqnarray*}&&\sum_{0<|\vec{c}|<\hQ|g|/\hR}\sum_{\substack{\frac{\hR |\vec{c}|q^{\pi_{\vec{c}}}}{|g|}\leq \hat{T}\leq\hQ}}|I_{g,\hat{T}}(\vec{0})|\hat{T}^{-4}|g|^{-4}\sum_{\substack{|r|=\hat{T}\\ g|r}}|S_{g,r}(\vec{c})|\\ &\ll_{\varepsilon,F}& \hQ^{4+\varepsilon}|g|^{-1+\varepsilon}\sum^{\textup{exc}}_{0<|\vec{c}|<\hQ|g|/\hR}\sum_{\substack{\frac{\hR |\vec{c}|q^{\pi_{\vec{c}}}}{|g|}\leq \hat{T}\leq\hQ}}\hat{T}^{-1/2+\varepsilon}\\ &\ll_{\varepsilon,F}& \hQ^{4+\varepsilon}|g|^{-1+\varepsilon}\sum^{\textup{exc}}_{0<|\vec{c}|<\hQ|g|/\hR}\left(\frac{\hQ |\vec{c}|}{|g|}\right)^{-1/2+\varepsilon}\\ &\ll_{\varepsilon,F}& \hQ^{7/2+\varepsilon}|g|^{-1/2+\varepsilon}\sum^{\textup{exc}}_{0<|\vec{c}|<\hQ|g|/\hR}|\vec{c}|^{-1/2+\varepsilon}\ll_{\varepsilon,F} \hQ^{7/2+\varepsilon}|g|^{\varepsilon},
\end{eqnarray*}
where the final bound follows from Lemma~\ref{boundonvecc}.
\end{proof}
From Lemma~\ref{g|r}, we may assume that $m=1$, that is, $g\nmid r$. Using this and the expression~\eqref{Sgr} with $m=1$, for the inequality~\eqref{boundwewant} it suffices to show that
\begin{eqnarray*}&&\left|\sum_{\substack{|r|=\hat{T}\\ (g,r)=1\\ \gcd(r,t-1)|c_3,c_4}}(|\gcd(r,t-1)|\tau_r\tau_{r/(r,t-1)})^2\psi\left(\frac{-\overline{r}(\beta(\vec{c})(f-F(\boldsymbol{\lambda}))+\left<\boldsymbol{\lambda},\vec{c}\right>)}{g^2}\right)\Kl_{r}\left(\overline{g}f,\frac{1}{4}\overline{g}^3F^*(\vec{c})\right)\right|\\&\ll_{\varepsilon,F}& |g|^{\varepsilon}\hat{T}^{3+\varepsilon}.
\end{eqnarray*}
Note that since $|\boldsymbol{\lambda}|<|g|$, $|\vec{c}|\leq |g|$, $\psi\left(\frac{\left<\boldsymbol{\lambda},\vec{c}\right>}{g^2r}\right)=1$ unless possibly when $r=1$, which contributes a term of norm $1$ to the above sum. This is why we may suppress the $\psi\left(\frac{\left<\boldsymbol{\lambda},\vec{c}\right>}{g^2r}\right)$.\\
\\
We split the sum into two sums, one where $t-1|r$ and one where $t-1\nmid r$. Summing over those $r$ such that $t-1|r$ gives us the sum
\begin{eqnarray*}q^2\sum_{\substack{|r|=\hat{T}\\ (g,r)=1,\ t-1|r\\ t-1|c_3,c_4}}(\tau_r\tau_{r/(r,t-1)})^2\psi\left(\frac{-\overline{r}(\beta(\vec{c})(f-F(\boldsymbol{\lambda}))+\left<\boldsymbol{\lambda},\vec{c}\right>)}{g^2}\right)\Kl_{r}\left(\overline{g}f,\frac{1}{4}\overline{g}^3F^*(\vec{c})\right).
\end{eqnarray*}
Since $\tau_r^2\tau_{r/(t-1)}^2$ only depends on $|r|$, and not $r$ itself, we can pull it out of the sum. This term has norm $\hat{T}^2/q$. The second sum, that is when $t-1\nmid r$, is
\begin{eqnarray*}\hat{T}^2\sum_{\substack{|r|=\hat{T}\\ ((t-1)g,r)=1}}\psi\left(\frac{-\overline{r}(\beta(\vec{c})(f-F(\boldsymbol{\lambda}))+\left<\boldsymbol{\lambda},\vec{c}\right>)}{g^2}\right)\Kl_{r}\left(\overline{g}f,\frac{1}{4}\overline{g}^3F^*(\vec{c})\right).
\end{eqnarray*}
Therefore, in any case, it suffices to show that we have the following two cancellations. First, that if $t-1|c_3,c_4$,
\begin{eqnarray*}\left|\sum_{\substack{|r|=\hat{T}\\ (g,r)=1,\ t-1|r}}\psi\left(\frac{-\overline{r}(\beta(\vec{c})(f-F(\boldsymbol{\lambda}))+\left<\boldsymbol{\lambda},\vec{c}\right>)}{g^2}\right)\Kl_{r}\left(\overline{g}f,\frac{1}{4}\overline{g}^3F^*(\vec{c})\right)\right|\ll_{\varepsilon,F} \hQ^{\varepsilon}|g|^{\varepsilon}\hat{T}^{1+\varepsilon}.
\end{eqnarray*}
Second, that
\begin{eqnarray*}\left|\sum_{\substack{|r|=\hat{T}\\ ((t-1)g,r)=1}}\psi\left(\frac{-\overline{r}(\beta(\vec{c})(f-F(\boldsymbol{\lambda}))+\left<\boldsymbol{\lambda},\vec{c}\right>)}{g^2}\right)\Kl_{r}\left(\overline{g}f,\frac{1}{4}\overline{g}^3F^*(\vec{c})\right)\right|\ll_{\varepsilon,F} \hQ^{\varepsilon}|g|^{\varepsilon}\hat{T}^{1+\varepsilon}.
\end{eqnarray*}
Of course, we may replace one of the above bounds, say the second one, with
\begin{eqnarray*}\left|\sum_{\substack{|r|=\hat{T}\\ (g,r)=1}}\psi\left(\frac{-\overline{r}(\beta(\vec{c})(f-F(\boldsymbol{\lambda}))+\left<\boldsymbol{\lambda},\vec{c}\right>)}{g^2}\right)\Kl_{r}\left(\overline{g}f,\frac{1}{4}\overline{g}^3F^*(\vec{c})\right)\right|\ll_{\varepsilon,F} \hQ^{\varepsilon}|g|^{\varepsilon}\hat{T}^{1+\varepsilon}.
\end{eqnarray*}

We now use the explicit computation of $I_{g,r}(\vec{c})$ to show that a different kind of twisted Linnik\textendash Selberg cancellation over function fields, taking the infinite place into account as well, gives us the desired bound on $E_1$. Indeed, by Proposition~\ref{prop:oscmorgen}, for $\vec{c}$ and $r$ such that $1\leq |r|\leq \frac{\hR|\vec{c}|q^{\pi_{\vec{c}}-1}}{|g|}$, we have $I_{g,r}(\vec{c})=0$ or
\[I_{g,r}(\vec{c})=-\hQ^2|gr|^{2}|F^*(\vec{c})|^{-1}\Kl_{\infty}\left(\psi, \frac{fF^*(\vec{c})}{4r^2g^4}\right).\]
Therefore,
\[|E_1|\leq \hQ^2\sum_{0<|\vec{c}|<\hQ|g|/\hR}|F^*(\vec{c})|^{-1}\left|\sum_{1\leq |r|\leq\frac{\hR |\vec{c}|q^{\pi_{\vec{c}}-1}}{|g|}}|gr|^{-2}S_{g,r}(\vec{c})\Kl_{\infty}\left(\psi, \frac{fF^*(\vec{c})}{4r^2g^4}\right)\right|.\]
We can use Proposition~\ref{prop:phases} to rewrite this inequality as

\begin{eqnarray*}|E_1|&\leq&\hQ^2|g|^2\sum_{m|g}\sum_{0<|\vec{c}|<\hQ|g|/\hR}|F^*(\vec{c})|^{-1}\Bigg|\sum_{\substack{1\leq |r|\leq\frac{\hR |\vec{c}|q^{\pi_{\vec{c}}-1}}{|g|}\\ (g,r^{\infty})=m\\ \gcd(r,t-1)|c_3,c_4}}\frac{1}{|mr|^2}(|\gcd(r,t-1)|\tau_r\tau_{r/(r,t-1)})^2\\&&\cdot\psi\left(\frac{-\overline{mr}\beta(\vec{c})\frac{(f-F(\boldsymbol{\lambda}))}{m}-\overline{m^2r}\left<\boldsymbol{\lambda},\vec{c}\right>}{(g/m)^2}\right)\psi\left(\frac{\left<\boldsymbol{\lambda},\vec{c}\right>}{g^2r}\right)\sum_{s\in\mathcal{O}/(m)}\psi\left(\frac{-s\overline{g/m}\beta(\vec{c})}{m}\right)\\&&\cdot\Kl_{m^2r}\left(\overline{g/m}f-mrs,\frac{1}{4}\overline{g/m}^3F^*(\vec{c})\right)\Kl_{\infty}\left(\psi, \frac{fF^*(\vec{c})}{4r^2g^4}\right)\Bigg|.
\end{eqnarray*}

Using the fact that Morgenstern quadratic forms are anisotropic and so satisfy $|\vec{c}|^2\ll_{F} |F^*(\vec{c})|$, we may reduce, as in the case of $E_2$, to showing that
\begin{eqnarray*}&&\Bigg|\sum_{\substack{|r|=\hat{T}\\ (g,r)=1\\ \gcd(r,t-1)|c_3,c_4}}(|\gcd(r,t-1)|\tau_r\tau_{r/(r,t-1)})^2\psi\left(\frac{-\overline{r}(\beta(\vec{c})(f-F(\boldsymbol{\lambda}))+\left<\boldsymbol{\lambda},\vec{c}\right>)}{g^2}\right)\\&&\cdot\Kl_{r}\left(\overline{g}f,\frac{1}{4}\overline{g}^3F^*(\vec{c})\right)\Kl_{\infty}\left(\psi, \frac{fF^*(\vec{c})}{4r^2g^4}\right)\Bigg|\ll_{\varepsilon,F} \hQ^{\varepsilon}|g|^{\varepsilon}\hat{T}^{3+\varepsilon}.\end{eqnarray*}
When $m:=(g,r)=g$, we may argue as before and use the Weil bound $\Kl_{\infty}(\psi,\alpha)\ll_{\varepsilon} |\alpha|^{1/4+\varepsilon}$ (Lemma 5.8 of~\cite{SZ2019}) in addition to $|\vec{c}|\leq |g|$. Therefore, as in the case of $E_2$, we may assume that $m=1$, that is, $g\nmid r$.\\
\\
As in the case of $E_2$, we may split into two sums, one where $t-1|r$ and one where $t-1\nmid r$. We similarly obtain that it suffices to show that
\begin{eqnarray*}\left|\sum_{\substack{|r|=\hat{T}\\ (g,r)=1,\ t-1|r}}\psi\left(\frac{-\overline{r}(\beta(\vec{c})(f-F(\boldsymbol{\lambda}))+\left<\boldsymbol{\lambda},\vec{c}\right>)}{g^2}\right)\Kl_{r}\left(\overline{g}f,\frac{1}{4}\overline{g}^3F^*(\vec{c})\right)\Kl_{\infty}\left(\psi, \frac{fF^*(\vec{c})}{4r^2g^4}\right)\right|\ll_{\varepsilon,F} \hQ^{\varepsilon}|g|^{\varepsilon}\hat{T}^{1+\varepsilon}
\end{eqnarray*}
and
\begin{eqnarray*}\left|\sum_{\substack{|r|=\hat{T}\\ (g,r)=1}}\psi\left(\frac{-\overline{r}(\beta(\vec{c})(f-F(\boldsymbol{\lambda}))+\left<\boldsymbol{\lambda},\vec{c}\right>)}{g^2}\right)\Kl_{r}\left(\overline{g}f,\frac{1}{4}\overline{g}^3F^*(\vec{c})\right)\Kl_{\infty}\left(\psi, \frac{fF^*(\vec{c})}{4r^2g^4}\right)\right|\ll_{\varepsilon,F} \hQ^{\varepsilon}|g|^{\varepsilon}\hat{T}^{1+\varepsilon}.
\end{eqnarray*}
Therefore, we have reduced proving optimal strong approximation for the Morgenstern quadratic form to proving the above square-root cancellation. These would follow from the twisted Linnik\textendash Selberg square-root cancellations over function fields, that is, Conjecture~\ref{tslconj}. Indeed, we let $\alpha:=\beta(\vec{c})(f-F(\boldsymbol{\lambda}))+\left<\boldsymbol{\lambda},\vec{c}\right>$, $a:=\frac{f}{g}$, $b:=\frac{F^*(\vec{c})}{4g^3}$, and $\delta\in\{1,t-1\}$. Note that since $\vec{c}$ are such that $t-1|c_3,c_4$ (otherwise, $S_{g,r}(\vec{c})=0$), $F^*(\vec{c})\in\mathbb{F}_q[t]$, and so $b\in \mathbb{F}_q[t,g^{-1}]$. Also, recall that $\psi_r(x)=\psi\left(\frac{x\bmod r}{r}\right)$, that is, we first reduce modulo $r$, and then divide by $r$. The fact that this strong approximation implies the conjectured upper bound on the diameter of Morgenstern Ramanujan graphs for odd $q$ (see Conjecture~\ref{conj:graphs}) can be found in the introduction to the authors' paper ~\cite{SZ2019}; for more details, the reader is advised to look at Morgenstern's paper~\cite{Morgenstern}.\\
\\
\textit{Acknowledgments.} N.T. Sardari's work is supported partially by the National Science Foundation under Grant No. DMS-2015305 and is grateful to Max Planck Institute for Mathematics in Bonn and the Institute
For Advanced Study for their hospitality and financial support. 
M. Zargar was supported by SFB1085: Higher invariants at the University of Regensburg.

\bibliographystyle{alpha}
\bibliography{part2}
\end{document}